\newcommand{\mi}{\mathrm{i}}
\newcommand{\mj}{\mathrm{j}}
\newcommand{\mk}{\mathrm{k}}
\newcommand{\ROM}[1]{\mathrm{\uppercase\expandafter{\romannumeral#1}}}
\theoremstyle{definition}
\newtheorem{thm}{Theorem}[section]
\newtheorem{lem}{Lemma}[section]
\newtheorem{cor}{Corollary}[section]
\newtheorem{examp}{Example}[section]
\newtheorem{rem}{Remark}[section]
\newtheorem{prop}{Proposition}[section]
\newtheorem{prob}[thm]{Problem}
\newtheorem{conj}[thm]{Conjecture}
\newtheorem{ack}{Acknowledgements}   
\title[Isoparametric foliations, a problem of Eells-Lemaire and conjectures of Leung]{\textbf{Isoparametric foliations, a problem of Eells-Lemaire and conjectures of Leung}}
\author[C. Qian]{Chao Qian}\address{School of Mathematics and Statistics, Beijing Institute of Technology, Beijing 100081, P.R. China}
\email{6120150035@bit.edu.cn}
\author[Z. Z. Tang]{Zizhou Tang}\address{School of Mathematical Sciences, Laboratory of Mathematics and Complex Systems, Beijing Normal University, Beijing 100875, P.R.China}\email{zztang@bnu.edu.cn}
\thanks {The first author was partially supported by NSFC (No. 11401560, No. 11571339), and the second author(corresponding author) was partially supported by NSFC 
(No. 11331002).}
\thanks{}
\thanks{}
\subjclass[2010]{ 53C42, 53C43, 53C99}
\date{}
\keywords{isoparametric foliation, Clifford algebra, harmonic map, Leung's conjecture}
\begin{document}

\maketitle
\begin{abstract}
In this paper, two sequences of minimal isoparametric hypersurfaces are constructed via representations
of Clifford algebras. Based on these, we give estimates on eigenvalues of the Laplacian
of the focal submanifolds of isoparametric hypersurfaces in unit spheres. This
improves results of \cite{TY13} and \cite{TXY14}.

Eells and Lemaire \cite{EL83} posed a problem to characterize the compact Riemannian manifold $M$ for which
there is an eigenmap from $M$ to $S^n$. As another application of our constructions, the focal maps
give rise to many examples of eigenmaps from minimal isoparametric hypersurfaces to unit spheres.

Most importantly, by investigating the second fundamental forms of focal submanifolds of isoparametric hypersurfaces in unit spheres, we provide infinitely many counterexamples to two conjectures of Leung \cite{Le91} (posed in 1991) on minimal submanifolds in unit spheres. Notice that these conjectures of Leung have been proved in the case that the normal connection is flat \cite{HV01}.
\end{abstract}

\section{Introduction}
Let $N$ be a connected complete Riemannian manifold. A non-constant
smooth function $f$ on $N$ is called \emph{transnormal}, if there
exists a smooth function $b:\mathbb{R}\rightarrow\mathbb{R}$ such
that the gradient of $f$ satisfies $|\nabla f|^2=b(f)$.
Moreover, if there exists another function
$a:\mathbb{R}\rightarrow\mathbb{R}$ so that the Laplacian of $f$ satisfies $\triangle f=a(f)$, then $f$ is said to be
\emph{isoparametric}. Each regular level hypersurface of $f$ is then called an
\emph{isoparametric hypersurface}. It was proved by Wang (see \cite{Wa87})
that each singular level set is also a smooth
submanifold ( not necessarily connected ), the so-called \emph{focal submanifold}.
The whole family of isoparametric hypersurfaces together with the focal
submanifolds form a singular Riemannian foliation,
which is called the \emph{isoparametric foliation}. For recent study of isoparametric functions
on general Riemannian manifolds, especially on exotic spheres, see \cite{GT13} and \cite{QT15}.

E. Cartan was the first to give a systematic study on isoparametric hypersurfaces in real
space forms and proved that an isoparametric hypersurface is exactly
a hypersurface with constant principal curvatures in these cases.
For the spherical case (the most interesting and complicated case), Cartan obtained
the classification result under the assumption that the number of the distinct principal curvatures is at most $3$.
Later, H. F. M\"{u}nzner \cite{Mu80} extended widely Cartan's work.
To be precise, given an isoparametric hypersurface $M^n$ in $S^{n+1}(1)$, let $\xi$ be
a unit normal vector field along $M^n$ in $S^{n+1}(1)$, $g$ the number
of distinct principal curvatures of $M$, $\cot \theta_{\alpha}~
(\alpha=1,...,g;~ 0<\theta_1<\cdots<\theta_{g} <\pi)$ the principal
curvatures with respect to $\xi$ and $m_{\alpha}$ the multiplicity
of $\cot \theta_{\alpha}$. M\"{u}nzner proved that $m_{\alpha}=m_{\alpha+2}$
(indices mod $g$),
$\theta_{\alpha}=\theta_1+\frac{\alpha-1}{g}\pi$ $(\alpha = 1,...,
g)$, and there exists
a homogeneous polynomial $F: \mathbb{R}^{n+2}\rightarrow \mathbb{R}$ of degree $g$,
the so-called\emph{ Cartan-M\"{u}nzner polynomial}, satisfying
\begin{equation}\label{ab}
\left\{ \begin{array}{ll}
|\tilde{\nabla} F|^2= g^2r^{2g-2}, \nonumber\\
~~~~\tilde{\triangle} F~~=\frac{m_2-m_1}{2}g^2r^{g-2},\nonumber
\end{array}\right.
\end{equation}
where $r=|x|$, $m_1$ and $m_2$ are the two multiplicities, and $\tilde{\nabla}, \tilde{\triangle}$ are
Euclidean gradient and Laplacian, respectively. Moreover,
M\"{u}nzner obtained the remarkable result that
$g$ must be $1, 2, 3, 4$ or $6$. Since then,
the classification of isoparametric hypersurfaces with $g=4$ or $6$
in a unit sphere has been one of the most challenging problems in
differential geometry.

Recently, due to \cite{CCJ07}, \cite{Im08}, \cite{Ch11} and \cite{Ch13}, an
isoparametric hypersurface with $g=4$ in a unit sphere must be homogeneous
or OT-FKM type(see below) except for the case $(m_1,m_2)=(7,8)$. For $g=6$,
R. Miyaoka \cite{Mi13}, \cite{Mi16} completed
the classification by showing that isoparametric hypersurfaces
in this case are always homogeneous.

To prepare for our results, let us now recall the isoparametric hypersurfaces of
OT-FKM type(c.f. \cite{FKM81}). Given a symmetric Clifford system
$\{P_0,\cdots,P_m\}$ on $\mathbb{R}^{2l}$,
\emph{i.e.}, $P_0, ..., P_m$ are symmetric matrices
satisfying $P_{\alpha}P_{\beta}+P_{\beta}P_{\alpha}=2\delta_{\alpha\beta}I_{2l}$, Ferus, Karcher and
M\"{u}nzner defined a polynomial
$F:\mathbb{R}^{2l}\rightarrow \mathbb{R}$ by
$F(x) = |x|^4 - 2\displaystyle\sum_{\alpha = 0}^{m}{\langle
P_{\alpha}x,x\rangle^2}$. They verified that $f=F|_{S^{2l-1}(1)}$ is
an isoparametric function on $S^{2l-1}(1)$ and each level
hypersurface of $f$ has $4$ distinct constant
principal curvatures with $(m_1, m_2)=(m, l-m-1)$, provided
$m>0$ and $l-m-1> 0$, where $l = k\delta(m)$ $(k=1,2,3,...)$
and $\delta(m)$ is the dimension of an irreducible module of
the Clifford algebra $C_{m-1}$. As usual, for OT-FKM type, we
denote the two focal submanifolds by $M_+=f^{-1}(1)$ and
$M_-=f^{-1}(-1)$, which have codimension $m_1+1$ and $m_2+1$
in $S^{2l-1}(1)$, respectively.

In the first part of the paper, inspired by the OT-FKM construction, for a symmetric Clifford system
$\{P_0,\cdots,P_m\}$ on $\mathbb{R}^{2l}$ with the Euclidean metric $\langle\cdot, \cdot\rangle$, we define $M_i:=\{x\in S^{2l-1}(1)~
|~\langle P_0x, x\rangle=\langle P_1x, x\rangle
=\cdots=\langle P_ix, x\rangle=0\}$, and then we have a sequence
$$M_m=M_+\subset M_{m-1}\subset\cdots\subset M_0
\subset S^{2l-1}(1).$$
For $0 \leq i \leq m-1$, it is natural to define a function $f_i:M_i\rightarrow \mathbb{R}$~~by
~~$f_i(x)=\langle P_{i+1}x, x\rangle$ for $x\in M_i$(see also \cite{TY12'}).

Similarly, by defining $N_i:=\{x\in S^{2l-1}(1)~|~\langle P_0x, x\rangle^2+
\langle P_1x, x\rangle^2+\cdots+\langle P_ix, x\rangle^2=1\}$, we construct
another sequence
$$N_{1}\subset N_2\subset \cdots\subset N_m=M_-\subset S^{2l-1}(1).$$
And for $2 \leq i \leq m$, we define a function $g_i:N_i\rightarrow \mathbb{R}$
by $g_i(x)=\langle P_{i}x, x\rangle$ for $x\in N_i.$
Henceforth, we always regard $M_i$ and $N_i$ as Riemannian manifolds with the induced metric
in $S^{2l-1}(1)$.
One of the main results in this paper is now stated as the following theorem.
\begin{thm}\label{filtration}
Assume the notations as above.
\item[(1).] For $0 \leq i \leq m-1$, the function $f_i:M_i\rightarrow \mathbb{R}$ with
$\mathrm{Im}(f_i)=[-1,~1]$ is an isoparametric function satisfying $$|\nabla f_i|^2=4(1-f_i^2),~
\triangle f_i~~=-4(l-i-1)f_i.$$
For any $c\in (-1,~1)$, the regular level set $\mathcal{U}_c=f_i^{-1}(c)$ has $3$
distinct principal curvatures $-\sqrt{\frac{1-c}{1+c}}$, 0, $\sqrt{\frac{1+c}{1-c}}$ with
multiplicities $l-i-2$, $i+1$, and $l-i-2$ respectively,
w.r.t. the unit normal $\xi=\frac{\nabla f_i}{|\nabla f_i|}$.
For $c=\pm 1$, the two focal submanifolds $\mathcal{U}_{\pm 1}=f_i^{-1}(\pm 1)$ are both
isometric to $S^{l-1}(1)$ and are totally geodesic in $M_i$.

Particularly, we have a minimal isoparametric sequence $M_m\subset M_{m-1}\subset\cdots\subset M_0\subset
S^{2l-1}(1)$, i.e., each $M_{i+1}$ is a minimal isoparametric hypersurface in $M_{i}$
for $0 \leq i \leq m-1$. Moreover, $M_{i+j}$ is minimal in $M_i$.

\item[(2).] Similarly, for $2 \leq i \leq m$, the function $g_i:N_i\rightarrow \mathbb{R}$ with
$\mathrm{Im}(g_i)=[-1,~1]$ is an isoparametric function satisfying $$|\nabla g_i|^2=4(1-g_i^2),~
\triangle g_i~~=-4ig_i.$$
For any $c\in (-1,~1)$, the regular level set $\mathcal{V}_c=g_i^{-1}(c)$ has $3$
distinct principal curvatures $-\sqrt{\frac{1-c}{1+c}}$, 0, $\sqrt{\frac{1+c}{1-c}}$ with
multiplicities $i-1$, $l-i$, and $i-1$ respectively,
w.r.t. the unit normal $\eta=\frac{\nabla g_i}{|\nabla g_i|}$.
For $c=\pm 1$, the two focal submanifolds $\mathcal{V}_{\pm 1}=g_i^{-1}(\pm 1)$ are both
isometric to $S^{l-1}(1)$ and are totally geodesic in $N_i$.

In particular, we get another minimal isoparametric sequence $N_{1}\subset N_2\subset \cdots\subset
N_m\subset S^{2l-1}(1)$, i.e., each $N_{i-1}$ is a minimal isoparametric hypersurface
in $N_i$ for $2 \leq i \leq m$. Moreover, $N_{i}$ is minimal in $N_{i+j}$.
\end{thm}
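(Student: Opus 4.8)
The plan is to push everything through the Clifford relations $P_\alpha^2=I_{2l}$, $P_\alpha P_\beta=-P_\beta P_\alpha$ $(\alpha\neq\beta)$, which in particular give $\Tr{P_\alpha}=0$, $|P_\alpha x|=|x|$, $\langle P_\alpha x,P_\beta x\rangle=0$ for $\alpha\neq\beta$, and the fact that $P_\alpha$ interchanges the $(\pm1)$-eigenspaces $E_{\pm}(P_\beta)$ whenever $\alpha\neq\beta$. Since the Euclidean gradient of $F_\alpha(x):=\langle P_\alpha x,x\rangle$ is $2P_\alpha x$, on $S^{2l-1}(1)$ one has $\nabla^{S^{2l-1}}F_\alpha=2(P_\alpha x-F_\alpha x)$, which on $M_i$ (where $F_0=\dots=F_i=0$) reduces to $2P_\alpha x$ for $\alpha\leq i$; these being orthonormal up to scale, $M_i$ is a regularly embedded submanifold of codimension $i+1$ in $S^{2l-1}(1)$, with $T_xM_i=\{x,P_0x,\dots,P_ix\}^{\perp}$ and $N_xM_i=\mathrm{span}\{x,P_0x,\dots,P_ix\}$ (normal space in $\mathbb{R}^{2l}$). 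The same formula shows $\nabla^{S^{2l-1}}F_{i+1}=2(P_{i+1}x-f_ix)$ is already tangent to $M_i$, hence equals $\nabla^{M_i}f_i$, so $|\nabla f_i|^2=4(1-2f_i^2+f_i^2)=4(1-f_i^2)$. For the Laplacian I would use the submanifold identity $\triangle^{M_i}(F_{i+1}|_{M_i})=\widetilde{\triangle}F_{i+1}-\sum_p\mathrm{Hess}\,F_{i+1}(\nu_p,\nu_p)+\langle\widetilde{\nabla}F_{i+1},\vec{H}^{\,\mathbb{R}^{2l}}_{M_i}\rangle$ with orthonormal normal frame $\nu_p\in\{x,P_0x,\dots,P_ix\}$: here $\widetilde{\triangle}F_{i+1}=2\Tr{P_{i+1}}=0$, while $\mathrm{Hess}\,F_{i+1}=2P_{i+1}$ together with $P_\alpha P_{i+1}P_\alpha=-P_{i+1}$ $(\alpha\leq i)$ gives $\sum_p\mathrm{Hess}\,F_{i+1}(\nu_p,\nu_p)=2f_i+(i+1)(-2f_i)=-2if_i$, and since $\vec{H}^{\,\mathbb{R}^{2l}}_{M_i}=\vec{H}^{\,S^{2l-1}}_{M_i}-(2l-i-2)x$ with $\widetilde{\nabla}F_{i+1}=2P_{i+1}x\perp\mathrm{span}\{P_0x,\dots,P_ix\}$, the last term equals $-2(2l-i-2)f_i$; adding up yields $\triangle f_i=-4(l-i-1)f_i$. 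As both $|\nabla f_i|^2$ and $\triangle f_i$ are functions of $f_i$, and $f_i$ is nonconstant (take $x_\phi=\cos\phi\,u+\sin\phi\,v$ with $u$ a unit vector of $E_{+}(P_{i+1})$ and $v$ a unit vector of $E_{-}(P_{i+1})$ orthogonal to $\mathrm{span}\{P_0u,\dots,P_iu\}$, which exists because $l>i+1$; then $x_\phi\in M_i$ and $f_i(x_\phi)=\cos2\phi$, so $\mathrm{Im}(f_i)=[-1,1]$), $f_i$ is isoparametric.

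For $c=\pm1$, the equality case of Cauchy--Schwarz in $|\langle P_{i+1}x,x\rangle|=1$ forces $P_{i+1}x=\pm x$, so $\mathcal{U}_{\pm1}=E_{\pm}(P_{i+1})\cap S^{2l-1}(1)$, a round $S^{l-1}(1)$; being totally geodesic in $S^{2l-1}(1)$ and lying between $M_i$ and $S^{2l-1}(1)$, it is totally geodesic in $M_i$. For $c\in(-1,1)$ and $x\in\mathcal{U}_c$, the unit normal is $\xi=(P_{i+1}x-cx)/\sqrt{1-c^2}$ and $T_x\mathcal{U}_c=W:=\{x,P_0x,\dots,P_ix,\xi\}^{\perp}$; differentiating $\xi$ and projecting, the shape operator is $A_\xi=-\tfrac{1}{\sqrt{1-c^2}}(S-cI)$ on $W$, where $S:=\mathrm{proj}_W\circ P_{i+1}|_W$ is symmetric with spectrum in $[-1,1]$. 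Its $(\pm1)$-eigenspaces are $E_{\pm}(P_{i+1})\cap W$; writing $x=x_{+}+x_{-}$ with $x_{\pm}\in E_{\pm}(P_{i+1})$ (so $|x_{\pm}|^2=(1\pm c)/2$) and using the orthogonality of $\{x_{+},P_0x_{-},\dots,P_ix_{-}\}$ inside $E_{+}(P_{i+1})$ (and its mirror in $E_{-}(P_{i+1})$) one finds each has dimension $l-i-2$. To identify the remaining $(i+1)$-dimensional block of $S$, I would complete $\{x,P_0x,\dots,P_ix,\xi\}$ to an orthonormal basis of $\mathbb{R}^{2l}$ and compute $\Tr{S}=(i+1)c$ and $\Tr{S^2}=\dim W-(i+1)(1-c^2)$, using $\Tr{P_{i+1}}=0$ and $P_\alpha P_{i+1}P_\alpha=-P_{i+1}$; subtracting the $(\pm1)$-eigenvalue contributions leaves eigenvalues $\mu_1,\dots,\mu_{i+1}$ with $\sum\mu_j=(i+1)c$ and $\sum\mu_j^2=(i+1)c^2$, hence $\mu_j\equiv c$ by the equality case of Cauchy--Schwarz. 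This gives principal curvatures $-\sqrt{\tfrac{1-c}{1+c}},0,\sqrt{\tfrac{1+c}{1-c}}$ with multiplicities $l-i-2,i+1,l-i-2$ (and, as a cross-check, $\triangle f_i=\tfrac12 b'(f_i)-\sqrt{b(f_i)}\,H$ with $b(t)=4(1-t^2)$ and $H(c)=(l-i-2)\tfrac{2c}{\sqrt{1-c^2}}$ reproduces $\triangle f_i=-4(l-i-1)f_i$).

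Setting $c=0$ shows $M_{i+1}=f_i^{-1}(0)$ has principal curvatures $\{-1,0,1\}$ in $M_i$, so it is a minimal isoparametric hypersurface of $M_i$. More generally, to prove $M_{i+j}$ is minimal in $M_i$, I would note that at $x\in M_{i+j}$ the normal space of $M_{i+j}$ in $M_i$ is spanned by $P_{i+1}x,\dots,P_{i+j}x$, and for each $\gamma$ the shape operator satisfies $\Tr{A^{M_i}_{P_\gamma x}}=-\sum_a\langle P_\gamma e_a,e_a\rangle$; completing an orthonormal basis of $T_xM_{i+j}$ by $\{x,P_0x,\dots,P_{i+j}x\}$ and using $\Tr{P_\gamma}=0$, $P_\beta P_\gamma P_\beta=-P_\gamma$ $(\beta\neq\gamma)$ and $F_\gamma(x)=0$ gives $\sum_a\langle P_\gamma e_a,e_a\rangle=\Tr{P_\gamma}=0$; reading $M_{-1}=S^{2l-1}(1)$ covers also the minimality of each $M_k$ in the sphere.

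Part (2) runs in parallel once one observes that $N_i$ is exactly the focal submanifold $M_{-}$ of the OT-FKM family attached to the sub-Clifford system $\{P_0,\dots,P_i\}$ on $\mathbb{R}^{2l}$ --- equivalently $N_i=\bigcup_{a\in S^i}\big(E_{+}(\textstyle\sum_\alpha a_\alpha P_\alpha)\cap S^{2l-1}(1)\big)$, which follows from $(\sum_\alpha a_\alpha P_\alpha)^2=|a|^2 I_{2l}$ --- and that $N_{i-1}=g_i^{-1}(0)$. One then computes $\nabla^{N_i}g_i$, the shape operators of $\mathcal{V}_c=g_i^{-1}(c)$, and the relevant traces exactly as above, the only extra ingredient being a description of $T_xN_i$ via this eigenspace structure. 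I expect the principal-curvature count for the level sets to be the main obstacle: showing that the ``middle'' block of the shape operator is scalar is where the argument is least mechanical, and the second-moment identity together with Cauchy--Schwarz seems the cleanest route; everything else reduces to bookkeeping with the Clifford relations. A minor additional point is the connectedness of $M_i$ and $N_i$, needed to conclude $\mathrm{Im}(f_i)=\mathrm{Im}(g_i)=[-1,1]$ and $\mathcal{V}_c\neq\varnothing$, which can be obtained either from the explicit points above or from M\"unzner's topological results.
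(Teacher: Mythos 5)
Your part (1) is essentially correct, and for the hardest step it takes a genuinely different route from the paper. The paper (Lemma \ref{principal curvature}) identifies the zero-curvature block \emph{explicitly} as $T_0(\xi)=\{Q\xi\mid Q\in\mathbb{R}\Sigma(P_0,\dots,P_i)\}$ and closes the argument by a dimension count over three exhibited orthogonal subspaces; you instead compute $\Tr{S}=(i+1)c$ and $\Tr{S^2}=\dim W-(i+1)(1-c^2)$ for the compression $S=\mathrm{proj}_W\circ P_{i+1}|_W$ and force the remaining $i+1$ eigenvalues to equal $c$ by the equality case of Cauchy--Schwarz. Both work (I checked your traces), and your moment argument is arguably cleaner; what the paper's explicit description buys is reuse downstream --- the basis $\{Q\xi\}$ of $T_0(\xi)$ is exactly what feeds Proposition \ref{tangent map}(1)(c) and hence the volume-element and eigenvalue computations of Section 3. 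Two small points you should make explicit: (i) the identification of the full $(\pm1)$-eigenspaces of $S$ with $E_{\pm}(P_{i+1})\cap W$ needs the one-line remark that $|SX|\le|P_{i+1}X|=|X|$ with equality iff $P_{i+1}X\in W$, so a $(\pm1)$-eigenvector of $S$ is genuinely fixed by $P_{i+1}$ up to sign; without this the ``remaining $i+1$ eigenvalues'' could a priori include further $\pm1$'s and the Cauchy--Schwarz step would not close. (ii) Your Laplacian computation is fine and in fact matches the paper's almost verbatim.

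The genuine shortfall is part (2). Saying it ``runs in parallel'' hides the fact that the structure of $N_i$ is qualitatively different: its normal space in $\mathbb{R}^{2l}$ is \emph{not} of the form $\mathrm{Span}\{x,P_0x,\dots\}$ but is
$\{\varsigma\in E_-(\mathcal{P})\mid \varsigma\perp Qx,\ \forall Q\in\Sigma(P_0,\dots,P_i),\ \langle Q,\mathcal{P}\rangle=0\}\oplus\mathbb{R}x$ with $\mathcal{P}=\sum_\alpha\langle P_\alpha x,x\rangle P_\alpha$, which the paper imports from Ferus--Karcher--M\"unzner, and the Laplacian identity $\triangle g_i=-4ig_i$ is not rederived but quoted from Solomon. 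Your trace method for the middle block of $A_\eta$ would require evaluating $\sum_\nu\langle P_i\nu,\nu\rangle$ and $\sum_\nu|\mathrm{proj}(P_i\nu)|^2$ over an orthonormal basis of exactly this $(l-i+1)$-dimensional normal space, together with the exact count $\dim(E_{\pm}(P_i)\cap T_x\mathcal{V}_c)=i-1$; none of this is ``bookkeeping with the Clifford relations'' alone --- it is precisely the content of the paper's lengthy computations with $\mathcal{P}$ and $\mathcal{Q}$ in Lemma \ref{principal curvature}(2). Your identification $N_i=\bigcup_{a\in S^i}\bigl(E_+(\sum_\alpha a_\alpha P_\alpha)\cap S^{2l-1}(1)\bigr)$ is correct and is the right starting point, but as written part (2) is a plan, not a proof. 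The remaining items (Cauchy--Schwarz for $\mathcal{U}_{\pm1}$, minimality of $M_{i+j}$ in $M_i$, surjectivity of $f_i$ onto $[-1,1]$) are handled correctly and at least as carefully as in the paper.
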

As a consequence, we have
\begin{cor}\label{fibration}
Assume the notations as in Theorem \ref{filtration}.

(1). For $0 \leq i \leq m-1$, each $M_{i+1}$ fibers over $S^{l-1}$ with fiber
$S^{l-i-2}$.

(2). For $2 \leq i \leq m$, each $N_{i-1}$ fibers over $S^{l-1}$ with fiber $S^{i-1}$.
\end{cor}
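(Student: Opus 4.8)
The plan is to identify $M_{i+1}$ and $N_{i-1}$ as \emph{regular} level sets of the isoparametric functions of Theorem~\ref{filtration} and then to write down the focal map onto a focal submanifold as a sphere bundle. On $M_i$ we have $M_{i+1}=\{x\in M_i:\langle P_{i+1}x,x\rangle=0\}=f_i^{-1}(0)$ with $0\in(-1,1)$, so $M_{i+1}=\mathcal{U}_0$ is a regular level set; and on $N_i$, where $\sum_{j=0}^{i}\langle P_jx,x\rangle^2=1$, the condition $\langle P_ix,x\rangle=0$ is equivalent to $\sum_{j=0}^{i-1}\langle P_jx,x\rangle^2=1$, so $N_{i-1}=g_i^{-1}(0)=\mathcal{V}_0$ is a regular level set too. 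By Theorem~\ref{filtration} these are isoparametric hypersurfaces with three constant principal curvatures and with focal submanifolds isometric to $S^{l-1}(1)$.

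For part (1) I would make the focal map explicit. Since $P_{i+1}$ is symmetric with $P_{i+1}^2=I$ and $\mathrm{Tr}\,P_{i+1}=0$, it splits $\mathbb{R}^{2l}=E_+\oplus E_-$ into its $\pm1$-eigenspaces, each of dimension $l$, and because $P_0,\dots,P_i$ anticommute with $P_{i+1}$ they map $E_+$ into $E_-$; hence every unit vector in $E_+$ automatically lies in $M_i$, so $\mathcal{U}_{1}=f_i^{-1}(1)=S^{2l-1}(1)\cap E_+\cong S^{l-1}(1)$ (and likewise $\mathcal{U}_{-1}=S^{2l-1}(1)\cap E_-$). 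For $x\in M_{i+1}$, writing $x=x_++x_-$ with $x_\pm\in E_\pm$ gives $|x_+|^2-|x_-|^2=\langle P_{i+1}x,x\rangle=0$, so $|x_+|=|x_-|=\tfrac1{\sqrt2}$, and $\pi(x):=\sqrt2\,x_+$ is a smooth submersion $M_{i+1}\to\mathcal{U}_1$. Over $y\in\mathcal{U}_1$, expanding $\langle P_jx,x\rangle$ via the anticommutation relations and the symmetry of $P_j$ shows the defining equations $\langle P_jx,x\rangle=0$ $(j\le i)$ reduce to $\langle P_jy,x_-\rangle=0$; since $P_0y,\dots,P_iy$ form an orthonormal $(i+1)$-frame in $E_-$, the fiber $\pi^{-1}(y)$ is the sphere of radius $\tfrac1{\sqrt2}$ in the orthogonal complement of that frame inside $E_-$, i.e.\ a round $S^{l-i-2}$. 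Local triviality is immediate because $y\mapsto\mathrm{span}(P_0y,\dots,P_iy)^{\perp}\cap E_-$ is a smooth vector bundle over $\mathcal{U}_1\cong S^{l-1}$. Part (2) is the identical computation with $P_i$ in place of $P_{i+1}$, except that the fiber comes out as the sphere of radius $\tfrac1{\sqrt2}$ \emph{inside} $\mathrm{span}(P_0y,\dots,P_{i-1}y)$, a round $S^{i-1}$, in agreement with the multiplicity $i-1$ in Theorem~\ref{filtration}(2).

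A coordinate-free alternative is to invoke the standard focal-map (tube) construction for isoparametric foliations directly: once $M_{i+1}$ and $N_{i-1}$ are recognised as regular leaves, a regular leaf fibers over each of its focal submanifolds with fiber a metric sphere whose dimension is the codimension of that focal submanifold minus one, equivalently the multiplicity of the principal curvature tending to $\infty$ there; Theorem~\ref{filtration} records that multiplicity as $l-i-2$ (resp.\ $i-1$) and the focal submanifold as $S^{l-1}$, giving the corollary at once. In either approach the only real point requiring care is that the collapsing map is genuinely a fiber bundle and not merely a surjection with spherical fibers; the frame-bundle observation above — respectively the local triviality of a tube about the totally geodesic focal submanifold — is exactly what settles this, and it is the step I would watch most closely.
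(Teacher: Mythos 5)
Your proposal is correct and is essentially the paper's own argument: your map $\pi(x)=\sqrt2\,x_+=\tfrac1{\sqrt2}(x+P_{i+1}x)$ is exactly the focal map $\phi_{\pi/4}$ of Proposition~\ref{fiber}, and your fiber computations reproduce the paper's identification of $\phi_{\pi/4}^{-1}(y)$ and $\psi_{\pi/4}^{-1}(y)$ as round spheres $S^{l-i-2}(\tfrac1{\sqrt2})$ and $S^{i-1}(\tfrac1{\sqrt2})$ (your explicit local-triviality remark is a small improvement, as the paper leaves that implicit). The only gloss is in part (2), where the computation is not quite ``identical'': the defining condition on $N_{i-1}$ is $\sum_{j=0}^{i-1}\langle P_jx,x\rangle^2=1$ rather than the vanishing of each term, so concluding that $x_-$ lies \emph{inside} $\mathrm{span}(P_0y,\dots,P_{i-1}y)$ requires the equality case of Bessel's inequality for the orthonormal frame $P_0y,\dots,P_{i-1}y$ in $E_-(P_i)$.
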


\begin{rem}
For $i=m-1$, the first part of the above corollary gives a geometric interpretation of
Lemma $3$ in \cite{Wa88}.
\end{rem}
\begin{rem}
Very recently, using representations of Clifford algebras, M. Radeschi in \cite{Ra14} constructed indecomposable singular Riemannian foliations on round spheres, most of which are non-homogeneous.
\end{rem}
In next part, we will apply the above constructions of isoparametric functions to the estimates
of eigenvalues. Given an $n$-dimensional closed Riemannian manifold $M^n$, recall that the
Laplace-Beltrami operator acting on smooth functions on $M$ is an elliptic operator and has a
discrete spectrum
$\{0=\lambda_0(M)<\lambda_1(M)\leq \lambda_2(M)\leq\cdots\leq \lambda_k(M)\leq\cdots, k
\uparrow\infty\},$
with each eigenvalue counted with its multiplicity.
Following the way in \cite{TY13} and \cite{TXY14},
we acquire the following theorem on eigenvalue estimates, based on
isoparametric foliations constructed in Theorem \ref{filtration}.
\begin{thm}\label{eigenvalue}
Let $\{P_0,\cdots,P_m\}$ be a symmetric Clifford system on $\mathbb{R}^{2l}$.
\item[(1).] For the sequence $M_m\subset M_{m-1}\subset\cdots\subset M_0\subset
S^{2l-1}(1)$, the following inequalities hold

a). $\lambda_k(M_{i})\leq \frac{l-i-2}{l-i-3}\lambda_k(M_{i+1})$
provided that $0 \leq i\leq m-1$ and $l-i-3>0$;

b). $\lambda_k(M_{i+1})\leq 2\lambda_k(S^{l-1}(1))$ provided that $0 \leq i\leq m-1$.

\item[(2).] For the sequence $N_{1}\subset N_2\subset\cdots\subset
N_m\subset S^{2l-1}(1)$, the following inequalities hold

a). $\lambda_k(N_{i})\leq \frac{i-1}{i-2}\lambda_k(N_{i-1})$ provided that $3\leq i\leq m$;

b). $\lambda_k(N_{i-1})\leq 2\lambda_k(S^{l-1}(1))$ provided that $2\leq i\leq m$.
\end{thm}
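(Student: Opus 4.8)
The plan is to deduce Theorem \ref{eigenvalue} from Theorem \ref{filtration} by exploiting the structure of a minimal isoparametric hypersurface sitting inside its ambient manifold, together with a standard min-max comparison for the Rayleigh quotient. Recall the classical fact (see \cite{TY13}, \cite{TXY14}): if $M^{n-1}$ is a minimal hypersurface in a closed Riemannian manifold $W^n$, then pulling back eigenfunctions of $W$ and restricting them to $M$ yields test functions whose Rayleigh quotients are controlled; more usefully here, when $M = f^{-1}(c)$ is a regular level set of an isoparametric function, the isoparametric structure lets one compare $\lambda_k$ of a level set with $\lambda_k$ of the focal submanifolds and of neighbouring leaves. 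The two multiplicity data computed in Theorem \ref{filtration}, namely the principal curvatures $-\sqrt{(1-c)/(1+c)},\,0,\,\sqrt{(1+c)/(1-c)}$ with multiplicities $(l-i-2,\,i+1,\,l-i-2)$ for $\mathcal{U}_c\subset M_i$ (and the analogous $(i-1,\,l-i,\,i-1)$ for $\mathcal{V}_c\subset N_i$), are exactly what feed into the constant $\frac{l-i-2}{l-i-3}$ in part (1a) and $\frac{i-1}{i-2}$ in part (2a).

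For part (1a), I would argue as follows. Fix $0\le i\le m-1$ with $l-i-3>0$. The hypersurface $M_{i+1}=\mathcal{U}_0\subset M_i$ is minimal, and by Theorem \ref{filtration} the function $f_i$ restricted along the normal geodesics foliates a tube around $M_{i+1}$. Take a $\lambda_k(M_{i+1})$-eigenfunction $\varphi$ on $M_{i+1}$, extend it to $M_i$ by making it constant along the isoparametric normal geodesics emanating from $M_{i+1}$ (i.e. pull back via the nearest-point projection onto $M_{i+1}$, which is well-defined on the tube and extends by the focal-set structure to all of $M_i$), and use the resulting family together with an auxiliary function of $f_i$ alone as test functions. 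The coarea formula with respect to $f_i$, using $|\nabla f_i|^2 = 4(1-f_i^2)$ and the volume density of the leaves $\mathcal{U}_c$ — which from the principal curvature data is proportional to $(1+c)^{(l-i-2)/2}(1-c)^{(l-i-2)/2}$ up to the flat $(i+1)$-dimensional directions — converts the Rayleigh quotient on $M_i$ into the Rayleigh quotient of $\varphi$ on $M_{i+1}$ times an explicit ratio of Beta-type integrals, and that ratio is precisely $\frac{l-i-2}{l-i-3}$. Then invoking the variational (min-max) characterization of $\lambda_k$ over a $(k+1)$-dimensional space of such test functions gives $\lambda_k(M_i)\le \frac{l-i-2}{l-i-3}\lambda_k(M_{i+1})$. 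Part (2a) is identical with the roles of the multiplicities swapped, the relevant exponent being $(i-1)/2$, yielding $\frac{i-1}{i-2}$.

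For parts (1b) and (2b), I would use Corollary \ref{fibration} instead: $M_{i+1}$ fibers over $S^{l-1}(1)$ with totally geodesic fiber $S^{l-i-2}$, and in fact Theorem \ref{filtration} says the two focal submanifolds $\mathcal{U}_{\pm1}\subset M_i$ are each isometric to $S^{l-1}(1)$ and totally geodesic in $M_i$. The minimal hypersurface $M_{i+1}$ lies in $M_i$ between these two copies of $S^{l-1}(1)$, and $f_i$ restricted to $M_{i+1}$-tubes degenerates at $\mathcal U_{\pm1}$; pulling back a $\lambda_k(S^{l-1}(1))$-eigenfunction from either focal sphere via the (smooth) focal projection $M_{i+1}\to S^{l-1}(1)$ of Corollary \ref{fibration} and computing its Rayleigh quotient on $M_{i+1}$ gives the Dirichlet energy unchanged (the fibers are totally geodesic and the map is a Riemannian submersion with minimal fibers) while the $L^2$ norm picks up at worst the factor $2$ coming from the two-to-one nature of how $M_{i+1}$ covers $S^{l-1}$ through the $\pm$ focal maps — more precisely from the identity $\langle P_{i+1}x,x\rangle = \cos 2t$ along normal geodesics, the same mechanism as in \cite{TY13}. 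Again the min-max principle over the $(k+1)$-dimensional test space yields $\lambda_k(M_{i+1})\le 2\lambda_k(S^{l-1}(1))$, and (2b) follows the same way.

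The main obstacle I anticipate is bookkeeping the volume densities of the leaves $\mathcal U_c$ and $\mathcal V_c$ correctly — in particular verifying that the flat ($0$-curvature) directions of dimension $i+1$ (resp. $l-i$) contribute a constant factor that cancels in the Rayleigh quotient, so that only the two sphere-like factors of dimension $l-i-2$ (resp. $i-1$) enter the Beta integral and produce the stated ratios $\frac{l-i-2}{l-i-3}$ and $\frac{i-1}{i-2}$. One must also check that the extension-by-constant-along-normal-geodesics map is Lipschitz across the focal set so that it is a legitimate $H^1$ test function; this is where having the explicit three–principal-curvature picture from Theorem \ref{filtration} (rather than an abstract isoparametric function) is essential, since it identifies the focal data exactly as in the model case treated in \cite{TY13} and \cite{TXY14}, and the argument there transfers verbatim once the densities are pinned down.
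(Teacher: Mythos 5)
Your proposal reconstructs the right overall strategy for part (1a) — normal geodesics from the minimal leaf $M_{i+1}=\mathcal{U}_0$, the density $(\cos 2t)^{l-i-2}\,dt\,dM_{i+1}$, and the Beta-type ratio producing $\frac{l-i-2}{l-i-3}$ — but there is a genuine gap exactly where you flag an ``obstacle.'' Extending an eigenfunction $\varphi$ of $M_{i+1}$ to all of $M_i$ by keeping it constant along normal geodesics does \emph{not} produce a Lipschitz, or even continuous, function across the focal sets $\mathcal{U}_{\pm1}$: every point of a fiber $F_y=\phi_{\pi/4}^{-1}(y)$ maps to the single point $y\in\mathcal{U}_1$, so continuity of the extension at $y$ would force $\varphi$ to be constant on each fiber, which a generic eigenfunction is not. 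The paper avoids this entirely: it works on the tubular domain $\mathcal{U}(\varepsilon)=\bigcup_{|t|\le \pi/4-\varepsilon}\mathcal{U}_{\sin 2t}$, multiplies the pulled-back eigenfunction by a cutoff $\Psi_{2\varepsilon}(2t)$ vanishing near the boundary to obtain legitimate Dirichlet test functions, estimates the resulting Rayleigh quotient $I(\varepsilon)+II(\varepsilon)$, and only then sends $\varepsilon\to 0$ using the Chavel--Feldman/Ozawa theorem $\lim_{\varepsilon\to 0}\lambda^D_{k+1}(\mathcal{U}(\varepsilon))=\lambda_k(M_i)$. This limiting device (which uses $\dim M_i \ge \dim\mathcal{U}_{\pm1}+2$, guaranteed since $l-i-2\ge 2$) is the missing ingredient; without it the test functions you propose are not admissible, and ``the argument transfers verbatim'' does not hold.

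Part (1b) contains an outright error. You assert that $\phi_{\pi/4}:M_{i+1}\to\mathcal{U}_1$ is a Riemannian submersion with minimal fibers, hence Dirichlet energy is preserved, and that the factor $2$ enters through the $L^2$ norm ``from the two-to-one nature'' of the focal maps. Both claims are wrong, and the first explicitly contradicts Remark 2.1 of the paper. From Proposition \ref{tangent map}, $(\phi_{\pi/4})_*$ scales vectors in $T_{-1}(\xi)$ by $\sqrt{2}$, annihilates $T_1(\xi)$, and is an isometry on $T_0(\xi)$, so the horizontal differential is an anisotropic isomorphism with operator norm $\sqrt{2}$; this yields $|\nabla(\varphi\circ\phi_{\pi/4})|^2\le 2\,|\nabla\varphi|^2$, and it is the \emph{energy} that is inflated by at most $2$. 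The $L^2$ norm, computed by integrating over the totally geodesic fibers $S^{l-i-2}(1/\sqrt{2})$, picks up the same geometric constant $\frac{1}{(\sqrt{2})^{l-i-2}}\mathrm{Vol}(S^{l-i-2}(1/\sqrt{2}))$ in numerator and denominator alike, so it cancels and contributes nothing. Had your mechanism (energy preserved, $L^2$ doubled) been correct, it would actually give $\lambda_k(M_{i+1})\le\tfrac12\lambda_k(S^{l-1}(1))$, which is false. Also, $\phi_{\pi/4}$ and $\phi_{-\pi/4}$ are two distinct submersions onto two distinct focal spheres $\mathcal{U}_{\pm 1}$; neither is a two-to-one cover, so that heuristic does not apply.
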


In the third part, as an unexpected phenomenon, we find the relations between the focal maps
of isoparametric foliations constructed in Theorem \ref{filtration} and harmonic maps. To be more precise,
let $M$ and $N$ be closed Riemannian manifolds,
and $f$ a smooth map from $M$ to $N$. The energy functional $E(f)$ is defined by
$E(f)=\frac{1}{2}\int_M|df|^2dV_M.$
The map $f$ is called \emph{harmonic} if it is
a critical point of the energy functional $E$. We refer to \cite{EL78} and \cite{EL88}
for the background and development of this topic. For $N=S^n(1)$, a map $\varphi:M\rightarrow
S^n(1)$ is called an \emph{eigenmap} if the $\mathbb{R}^{n+1}$-components are
eigenfunctions of the Laplacian of $M$ and all have the same eigenvalue. In particular,
$\varphi$ is a harmonic map. In 1980, Eells and Lemaire ( See p. 70 of \cite{EL83} ) posed
the following

\begin{prob}
Characterize those compact $M$ for which there is an eigenmap $\varphi:M\rightarrow
S^n(1)$ with $\dim(M)\geq n$ ?
\end{prob}
In 1993, Eells and Ratto ( See p. 132 of \cite{ER93} ) emphasized again that it is
quite natural to study the eigenmaps to $S^n(1)$.
As another application of our constructions in Theorem \ref{filtration}, we prove that
\begin{thm}\label{eigenmap}Let $\{P_0,\cdots,P_m\}$ be a symmetric Clifford system
on $\mathbb{R}^{2l}$.

(1). For $0\leq i \leq m-1$, both of the focal maps $\phi_{\pm \frac{\pi}{4}}: M_{i+1} \rightarrow
\mathcal{U}_{\pm 1}\cong S^{l-1}(1)$ defined by
$$\phi_{\pm \frac{\pi}{4}}(x)=\frac{1}{\sqrt{2}}(x\pm P_{i+1}x),~x \in M_{i+1},$$
are submersive eigenmaps with the same eigenvalue $2l-i-3$.

(2). For $2\leq i \leq m$, both of the focal maps $\psi_{\pm \frac{\pi}{4}}:N_{i-1}\rightarrow
\mathcal{V}_{\pm 1}\cong S^{l-1}(1)$ defined by
$$\psi_{\pm \frac{\pi}{4}}(x)=\frac{1}{\sqrt{2}}(x\pm P_{i}x),~x \in N_{i},$$
are submersive eigenmaps with the same eigenvalue $l+i-2$.

\end{thm}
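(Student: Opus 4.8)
The plan is to reduce the whole statement to Takahashi's theorem on minimal submanifolds of spheres, after two preliminary identifications. Throughout I use only the Clifford relations ($P_\alpha$ symmetric, $P_\alpha^2=I_{2l}$, and $P_\alpha P_\beta=-P_\beta P_\alpha$ for $\alpha\neq\beta$): each $P_\alpha$ splits $\mathbb{R}^{2l}$ orthogonally into $\pm1$-eigenspaces $E_\pm(P_\alpha)$ of dimension $l$, and $P_\beta$ ($\beta\neq\alpha$) interchanges $E_+(P_\alpha)$ and $E_-(P_\alpha)$. First I would identify the target spheres: for $x\in S^{2l-1}(1)$ one has $\langle P_{i+1}x,x\rangle=\pm1$ precisely when $x\in E_\pm(P_{i+1})$, and for such $x$ and $\alpha\le i$ the vector $P_\alpha x$ lies in the opposite eigenspace, hence is orthogonal to $x$; this gives $\mathcal{U}_{\pm1}=E_\pm(P_{i+1})\cap S^{2l-1}(1)\cong S^{l-1}(1)$, and likewise $\mathcal{V}_{\pm1}=E_\pm(P_i)\cap S^{2l-1}(1)\cong S^{l-1}(1)$.

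Next I would check that the focal maps really land there and compute their components. Since $\langle P_{i+1}x,x\rangle=0$ on $M_{i+1}$, a direct computation gives $|\tfrac1{\sqrt2}(x\pm P_{i+1}x)|=1$ and $P_{i+1}\bigl(\tfrac1{\sqrt2}(x\pm P_{i+1}x)\bigr)=\pm\tfrac1{\sqrt2}(x\pm P_{i+1}x)$, so $\phi_{\pm\pi/4}(M_{i+1})\subseteq\mathcal{U}_{\pm1}$; using $\langle P_ix,x\rangle=0$ on $N_{i-1}=g_i^{-1}(0)$ the same computation gives $\psi_{\pm\pi/4}(N_{i-1})\subseteq\mathcal{V}_{\pm1}$. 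Picking an orthonormal basis $\{e_a\}$ of $E_+(P_{i+1})$ and using $P_{i+1}e_a=e_a$, the $a$-th component of $\phi_{\pi/4}$ equals $\langle\tfrac1{\sqrt2}(x+P_{i+1}x),e_a\rangle=\sqrt2\,\langle x,e_a\rangle$, i.e.\ $\sqrt2$ times the restriction to $M_{i+1}$ of an ambient linear coordinate function on $\mathbb{R}^{2l}$; the same holds for $\phi_{-\pi/4}$ and for $\psi_{\pm\pi/4}$. Then I would note that $\{P_0,\dots,P_{i+1}\}$ is again a symmetric Clifford system on $\mathbb{R}^{2l}$ whose associated OT-FKM focal submanifold $M_+$ is exactly $M_{i+1}$, and that $\{P_0,\dots,P_{i-1}\}$ has $M_-=N_{i-1}$; since focal submanifolds of isoparametric hypersurfaces in unit spheres are minimal (M\"unzner \cite{Mu80}, \cite{FKM81}), both $M_{i+1}$ and $N_{i-1}$ are minimal submanifolds of $S^{2l-1}(1)$ (this can also be extracted from Theorem \ref{filtration} once one knows $M_0$, resp.\ $N_i$, is minimal in $S^{2l-1}(1)$).

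With these in hand, Takahashi's theorem says that the restriction to a minimal submanifold $M^k\subset S^N(1)\subset\mathbb{R}^{N+1}$ of any linear function on $\mathbb{R}^{N+1}$ is an eigenfunction of $\triangle_M$ with eigenvalue $k=\dim M$. Hence the components of $\phi_{\pm\pi/4}$ are eigenfunctions of $\triangle_{M_{i+1}}$ with the common eigenvalue $\dim M_{i+1}$, and by the multiplicities in Theorem \ref{filtration}(1), $\dim M_{i+1}=(l-i-2)+(i+1)+(l-i-2)=2l-i-3$; likewise $\dim N_{i-1}=(i-1)+(l-i)+(i-1)=l+i-2$, which yields the stated eigenvalues. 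For submersivity, $d\phi_{\pi/4}|_x=\sqrt2\,\pi_+|_{T_xM_{i+1}}$ with $\pi_+$ the orthogonal projection onto $E_+(P_{i+1})$, so $\ker d\phi_{\pi/4}|_x=T_xM_{i+1}\cap E_-(P_{i+1})$; writing $x=x_++x_-$ with $|x_\pm|^2=\tfrac12$, this intersection is cut out inside $E_-(P_{i+1})\cong\mathbb{R}^l$ by orthogonality to the $i+2$ vectors $x_-,P_0x_+,\dots,P_ix_+$, which are mutually orthogonal and nonzero (using $\langle x_-,P_\alpha x_+\rangle=\tfrac12\langle P_\alpha x,x\rangle=0$ since $x\in M_i$, and the skew-symmetry of $P_\alpha P_\beta$ for $\alpha\neq\beta$), so the kernel has dimension $l-i-2$ and the rank is $l-1=\dim\mathcal{U}_{\pm1}$; alternatively one invokes the fibration of Corollary \ref{fibration}. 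The arguments for $\psi_{\pm\pi/4}$ are identical with $P_i$ in place of $P_{i+1}$.

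The only point I expect to be non-formal is the minimality claim: that $M_{i+1}$ and $N_{i-1}$ are minimal not merely in $M_0$ (resp.\ $N_i$) but in the full sphere $S^{2l-1}(1)$. Recognizing them as the focal submanifolds $M_\pm$ attached to the truncated Clifford systems $\{P_0,\dots,P_{i+1}\}$ and $\{P_0,\dots,P_{i-1}\}$ makes this transparent, and once it is available the rest is a short Clifford-algebra computation together with the multiplicity data already recorded in Theorem \ref{filtration}.
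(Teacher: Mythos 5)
Your proposal is correct and follows essentially the same route as the paper: the paper likewise identifies $\mathcal{U}_{\pm1}=SE_{\pm}(P_{i+1})$ (Lemma \ref{focal}), observes that the composition with the inclusion into $E_{\pm}(P_{i+1})\cong\mathbb{R}^l$ is linear in $x$, and applies Takahashi's theorem to the minimal submanifold $M_{i+1}\subset S^{2l-1}(1)$ to get the eigenvalue $\dim M_{i+1}=2l-i-3$ (similarly $\dim N_{i-1}=l+i-2$), citing Proposition \ref{tangent map} for submersivity where you compute the kernel by hand. The differences are only in packaging, so no further comment is needed.
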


Meanwhile, the case for isoparametric foliations on unit spheres is also considered.
Given an isoparametric hypersurface $M^n$ (not necessarily minimal) in $S^{n+1}(1)$ and a smooth field
$\xi$ of unit normals to $M$, for each $x\in M$ and $\theta\in \mathbb{R}$, one has a map
$\varphi_{\theta}: M^n\rightarrow S^{n+1}(1)$ by
$$\varphi_{\theta}(x)=\cos \theta~ x +\sin \theta~ \xi(x).$$
If $\theta\neq \theta_{\alpha}$ for any $\alpha=1,...,g$,
$\varphi_{\theta}$ is a parallel hypersurface to $M$. If $\theta= \theta_{\alpha}$ for some
$\alpha=1,...,g$, i.e., $\cot \theta=\cot \theta_{\alpha}$ is a principal curvature
of $M$, $\varphi_{\theta}$ is not an immersion,
actually a focal submanifold of codimension
$m_{\alpha}+1$ in $S^{n+1}(1)$. And the map $\varphi_{\theta}$
from $M$ to a focal submanifold is said to be a \emph{focal map}.
M\"{u}nzner \cite{Mu80} asserted there are only two distinct
focal submanifolds, and every isoparametric hypersurface is a tube of constant
radius over each focal submanifold. Denote by $M_+$ and $M_-$ the focal submanifolds in
$S^{n+1}(1)$ with codimension $m_1+1$ and $m_2+1$, respectively. However, there are more than two focal maps.
\begin{prop}\label{focal map}
Let $M$ be a closed isoparametric hypersurface in a unit sphere. Then every
focal map from $M$ to its focal submanifolds $M_{+}$ or $M_-$ is harmonic.
\end{prop}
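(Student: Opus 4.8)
The plan is to compute the tension field of the focal map $\varphi_\theta\colon M\to M_\pm$ directly, using the ambient geometry of $S^{n+1}(1)\subset\mathbb{R}^{n+2}$, and show it vanishes. First I would observe that a smooth map $\psi\colon M\to S^N(1)\subset\mathbb{R}^{N+1}$ is harmonic if and only if, writing $\psi=(\psi^1,\dots,\psi^{N+1})$ in ambient coordinates, one has $\triangle_M \psi^a = -|d\psi|^2\,\psi^a$ for each $a$; equivalently $\triangle_M\psi$ is everywhere normal to $S^N(1)$, i.e.\ parallel to $\psi$. So the task reduces to: for $\theta=\theta_\alpha$ a principal curvature angle, compute the $M$-Laplacian of the $\mathbb{R}^{n+2}$-valued function $x\mapsto \cos\theta\, x+\sin\theta\,\xi(x)$ and check it is a pointwise multiple of $\varphi_\theta(x)$.

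The key computational steps: (1) Fix $x\in M$ and a local orthonormal frame $e_1,\dots,e_n$ of $T_xM$ consisting of principal directions, with $A_\xi e_j=\kappa_j e_j$ where $\kappa_j=\cot\theta_{\alpha(j)}$. (2) Recall that the principal curvatures and their multiplicities are constant along $M$ (M\"unzner's structure theory, available from the introduction), so the shape operator has constant eigenvalues; moreover the distributions given by the principal curvatures are the key to controlling second derivatives of $\xi$. (3) Compute $\widetilde\nabla_{e_j}\widetilde\nabla_{e_j}\varphi_\theta$ in $\mathbb{R}^{n+2}$ using the Gauss and Weingarten formulas for $M\subset S^{n+1}(1)\subset\mathbb{R}^{n+2}$: differentiating $x$ gives $e_j$, differentiating $\xi$ gives $-A_\xi e_j$ (tangent part) plus a piece along $x$; then differentiate again, collecting the $x$-component, the $\xi$-component, and the tangential component. (4) The tangential component of $\triangle_M\varphi_\theta$ must be shown to vanish: this is where one uses $\cot\theta=\kappa_j$ for the relevant block, so that $\cos\theta\,e_j + \sin\theta\,(-\kappa_j e_j)=\cos\theta\,e_j-\sin\theta\cot\theta_\alpha\,e_j$ — wait, more precisely the coefficient $\cos\theta - \sin\theta\,\kappa_j$ vanishes exactly when $\cot\theta=\kappa_j$, killing the directions tangent to the focal submanifold, while the directions normal to the focal submanifold (those with $\kappa_j\neq\cot\theta_\alpha$) contribute only to the normal part of $\triangle_M\varphi_\theta$ after summing. (5) The Codazzi equation, together with constancy of principal curvatures, ensures the remaining terms involving $\nabla_{e_j}e_j$ and derivatives of $A_\xi$ organize into something normal to $\varphi_\theta(x)$ in $S^{n+1}$, i.e.\ a combination of $\varphi_\theta(x)$ and the focal normal space, and one checks the focal-normal part also cancels.

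I expect the main obstacle to be Step (4)–(5): correctly bookkeeping the tangential terms of the second covariant derivative, in particular the terms $\langle\nabla_{e_j}e_j, e_k\rangle$ coupling different principal-curvature blocks, and showing these are annihilated either by the vanishing factor $\cos\theta-\sin\theta\,\kappa_j$ on the focal-tangent block or by a Codazzi-type symmetry. A clean way to sidestep some of the mess is to factor the focal map through the ambient picture: write $\varphi_\theta(x)=\cos\theta\,x+\sin\theta\,\xi(x)$, note $\xi = \widetilde\nabla F/|\widetilde\nabla F|$ up to sign along $M$ for the Cartan–M\"unzner polynomial $F$, and use the known expressions for $\widetilde\nabla F$ and $\widetilde\triangle F$ from \eqref{ab} — but since those were displayed with \texttt{nonumber}, I would instead lean on the frame computation. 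Alternatively, and perhaps most efficiently, one can invoke Theorem~\ref{eigenmap}'s method in reverse: the restriction of each ambient linear coordinate to $M$ can be related to a known eigenfunction via the isoparametric structure, reducing harmonicity of $\varphi_\theta$ to the eigenfunction identities already in play; but the self-contained route through Gauss–Weingarten–Codazzi is the one I would write out, as it makes transparent why \emph{every} focal map (all admissible $\theta_\alpha$, both focal submanifolds) is harmonic, not just the two "canonical" ones.
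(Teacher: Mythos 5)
There is a genuine gap, and it sits in the harmonicity criterion you start from. You reduce the problem to showing that $\triangle_M\varphi_\theta$ is \emph{parallel to $\varphi_\theta(x)$}, invoking the fact that a map into $S^N(1)\subset\mathbb{R}^{N+1}$ is harmonic iff its ambient Laplacian is radial. But the target of the focal map is $M_\pm$, not $S^{n+1}(1)$, and $M_\pm$ is not totally geodesic in the sphere (its second fundamental form in $S^{n+1}(1)$ is non-trivial for $g\geq 2$). Harmonicity of $\varphi_\theta\colon M\to M_\pm$ is therefore a different (and weaker) condition than harmonicity of $\varphi_\theta\colon M\to S^{n+1}(1)$: what you actually need, via the composition lemma the paper records as Lemma~\ref{tension field}, is that $\triangle_M(i\circ\varphi_\theta)$ be normal to $M_\pm$ in $\mathbb{R}^{n+2}$, not that it be radial. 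Concretely, using constant mean curvature and Codazzi one gets $\triangle_M x = -nx + H\xi$ and $\triangle_M\xi = Hx - |B|^2\xi$, so
\[
\triangle_M\varphi_\theta = (-n\cos\theta + H\sin\theta)\,x + (H\cos\theta - |B|^2\sin\theta)\,\xi,
\]
which lies in $\mathrm{Span}\{x,\xi\}$. This span is a two-dimensional subspace of the normal space of $M_\pm$ at $\varphi_\theta(x)$ in $\mathbb{R}^{n+2}$ (it contains both the radial direction $\varphi_\theta(x)$ and the spherical focal-normal direction $-\sin\theta\,x+\cos\theta\,\xi$), so the map is harmonic. However, it is generically \emph{not} a multiple of $\varphi_\theta(x)$: that would force $\sin\theta\cos\theta(|B|^2-n)=H\cos 2\theta$, which fails for general isoparametric $M$ and general focal angle $\theta_\alpha$.

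This is precisely where your Step (5) goes wrong: you set out to ``check the focal-normal part also cancels,'' but it does not cancel and does not need to. The rest of your plan — Gauss–Weingarten for $M\subset S^{n+1}\subset\mathbb{R}^{n+2}$, constancy of the principal curvatures (hence of $H$ and $|B|^2$), and Codazzi to dispose of the $\sum_j(\nabla_{e_j}A_\xi)(e_j)$ term — is exactly the machinery the paper uses and is sound. If you replace ``parallel to $\varphi_\theta(x)$'' with ``normal to $M_\pm$'' and drop the attempted extra cancellation, the computation closes cleanly and gives the result for every focal angle and both focal submanifolds. The side remarks (routing through the Cartan–M\"unzner polynomial, or invoking Theorem~\ref{eigenmap} in reverse) do not salvage the flawed criterion; note also that Theorem~\ref{eigenmap} concerns the Clifford-constructed families $M_i, N_i$, not a general isoparametric hypersurface, so that reduction is not available here.
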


Furthermore,  we will investigate the stability of harmonic maps constructed in Theorem
\ref{eigenmap} and Proposition \ref{focal map} as well.

The last part of the paper will be concerned with the pinching problem for minimal submanifolds in unit
spheres. Let $W^n$ be a closed Riemannian manifold minimally immersed in
$S^{n+p}(1)$. Let $B$ be the second fundamental form
and define an extrinsic quantity $\sigma(W)=\max\{~|B(X,X)|^2~|~X\in TM,~|X|=1\}$.

In 1986, H. Gauchman \cite{Ga86} established a well known rigidity theorem which
states that if $\sigma (W)<1/3$, then the submanifold $W$ must be totally geodesic.
When the dimension $n$ of $W$ is even, the rigidity theorem above is optimal.
As presented in \cite{Ga86}, there exist minimal submanifolds
in unit spheres which are not totally geodesic, with $|B(X,X)|^2\equiv 1/3$ for any
unit tangent vector $X$.  When the dimension $n$ of $W$ is odd and $p>1$, the conclusion still holds
under a weaker assumption $\sigma(W)\leq \frac{1}{3-2/n}$. It is remarkable that Gauchman's rigidity theorem 
has been generalized to the case of submanifolds with parallel mean curvature in \cite{XFX06}.

In 1991, P. F. Leung \cite{Le91} proved that if $n$ is odd, a closed minimally
immersed submanifold $W^n$ with $\sigma(W)\leq \frac{n}{n-1}$ is totally geodesic
provided that the normal connection is flat. Based on this fact, he proposed the
following
\begin{conj}\label{weak}  If $n$ is odd, $W^n$ is minimally immersed in $S^{n+p}(1)$
with $\sigma(W) \leq\frac{n}{n-1}$, then $W$ is homeomorphic to $S^n$.
\end{conj}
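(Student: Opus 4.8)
The plan is to convert the pinching hypothesis on the second fundamental form $B$ into a pinching hypothesis on the sectional curvature of $W^n$, and then to invoke a topological sphere theorem; the parity of $n$ should enter in the same way it enters Leung's flat-normal-connection theorem, through a sharper algebraic inequality for trace-free symmetric bilinear forms on an odd-dimensional space.

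First I would write the Gauss equation for the minimal immersion $W^n\hookrightarrow S^{n+p}(1)$: for any orthonormal pair $\{X,Y\}$ of tangent vectors at a point,
\[
K(X,Y)=1+\langle B(X,X),B(Y,Y)\rangle-|B(X,Y)|^2 .
\]
The upper bound is immediate, since $\langle B(X,X),B(Y,Y)\rangle\le|B(X,X)|\,|B(Y,Y)|\le\sigma(W)$ and $|B(X,Y)|^2\ge0$ give $K(X,Y)\le 1+\sigma(W)\le\frac{2n-1}{n-1}$. The heart of the matter is a matching lower bound: the crude Cauchy--Schwarz estimate only yields $K(X,Y)\ge 1-2\sigma(W)$, which is already negative at the threshold, so one must exploit minimality, i.e. $\sum_i B(e_i,e_i)=0$. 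Applying $B$ to the unit vectors $\tfrac{1}{\sqrt2}(X\pm Y)$, combining the two polarization identities with the trace-free condition—precisely the algebra underlying Gauchman's and Leung's rigidity computations—and using that $n$ is odd, the goal is to establish the strict pointwise inequality
\[
K(X,Y)>\tfrac14\bigl(1+\sigma(W)\bigr)\ \ge\ \tfrac14\,K_{\max}(W),
\]
so that $W$ is strictly $\tfrac14$-pinched with positive sectional curvature.

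Granting the strict quarter-pinching, I would finish as follows. Positive curvature and Bonnet--Myers make $\pi_1(W)$ finite, so the Riemannian universal cover $\widetilde W$ is compact and inherits the strict quarter-pinching; by the Berger--Klingenberg topological sphere theorem $\widetilde W$ is homeomorphic to $S^n$, hence $W=\widetilde W/\Gamma$ with $\Gamma$ finite and acting freely on a homotopy $n$-sphere. Since $n$ is odd, $W$ is orientable by Synge's theorem. To conclude $\Gamma$ is trivial one cannot use the pinching alone—spherical space forms of constant curvature are $1$-pinched—so here the minimal-immersion hypothesis together with $\sigma(W)\le\frac{n}{n-1}$ must be reintroduced (for instance through a second-variation or Simons-type argument carried out on $W$ rather than on $\widetilde W$, in the spirit of Leung's original estimate), thereby ruling out a nontrivial deck group and yielding that $W$ is homeomorphic to $S^n$.

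The decisive step, and the one I expect to be the real obstacle, is the lower curvature estimate in the second paragraph. The threshold $\frac{n}{n-1}$ is large—it is $\frac32$ already for $n=3$ and decreases only to $1$ as $n\to\infty$—so it is far from clear that $\sigma(W)\le\frac{n}{n-1}$ actually buys strict quarter-pinching, even after one has extracted everything from the trace-free identities and the parity of $n$. If the algebraic inequality that this argument demands turns out to fail, the approach must be reworked or abandoned; this is exactly the point at which the conjecture is vulnerable, and where one should look either for a cleverer estimate or for potential counterexamples.
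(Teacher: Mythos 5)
This statement is not a theorem of the paper at all: it is Conjecture \ref{weak} (Leung's conjecture from 1991), which the paper \emph{refutes}. Theorem \ref{counter example} and the remark following it show that for an isoparametric hypersurface in $S^{n+1}(1)$ with $g=4$ and multiplicities $(m_1,m_2)$, the focal submanifold $M_+$ of dimension $m_1+2m_2$ is minimally immersed in the ambient unit sphere with $\sigma(M_+)=1\le\frac{\dim M_+}{\dim M_+-1}$, is odd-dimensional whenever $m_1$ is odd, and yet is not homeomorphic to a sphere (its cohomology ring, computed by M\"unzner, is not that of a sphere). So there are infinitely many counterexamples, and no proof of the statement can exist. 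Any attempt to compare your argument with ``the paper's proof'' must start from the fact that the paper proves the opposite.

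Accordingly, the decisive step you yourself flagged is exactly where the argument collapses, and it collapses irreparably rather than needing a ``cleverer estimate.'' The pinching hypothesis $\sigma(W)\le\frac{n}{n-1}$ does not imply strict quarter-pinching of the sectional curvature: the counterexamples above satisfy $\sigma=1$, but their second fundamental forms realize both $|B(X,X)|^2=1$ and $|B(X,X)|^2=0$ on unit tangent vectors (this is the content of the Peirce-decomposition computation in the proof of Theorem \ref{counter example}), and these focal submanifolds are nowhere near quarter-pinched --- indeed they cannot be, since they are not homotopy spheres. Your upper bound $K\le 1+\sigma$ is fine, but the proposed lower bound $K>\frac14(1+\sigma)$ is false in general, and minimality plus the parity of $n$ cannot rescue it. The final paragraph (ruling out a nontrivial deck group by ``reintroducing'' the hypothesis through an unspecified second-variation argument) is also only a placeholder, but that issue is moot given that the first, essential step already fails. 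If you want the parity of $n$ and flatness of the normal connection to do real work, you must keep the normal-connection hypothesis, as in Leung's own theorem and in the Hasanis--Vlachos result cited in the paper; without it the conjecture is simply wrong.
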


By investigating the second fundamental form of the Clifford minimal hypersurfaces
in unit spheres, Leung also posed the following stronger

\begin{conj}\label{strong}  If $n$ is odd and $W^n$ is minimally immersed in $S^{n+p}(1)$
with $\sigma(W) <\frac{n+1}{n-1}$, then $W$ is homeomorphic to $S^n$.
\end{conj}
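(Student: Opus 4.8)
The plan is to reduce Conjecture \ref{strong} to a curvature--pinching problem and then invoke a topological sphere theorem. For $W^n$ minimally immersed in $S^{n+p}(1)$ with second fundamental form $B$, the Gauss equation gives, for orthonormal $X,Y\in T_xW$,
$$K(X,Y)=1+\langle B(X,X),B(Y,Y)\rangle-|B(X,Y)|^2,$$
and summing over an orthonormal frame $\{e_1=X,\dots,e_n\}$ together with minimality ($\sum_i B(e_i,e_i)=0$) yields
$$\mathrm{Ric}(X,X)=(n-1)-\sum_{i=1}^n|B(X,e_i)|^2.$$
So the goal becomes to show that the hypothesis $\sigma(W)<\frac{n+1}{n-1}$ forces the sectional curvature of $W$ to be pointwise positive --- ideally pointwise strictly $\tfrac14$-pinched, or at least of positive isotropic type --- after which the differentiable sphere theorem of Brendle--Schoen (or, in the positive-isotropic-curvature case, the theorem of Micallef--Moore) would give that $W$ is homeomorphic, indeed diffeomorphic, to $S^n$. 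The numerology is consistent with such a statement: the most balanced Clifford minimal hypersurface $S^{(n-1)/2}\bigl(\sqrt{\tfrac{n-1}{2n}}\bigr)\times S^{(n+1)/2}\bigl(\sqrt{\tfrac{n+1}{2n}}\bigr)\subset S^{n+1}(1)$ realizes $\sigma=\frac{n+1}{n-1}$ and is not a homotopy sphere, so $\frac{n+1}{n-1}$ is exactly the threshold one must stay strictly below.

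First I would extract pointwise estimates on $B$ in the spirit of Gauchman and Leung. Fix a point $x$ and a unit $X\in T_xW$ at which the maximum $\sigma(W)$ is attained; applying $g(\theta):=|B(Z_\theta,Z_\theta)|^2\le\sigma$ with $Z_\theta=\cos\theta\,X+\sin\theta\,Y$ and using $g'(0)=0$, $g''(0)\le 0$ gives $\langle B(X,X),B(X,Y)\rangle=0$ and $2|B(X,Y)|^2+\langle B(X,X),B(Y,Y)\rangle\le\sigma$ for every unit $Y\perp X$. Feeding this back into the Gauss and Ricci identities, together with the elementary bound $|B(X,Y)|^2\le\sigma$ obtained by polarizing $g$, should produce lower bounds $K(X,Y)\ge 1-c(n)\,\sigma$ and $\mathrm{Ric}(X,X)\ge(n-1)\bigl(1-c'(n)\,\sigma\bigr)$ with explicit constants $c(n),c'(n)$. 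The delicate point is to run this not only along the distinguished direction $X$ but uniformly over all tangent $2$-planes at all points, presumably by combining the pointwise argument with a Simons-type identity for $|B|^2$ and an integral (Bochner) estimate, so as to sharpen the constants to the critical pinching $\tfrac14$ (or to positive isotropic curvature) precisely in the range $\sigma<\frac{n+1}{n-1}$; the oddness of $n$ should enter here, as in Leung's $\frac{n}{n-1}$ bound, through the parity-sensitive terms in these frame computations.

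With a pointwise pinching in hand, the remaining topological step is to promote the homotopy-theoretic conclusion of the sphere theorem to a homeomorphism, which requires controlling $\pi_1(W)$. Since $n$ is odd one cannot invoke Synge's theorem; instead I would try to use that positive isotropic curvature forces $\pi_k(W)=0$ for $2\le k\le n/2$ (Micallef--Moore), pass to the universal cover $\widetilde W$ --- a compact manifold of positive isotropic curvature, hence a homotopy sphere --- and then exclude a nontrivial deck group by a separate argument exploiting minimality and the bound on $\sigma$, after which the solved Poincar\'e conjecture finishes the job.

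\textbf{Main obstacle.} The crux is the very first implication: upgrading $\sigma(W)<\frac{n+1}{n-1}$ to positivity of the sectional (or isotropic) curvature in \emph{every} direction. The maximum-principle trick controls curvature only along the $\sigma$-maximizing direction, and off that direction the cross terms $\langle B(X,X),B(Y,Y)\rangle$ and $|B(X,Y)|^2$ in the Gauss equation are not pinned down tightly enough; no Simons-type averaging seems to recover the missing pointwise information. In fact this is exactly where the conjecture breaks: the focal submanifolds of isoparametric hypersurfaces produced in Theorem \ref{filtration} (and their relatives) can be shown to have $\sigma$ strictly below $\frac{n+1}{n-1}$ while still carrying $2$-planes of non-positive curvature and non-trivial topology. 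So the programme above cannot be completed; the honest outcome is that Conjecture \ref{strong} (and likewise Conjecture \ref{weak}) is false, the non-flatness of the normal connection of these examples being precisely what places them outside the affirmative flat-normal case of \cite{HV01}.
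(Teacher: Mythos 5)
You are right that Conjecture \ref{strong} cannot be proved, and your final paragraph lands on the paper's actual position: the conjecture is false, and the refutation is exactly Theorem \ref{counter example}. So the first two-thirds of your proposal (the pinching programme via Gauss--Ricci identities, Micallef--Moore, Brendle--Schoen) is a dead end that you yourself correctly abandon; the only part that matters is the last claim, and that is where the gap lies. You assert that the relevant focal submanifolds ``can be shown to have $\sigma$ strictly below $\frac{n+1}{n-1}$'' without any computation, and this is precisely the nontrivial content of the paper. The paper's argument runs as follows: for a $g=4$ isoparametric family, the Ozeki--Takeuchi expansion of the Cartan--M\"unzner polynomial $F$ at a point $x\in M_+$ gives, for a unit tangent vector $X\in T_xM_+$, the identity $|B(X,X)|^2=\frac{1-F(X)}{2}$; since $|F|\le 1$ on the unit sphere this immediately yields $\sigma(M_+)\le 1$, and equality is attained by exhibiting unit tangent vectors $X_1\in T_xM_+$ that actually lie on $M_-$ (so $F(X_1)=-1$), which is done via Immervoll's refinement of the Peirce decomposition of the isoparametric triple system. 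Thus $\sigma(M_\pm)=1$, and since $1<\frac{n+1}{n-1}$ (indeed $1\le\frac{n}{n-1}$) both hypotheses of Conjectures \ref{weak} and \ref{strong} are satisfied. None of this is visible in your sketch, and without it the counterexample claim is unsupported.

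Two further points you omit. First, the conjectures require $n$ odd, so one must check the parity: $\dim M_+=m_1+2m_2$, hence $M_+$ refutes the conjectures only when $m_1$ is odd (and symmetrically $M_-$ when $m_2$ is odd); this is why the paper's remark after Theorem \ref{counter example} is phrased conditionally. Second, ``non-trivial topology'' is not enough: the paper invokes M\"unzner's computation \cite{Mu80} of the cohomology ring of the focal submanifolds to conclude they are not homeomorphic to spheres. Finally, a small attribution error: the counterexamples are the focal submanifolds of arbitrary $g=4$ isoparametric hypersurfaces as in Theorem \ref{counter example} (including homogeneous, non--OT-FKM families), not the objects constructed in Theorem \ref{filtration}, which are foliations \emph{inside} the OT-FKM focal submanifolds. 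Your closing observation that non-flatness of the normal connection is what places these examples outside the affirmative case of \cite{HV01} is correct and agrees with the paper's remark.
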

For minimal submanifolds in unit spheres with flat normal connections,
Conjecture \ref{strong} was proved by T. Hasanis and T. Vlachos \cite{HV01}.
In fact, they showed that the condition $\mathrm{Ric}(W)$$ >\frac{n(n-3)}{n-1}$ is equivalent
to the inequality $\sigma(W)< \frac{n+1}{n-1}$. Thus in
the case that the normal connection is flat, Conjecture \ref{strong} follows
from Theorem B in \cite{HV01}.

Recall that the examples with even dimensions and $\sigma(W)= 1/3$ given
in \cite{Ga86} originated from the Veronese embeddings of the projective
planes $\mathbb{R}P^2$, $\mathbb{C}P^2$, $\mathbb{H}P^2$ and $\mathbb{O}P^2$ in $S^4(1)$, $S^7(1)$, $S^{13}(1)$ and
$S^{25}(1)$, respectively. Observe that those Veronese submanifolds are
just the focal submanifolds of isoparametric hypersurfaces
in unit spheres with $g=3$. Hence, it is very natural for us to consider
the case with $g=4$.
\begin{thm}\label{counter example}
Let $M^n$ be an isoparametric hypersurface in $S^{n+1}(1)$ with $g=4$ and
multiplicities $(m_1, m_2)$,  and denote by $M_+$ and $M_-$ the focal
submanifolds of $M^n$ in $S^{n+1}(1)$ with dimension $m_1+2m_2$ and $2m_1+m_2$
respectively. Then $M_{\pm}$ are minimal in $S^{n+1}(1)$ with
$\sigma(M_{\pm})=1$. However, $M_{\pm}$ are not homeomorphic to the spheres.
\end{thm}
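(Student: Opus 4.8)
The plan is to establish the three separate claims about $M_\pm$: that they are minimal in $S^{n+1}(1)$, that $\sigma(M_\pm)=1$, and that they are not homeomorphic to spheres. Minimality is classical and follows from the fact that the focal submanifolds of an isoparametric hypersurface in a sphere are always minimal (a result going back to M\"{u}nzner), so I would simply recall this. The topological statement is also known from the work of M\"{u}nzner on the cohomology of focal submanifolds: for $g=4$ the focal submanifolds $M_\pm$ have the rational cohomology of (and in fact the homology of) a nontrivial iterated sphere bundle over a sphere, never that of a single sphere, since $g=4$ forces nonvanishing cohomology in several intermediate dimensions. So the crux of the theorem is purely the computation $\sigma(M_\pm)=1$, and that is the step I expect to be the main obstacle.

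To compute $\sigma$, I would use the tube/focal structure. Fix a point $p\in M_+$ and let $\nu$ be a unit normal to $M_+$ in $S^{n+1}(1)$ lying in the focal directions. The key fact from M\"{u}nzner's theory is that for an isoparametric hypersurface with $g=4$, the shape operator $A_\nu$ of $M_+$ in the direction $\nu$ has only the eigenvalues $1$, $0$, $-1$ (equivalently, the principal curvatures of the parallel hypersurfaces, specialized at the focal radius, take the values $\cot\theta_\alpha$ which for $g=4$ become $\pm 1$ and $0$ after the shift by the focal angle). More precisely, at each point the normal space splits and in each normal direction the second fundamental form of $M_+$, viewed as a symmetric bilinear form on $T_pM_+$, has operator norm exactly $1$; the directions realizing $|B(X,X)|^2 = 1$ come from the curvature distributions. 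Thus $|B(X,X)|^2 \le 1$ for all unit $X$, with equality attained, giving $\sigma(M_+)=1$; the argument for $M_-$ is identical by the symmetry $M_+\leftrightarrow M_-$, $(m_1,m_2)\leftrightarrow(m_2,m_1)$.

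Concretely, I would carry out the steps as follows. First, recall the second fundamental form of a focal submanifold in terms of the shape operators of nearby isoparametric hypersurfaces: if $M^n$ is the isoparametric hypersurface and $\varphi_{\theta_1}\colon M\to M_+$ is the focal map, then for $X$ tangent to $M_+$ and $\eta$ normal to $M_+$, one has a formula expressing $B_{M_+}(X,X)$ through the principal curvature data $\cot(\theta_\alpha-\theta_1)$ of $M$. Second, substitute the M\"{u}nzner values $\theta_\alpha = \theta_1 + \frac{\alpha-1}{4}\pi$ for $g=4$; the relevant quantities $\cot(\theta_\alpha-\theta_1)$ for $\alpha=2,3,4$ become $1,0,-1$, which shows every eigenvalue of every shape operator $A_\eta$ of $M_+$ (with $|\eta|=1$) lies in $\{1,0,-1\}$, hence $|A_\eta X|\le|X|$ and so $|B_{M_+}(X,X)|=\sup_{|\eta|=1}\langle A_\eta X,X\rangle \le 1$. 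Third, exhibit a unit tangent vector $X$ lying in one of the curvature subspaces of $M$ that maps under the focal map to a vector realizing the eigenvalue $\pm 1$, proving the sup is attained so $\sigma(M_+)=1$; then repeat verbatim for $M_-$. Finally, invoke M\"{u}nzner's cohomology computation to conclude $M_\pm$ are not homotopy spheres, hence not homeomorphic to spheres. The main difficulty is bookkeeping the normal-space splitting and confirming that no combination of normal directions can push $|B(X,X)|^2$ above $1$ — i.e., that the supremum over the full unit normal sphere, not just a single direction, still equals $1$.
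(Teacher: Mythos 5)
Your proposal is correct, and for the key step $\sigma(M_\pm)=1$ it takes a genuinely different route from the paper. You argue via the shape operators of the focal submanifold: for every unit normal $\eta$ at a point of $M_+$, the operator $A_\eta$ has eigenvalues exactly $1,0,-1$ (with multiplicities $m_2,m_1,m_2$), a standard consequence of M\"unzner's tube formula; hence $|B(X,X)|=\sup_{|\eta|=1}\langle A_\eta X,X\rangle\le 1$ for unit $X$, with equality at any unit eigenvector of any $A_\eta$ for the eigenvalue $\pm1$. Note that the worry you raise at the end — whether some combination of normal directions could push $|B(X,X)|^2$ above $1$ — is already resolved by your own setup, since the eigenvalue fact holds for \emph{every} unit normal $\eta$, not merely for a frame, and $|B(X,X)|$ is precisely the supremum of $\langle A_\eta X,X\rangle$ over the full unit normal sphere; likewise the attainment step does not require passing through the focal map, as one may simply take an eigenvector of some $A_\eta$ directly on $M_+$. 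The paper instead works with the Ozeki--Takeuchi expansion of the Cartan--M\"unzner polynomial $F$ at $x\in M_+$, which gives the identity $|B(X,X)|^2=\frac{|X|^4-F(X)}{2}$ for $X\in T_xM_+$; since $F$ ranges in $[-1,1]$ on the unit sphere this yields the bound, and equality is exhibited via the Peirce decomposition $T_xM_+=V'_{-3}(x')\oplus V'_3(y)\oplus V'_3(y')$ of the associated isoparametric triple system (Dorfmeister--Neher, Immervoll), which produces unit tangent vectors $X_1$ lying on $M_-$ with $F(X_1)=-1$. Your argument is more elementary and self-contained; the paper's buys a sharper description, namely that $|B(X,X)|^2$ equals $1$ exactly on the unit tangent vectors that lie on the opposite focal submanifold and $0$ exactly on those lying on $M_+$ itself. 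The minimality and the topological conclusion (via M\"unzner's computation of the cohomology ring of $M_\pm$) are handled the same way in both.
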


\begin{rem}
1). If $m_1$ is odd, $M_+\subset S^{n+1}(1)$ in Theorem \ref{counter example} is
a counterexample to Conjecture \ref{weak} and Conjecture \ref{strong}. Similarly,
if $m_2$ is odd, $M_- \subset S^{n+1}(1)$ can be also served as a counterexample to both
of the conjectures.

2). It is not difficult to show directly that the normal connections of those focal
submanifolds in unit spheres are non-flat despite the dimensions.
\end{rem}

The present paper is organized as follows. In Section 2, we will prove Theorem
\ref{filtration} and give a detailed investigation into the geometric properties
of isoparametric foliations we constructed. Based on Section 2, Theorem
\ref{eigenvalue} will be proved in Section 3. In Section 4, we are mainly
concerned with the harmonicity of the focal maps. Moreover, the stability
will be studied as well. Finally in Section 5, infinitely many counterexamples will be
provided to the conjectures of Leung.

\section{Constructions of isoparametric foliations}
The aim of this section is to give a complete proof of Theorem \ref{filtration}.
For convenience, the proof will be divided into several lemmas.

\begin{lem}\label{isoparametric}
Let $\{P_0,\cdots,P_m\}$ be a symmetric Clifford system on $\mathbb{R}^{2l}$ and $M_i, N_i$ defined in the introduction. For $0 \leq i \leq m-1$,
the function $f_i:M_i\rightarrow \mathbb{R}, x\mapsto \langle P_{i+1}x, x\rangle$, is an isoparametric function with
$\mathrm{Im}(f_i)=[-1,~1]$ and satisfies
$$|\nabla f_i|^2=4(1-f_i^2),~\triangle f_i~~=-4(l-i-1)f_i.$$
While for $2 \leq i \leq m$, the function $g_i:N_i\rightarrow \mathbb{R}, x\mapsto\langle P_ix, x\rangle$, is also an isoparametric function with
$\mathrm{Im}(g_i)=[-1,~1]$ and satisfies $$|\nabla g_i|^2=4(1-g_i^2),~
\triangle g_i~~=-4ig_i.$$
\end{lem}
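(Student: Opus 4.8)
The plan is to work directly with the ambient Euclidean structure and exploit the fact that $M_i$ and $N_i$ are themselves (regular or singular) level sets of the Cartan-M\"{u}nzner-type quadratic forms $\langle P_\alpha x, x\rangle$, so that the restricted functions $f_i, g_i$ admit clean extrinsic descriptions. Concretely, for the first statement I would introduce the functions $p_\alpha(x) = \langle P_\alpha x, x\rangle$ on $\mathbb{R}^{2l}$, whose Euclidean gradients are $\tilde\nabla p_\alpha = 2P_\alpha x$. Since the $P_\alpha$ are symmetric and anticommute with $P_{\alpha+\beta}$ satisfying $P_\alpha P_\beta + P_\beta P_\alpha = 2\delta_{\alpha\beta}I$, the vectors $x, P_0 x, \dots, P_m x$ are mutually orthogonal for $x \in S^{2l-1}(1)$, each of unit length, and this orthonormality is the computational engine behind everything. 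The first key step is therefore to record these orthogonality relations and their consequences: on $M_i$ one has $p_0(x) = \cdots = p_i(x) = 0$, the normal space to $M_i$ in $S^{2l-1}(1)$ at $x$ is spanned by the orthonormal set $\{P_0 x, \dots, P_i x\}$, and $P_{i+1}x$ is a unit vector orthogonal to all of these and to $x$, hence tangent to $M_i$.

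The second step is the gradient computation. Writing $f_i = p_{i+1}|_{M_i}$, the intrinsic gradient $\nabla f_i$ is the orthogonal projection of $\tilde\nabla p_{i+1} = 2P_{i+1}x$ onto $T_x M_i$; subtracting off the $S^{2l-1}(1)$-radial component $\langle 2P_{i+1}x, x\rangle x = 2 f_i(x)\, x$ and the normal components $\langle 2P_{i+1}x, P_\alpha x\rangle P_\alpha x$ for $0 \le \alpha \le i$, all of which vanish by orthogonality except the radial one, yields $\nabla f_i = 2P_{i+1}x - 2f_i(x)\, x$ and hence $|\nabla f_i|^2 = 4\langle P_{i+1}x, P_{i+1}x\rangle - 4 f_i^2 = 4(1 - f_i^2)$, using $|P_{i+1}x|^2 = 1$ on the sphere. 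For the Laplacian I would compute $\triangle f_i = \tilde\triangle p_{i+1} - (\text{Hessian terms in the sphere and normal directions}) - (\text{mean curvature terms})$; more efficiently, I would use that $M_i$ is a minimal submanifold of $S^{2l-1}(1)$ (which follows, or can be proved in parallel, from the fact that $M_i$ is a real-algebraic minimal cone's link, being defined by harmonic quadratic polynomials $p_\alpha$ restricted to the sphere) so that $\triangle_{M_i} p_{i+1} = \triangle_{S^{2l-1}} p_{i+1} - (\text{second fundamental form correction})$, and then restrict the known spherical Laplacian of a quadratic form. Since $\triangle_{S^{2l-1}(1)} p_{i+1} = -2(2l) p_{i+1} + \text{tr}(P_{i+1}) \cdot(\text{const})$ and $\text{tr}(P_{i+1}) = 0$ for a symmetric Clifford system element, one gets a linear multiple of $f_i$; pinning down the constant $-4(l-i-1)$ requires carefully accounting for the codimension-$(i+1)$ correction coming from the second fundamental form of $M_i$ in $S^{2l-1}(1)$ in the directions $P_0 x, \dots, P_i x$. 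That the image is exactly $[-1,1]$ is immediate from $|f_i| = |\langle P_{i+1}x, x\rangle| \le |P_{i+1}x||x| = 1$ with equality attained at $x = \frac{1}{\sqrt 2}(e \pm P_{i+1}e)$ for suitable $e$.

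The argument for $g_i$ on $N_i$ is structurally identical but with one twist: $N_i$ is defined by the single equation $\sum_{\alpha=0}^{i} p_\alpha(x)^2 = 1$, so it is a hypersurface-type condition rather than an intersection of hyperplanes, and at a point $x \in N_i$ the unit normal to $N_i$ in $S^{2l-1}(1)$ is (a normalization of) $\sum_\alpha p_\alpha(x) P_\alpha x$. Here I would again compute $\nabla g_i$ by projecting $2 P_i x$ onto $T_x N_i$, noting that the component along the $N_i$-normal $\sum_\alpha p_\alpha P_\alpha x$ contributes $\langle 2P_i x, \sum_\alpha p_\alpha P_\alpha x\rangle = 2 p_i(x) = 2 g_i(x)$ after using orthonormality of $\{P_\alpha x\}$, while the radial component is $\langle 2P_i x, x\rangle x = 0$; this again produces $|\nabla g_i|^2 = 4(1 - g_i^2)$, and the Laplacian $-4ig_i$ comes from the spherical Laplacian of $p_i$ corrected by the (single) mean curvature term of $N_i$, whose value $-4i$ reflects that $N_i$ is the tube of radius related to $M_-$ and hence has a computable constant mean curvature in $S^{2l-1}(1)$.

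The main obstacle I expect is not the gradient identities, which fall out of orthonormality almost mechanically, but the exact determination of the Laplacian constants $-4(l-i-1)$ and $-4i$. This requires knowing the mean curvature vector (equivalently the trace of the second fundamental form) of $M_i$, respectively $N_i$, inside $S^{2l-1}(1)$ in terms of $i$ and $l$; a clean way to get these is to first establish that each $M_i$ and each $N_i$ is minimal in the sphere — which is plausible because the defining polynomials are harmonic and of degree $2$, so the cones over them are minimal by a standard criterion — and then the Laplacian reduces to the spherical one, $\triangle_{S^{2l-1}(1)} (p|_{S}) = (\tilde\triangle p)|_S - \tilde{\text{Hess}}\, p(x,x) - (2l-1)\langle \tilde\nabla p, x\rangle$, evaluated on a quadratic $p = p_\alpha$ with $\tilde\triangle p_\alpha = 2\,\text{tr}(P_\alpha) = 0$ and $\tilde{\text{Hess}}\, p_\alpha(x,x) = 2\langle P_\alpha x, x\rangle = 2 p_\alpha(x)$. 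Reconciling the naive count this produces with the stated constants — i.e., verifying that the codimension of $M_i$ in $S^{2l-1}(1)$ enters exactly as an extra $-4i$ shift while $M_i$'s own ambient mean curvature in the sphere is what makes the remaining pieces work out — is the delicate bookkeeping step, and I would double-check it against the known OT-FKM endpoint case $i = m$, where $f_{m-1}$ should reproduce the classical Laplacian of the height function on $M_+$.
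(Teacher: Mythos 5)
Your plan for $f_i$ on $M_i$ tracks the paper's proof closely: extend to $F_i=\langle P_{i+1}x,x\rangle$ on $\mathbb{R}^{2l}$, identify $\nu_xM_i=\mathrm{Span}\{x,P_0x,\dots,P_ix\}$, project the ambient gradient $2P_{i+1}x$ (only the radial term survives because $P_\alpha P_{i+1}$ is skew-symmetric for $\alpha\leq i$), and compute the Laplacian by splitting the ambient Hessian trace into tangential pieces plus a mean-curvature correction, using minimality of $M_i$ in $S^{2l-1}(1)$. (A small slip: you assert that $P_{i+1}x$ is orthogonal to $x$ on $M_i$ and hence tangent — but $\langle P_{i+1}x,x\rangle=f_i(x)$ is generically nonzero, which is precisely the radial component you correctly subtract off in the very next sentence.) The bookkeeping you flag as delicate is exactly the step $\sum_a\widetilde{\mathrm{Hess}}F_i(e_a,e_a)=\tilde\triangle F_i-\sum_{\alpha\le i}\widetilde{\mathrm{Hess}}F_i(P_\alpha x,P_\alpha x)-\widetilde{\mathrm{Hess}}F_i(x,x)=2if_i$, combined with $\langle\sum_aB(e_a,e_a),x\rangle=-(2l-i-2)$; this is what the paper does and it works.

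The genuine gap is in your treatment of $N_i$. You model $N_i$ as a hypersurface in $S^{2l-1}(1)$ cut out by the single equation $\sum_{\alpha\le i}p_\alpha^2=1$, and propose $\sum_\alpha p_\alpha(x)P_\alpha x$ as its unit normal. But $\sum_\alpha p_\alpha(x)P_\alpha x=\mathcal{P}x=x$ is the position vector, i.e.\ the normal to the \emph{sphere}, not a normal to $N_i$ inside the sphere: the Euclidean gradient of $\sum p_\alpha^2$ is $4\mathcal{P}x=4x$, so $N_i$ is a \emph{critical} level of this function restricted to $S^{2l-1}(1)$, not a regular one. Indeed $\dim N_i=l+i-1$, so $N_i$ has codimension $l-i$ in $S^{2l-1}(1)$, not codimension one. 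The actual normal space is the one from FKM, namely a subspace of $E_-(\mathcal{P})$ together with $\mathbb{R}x$. Your gradient formula $\nabla g_i=2P_ix-2g_ix$ happens to be correct, but not for the reason you give; one must check that $P_ix-g_ix=Qx$ with $Q=P_i-g_i\mathcal{P}$ is orthogonal to \emph{all} the $l-i$ normal directions, which uses that $Q$ lies in the Clifford sphere with $\langle Q,\mathcal{P}\rangle=0$. For the Laplacian of $g_i$, your stated rationale (``$N_i$ is the tube \dots single mean curvature term'') is wrong on both counts: $N_i$ is a focal submanifold (not a tube) and it is \emph{minimal} in $S^{2l-1}(1)$, so there is no nonzero mean-curvature correction; what must be computed instead is $\mathrm{Tr}(P_i|_{T_xN_i})$, i.e.\ the sum of $\widetilde{\mathrm{Hess}}G_i(e_a,e_a)$ over the tangent space, which is the genuinely nontrivial step. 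The paper does not compute this from scratch either; it cites Solomon \cite{So92}. As written, your proposal would not produce the constant $-4i$ without repairing the normal-space picture of $N_i$ and then either reproducing Solomon's trace computation or citing it.
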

\begin{proof}
First, the function $f_i$ on $M_i$ will be considered. According to \cite{FKM81}, $M_i$ is a smooth submanifold in
$S^{2l-1}(1)\subset \mathbb{R}^{2l}$ with $\mathrm{dim}M_i=2l-i-2$. As we defined in the introduction, it is convenient to regard the function $f_i$ as
the restriction of
the function $F_i: \mathbb{R}^{2l}\rightarrow\mathbb{R}, x\mapsto \langle P_{i+1}x, x\rangle$ to $M_i$.
Henceforth, we will denote the covariant derivatives and Laplacians of $M_i$ and $\mathbb{R}^{2l}$ by
$\nabla, \triangle$ and $\tilde{\nabla}, \tilde{\triangle}$ respectively.
For any $x\in M_i$, $\nu_x M_i$, the normal space of $M_i$ in $\mathbb{R}^{2l}$ at $x$, is equal to
$\mathrm{Span}\{P_0x,\cdots,P_ix, x\}$.
By a direct computation and the property of the symmetric Clifford system, we have $\tilde{\nabla}F_i=2P_{i+1}x$, and
$\langle \tilde{\nabla}F_i, P_{\alpha}x \rangle=0$ for any $0\leq \alpha\leq i$. And it follows that
\begin{eqnarray}
\nabla f_i&=&\tilde{\nabla}F_i-\langle\tilde{\nabla}F_i, x\rangle x-\sum_{\alpha=0}^{i}\langle\tilde{\nabla}F_i, P_{\alpha}x\rangle
P_{\alpha}x\nonumber\\
&=&\tilde{\nabla}F_i-\langle\tilde{\nabla}F_i, x\rangle x\nonumber\\
&=&2P_{i+1}x-2\langle P_{i+1}x, x\rangle x.\nonumber
\end{eqnarray}
Hence $|\nabla f_i|^2=4(1-f_i^2)$. For the Laplacian of $f_i$, we note
that $$\mathrm{Hess}f_i(X,Y)=\widetilde{\mathrm{Hess}}F_i(X,Y)+B(X,Y)(F_i)$$
for $X, Y \in T_xM_i$,
where $B$ is the second fundamental form of $M_i$ in $\mathbb{R}^{2l}$, and $\mathrm{Hess}f_i$,
$\widetilde{\mathrm{Hess}}F_i$ are the Hessians of $f_i$, $F_i$, respectively.
Choose an orthonormal basis $\{e_{a}\}_{a=1}^{2l-i-2}$ for $T_xM_i$. In virtue of
M\"{u}nzner \cite{Mu80}, $M_i$ is minimal in $S^{2l-1}(1)$, i.e., $\sum_{a}\langle B(e_a, e_a), P_{\alpha}x\rangle=0$,
for $0\leq \alpha\leq i$.
Thus,
\begin{eqnarray}
\triangle f_i&=&\sum_{a}\mathrm{Hess}f_i(e_a,e_a)\nonumber\\
&=&\sum_{a}
\widetilde{\mathrm{Hess}}F_i(e_a,e_a)+\sum_{a}B(e_a,e_a)(F_i)\nonumber\\
&=&\sum_{a}\widetilde{\mathrm{Hess}}F_i(e_a,e_a)+\langle\sum_{a}B(e_a,e_a),x\rangle x(F_i)\nonumber\\
&=&\sum_{a}\widetilde{\mathrm{Hess}}F_i(e_a,e_a)-2(2l-i-2)f_i,\nonumber
\end{eqnarray}
where we have used $\langle\sum_{a}B(e_a,e_a), x\rangle=-(2l-i-2)$ and $x(F_i)=2f_i$.
Moreover,
\begin{eqnarray}
\sum_{a}\widetilde{\mathrm{Hess}}F_i(e_a,e_a)&=&\tilde{\triangle}F_i-\sum_{\alpha=0}^{i}\widetilde{\mathrm{Hess}}F_i(P_{\alpha}x,P_{\alpha}x)-
\widetilde{\mathrm{Hess}}F_i(x, x)\nonumber\\
&=&\mathrm{Tr}P_{i+1}+2(i+1)f_i-2f_i\nonumber\\
&=&2if_i,\nonumber
\end{eqnarray}
where $\tilde{\triangle}F_i=\mathrm{Tr}P_{i+1}=0$ and $\widetilde{\mathrm{Hess}}F_i(P_{\alpha}x,P_{\alpha}x)=-2f_i$ by using
the properties of symmetric Clifford system. In conclusion,
$\triangle f_i=-4(l-i-1)f_i.$

Next, we will deal with the function $g_i$ on $N_i$. According to \cite{FKM81}, $N_i$ is also
a smooth submanifold in
$S^{2l-1}(1)\subset \mathbb{R}^{2l}$ with $\mathrm{dim}N_i=l+i-1$.
It is also convenient to regard the function $g_i$ as
the restriction of
the function $G_i:\mathbb{R}^{2l}\rightarrow\mathbb{R}, x\mapsto \langle P_{i}x, x\rangle$ to $N_i$.
With no possibility of confusion, we will also denote the covariant derivative and Laplacian of $N_i$ by
$\nabla$ and $\triangle$, respectively.
For any $x\in N_i$, we can define $\mathcal{P}=\sum_{\alpha=0}^{i}\langle P_{\alpha}x, x\rangle P_{\alpha}$.
And thus $\mathcal{P}x=x$. According to \cite{FKM81},
the normal space of $N_i$ in $\mathbb{R}^{2l}$ at $x$, denoted by $\nu_x N_i$, is equal to
$$\{\varsigma\in E_-(\mathcal{P})~|~\varsigma \bot Qx, \forall Q\in
\Sigma(P_0,\cdots,P_i), \langle Q, \mathcal{P}\rangle=0\}\oplus \mathbb{R}x,$$
where $E_-(\mathcal{P})$ is eigenspace of $\mathcal{P}$
for the eigenvalue $-1$ and $\Sigma(P_0,\cdots,P_i)$
the Clifford sphere spanned by $P_0,\cdots,P_i$. Since $\tilde{\nabla}G_i=2P_ix$ and $\langle Q, \mathcal{P}\rangle=0$,
$$\nabla g_i=2P_ix-2\langle P_ix, x\rangle x=2(P_i-\langle P_ix, x\rangle\mathcal{P})x=2Qx,$$
where $Q=P_i-\langle P_ix, x\rangle\mathcal{P}$ (also see \cite{TY12}).
As a result, $|\nabla g_i|^2=4(1-g_i^2).$
At last, due to \cite{So92}, the equation of Laplacian of $g_i$ holds.
\end{proof}
Furthermore, the following lemma investigates the extrinsic geometry of isoparametric hypersurfaces given by the preceding lemma.
\begin{lem}\label{principal curvature}
(1). For the isoparametric function $f_i$ on $M_i$ and any $c\in (-1,~1)$, the regular level set
$\mathcal{U}_c=f_i^{-1}(c)$ has $3$ distinct constant principal curvatures
$-\sqrt{\frac{1-c}{1+c}}$, 0, $\sqrt{\frac{1+c}{1-c}}$ with
multiplicities $l-i-2$, $i+1$, and $l-i-2$ respectively,
\emph{w.r.t.} the unit normal $\xi=\frac{\nabla f_i}{|\nabla f_i|}$.
Moreover, for any $x\in\mathcal{U}_c$, the corresponding principal spaces are
\begin{eqnarray}
T_{-\sqrt{\frac{1-c}{1+c}}}(\xi)&=&E_+(P_{i+1})\cap T_x\mathcal{U}_c,\nonumber\\
T_{\sqrt{\frac{1+c}{1-c}}}(\xi)&=&E_-(P_{i+1})\cap T_x\mathcal{U}_c,\nonumber\\
T_0(\xi)&=&\{Q\xi~|~Q\in\mathbb{R}\Sigma(P_0,\cdots,P_i)\},\nonumber
\end{eqnarray}
where $E_\pm(P_{i+1})$ are eigenspaces of $P_{i+1}$ for the eigenvalues $\pm1$ with
$\mathrm{dim}E_\pm(P_{i+1})=l$, and
$\Sigma(P_0,\cdots,P_i)$ is the Clifford sphere spanned by $P_0,\cdots,P_i$.

(2). For the isoparametric function $g_i$ on $N_i$ and any $c\in (-1,~1)$, the regular level set
$\mathcal{V}_c=g_i^{-1}(c)$ has $3$ distinct constant principal curvatures
$-\sqrt{\frac{1-c}{1+c}}$, $\sqrt{\frac{1+c}{1-c}}$, 0,  with
multiplicities $i-1$, $i-1$ and $l-i$, respectively,
\emph{w.r.t.} the unit normal $\eta=\frac{\nabla g_i}{|\nabla g_i|}$.
Moreover, for any $x\in\mathcal{V}_c$, the corresponding principal spaces are
\begin{eqnarray}
T_{-\sqrt{\frac{1-c}{1+c}}}(\eta)&=&E_+(P_{i})\cap T_x\mathcal{V}_c=\mathrm{Span}\{Q(x-P_ix)|Q\in\Sigma(P_0,\cdots,P_{i-1}), \langle Q, \mathcal{Q}\rangle=0\},\nonumber\\
T_{\sqrt{\frac{1+c}{1-c}}}(\eta)&=&E_-(P_{i})\cap T_x\mathcal{V}_c=\mathrm{Span}\{Q(x+P_ix)|Q\in\Sigma(P_0,\cdots,P_{i-1}), \langle Q, \mathcal{Q}\rangle=0\},\nonumber\\
T_0(\eta)&=&\{X\in E_+(\mathcal{P})|\langle X,x\rangle=0, \langle X, P_iRx\rangle=0, \forall R\in \Sigma(P_0,\cdots,P_i), \langle R, \mathcal{P}\rangle=0\},\nonumber
\end{eqnarray}
where $\mathcal{P}=\sum_{\alpha=0}^{i}\langle P_{\alpha}x, x\rangle P_{\alpha}$,
and $\mathcal{Q}=\frac{1}{\sqrt{1-c^2}}(\langle P_0x, x\rangle P_0+\cdots+\langle P_{i-1}x, x\rangle P_{i-1})$.
\end{lem}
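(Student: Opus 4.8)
The plan is to compute the shape operator $S_\xi$ (resp. $S_\eta$) of the level hypersurface directly, using the explicit formula $\nabla f_i = 2P_{i+1}x - 2f_i\, x$ from Lemma \ref{isoparametric} together with the fact (already recorded in the proof of Lemma \ref{isoparametric}) that the normal space of $M_i$ in $\mathbb{R}^{2l}$ at $x$ is $\mathrm{Span}\{P_0x,\dots,P_ix,x\}$. For part (1), fix $c\in(-1,1)$ and $x\in\mathcal{U}_c$. Since $\mathcal{U}_c$ sits inside $M_i\subset S^{2l-1}(1)$, I would first observe that $T_x\mathcal{U}_c = T_xM_i\cap\xi^\perp$ and that, as a subspace of $\mathbb{R}^{2l}$, $T_x\mathcal{U}_c$ is the orthogonal complement of $\mathrm{Span}\{P_0x,\dots,P_ix,x,P_{i+1}x\}$. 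The key computation is: for $X\in T_x\mathcal{U}_c$, the shape operator is $S_\xi X = -(\tilde\nabla_X \xi)^{\top}$, and since $\xi = \frac{1}{\sqrt{1-c^2}}(P_{i+1}x - c\,x)$ is (up to the constant normalizing factor) the restriction of a linear-plus-radial field, $\tilde\nabla_X\xi = \frac{1}{\sqrt{1-c^2}}(P_{i+1}X - cX)$. Projecting onto $T_x\mathcal{U}_c$ and using $P_{i+1}^2 = I$, one sees immediately that vectors in $E_+(P_{i+1})\cap T_x\mathcal{U}_c$ are eigenvectors with $P_{i+1}X = X$, giving principal curvature $-\frac{1-c}{\sqrt{1-c^2}} = -\sqrt{\frac{1-c}{1+c}}$; vectors in $E_-(P_{i+1})\cap T_x\mathcal{U}_c$ give $P_{i+1}X=-X$ and principal curvature $\frac{1+c}{\sqrt{1-c^2}} = \sqrt{\frac{1+c}{1-c}}$; and one must check that $T_0(\xi) = \{Q\xi \mid Q\in\mathbb{R}\Sigma(P_0,\dots,P_i)\}$ actually lies in $T_x\mathcal{U}_c$, is orthogonal to $P_{i+1}$-eigenspaces, and is annihilated by $P_{i+1}X - cX$ after projection — here one uses the anticommutation relations $P_\alpha P_{i+1} = -P_{i+1}P_\alpha$ for $\alpha\le i$, which force $\langle Q\xi, \text{normal directions}\rangle$ to vanish and $P_{i+1}(Q\xi)$ to land back in the normal space so that the tangential projection kills it.

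The dimension bookkeeping is then purely numerical: $\dim M_i = 2l-i-2$, so $\dim\mathcal{U}_c = 2l-i-3$; the space $\mathbb{R}\Sigma(P_0,\dots,P_i)$ has dimension $i+1$ (and $Q\mapsto Q\xi$ is injective on it since the $P_\alpha x$ are independent), so $\dim T_0(\xi)=i+1$; the two remaining eigenspaces must then split the remaining $2l-2i-4$ dimensions, and symmetry under $P_{i+1}\leftrightarrow -P_{i+1}$ (equivalently $c\leftrightarrow -c$) forces each to have dimension $l-i-2$. One should double-check consistency with minimality: $(l-i-2)(-\sqrt{\tfrac{1-c}{1+c}}) + (i+1)\cdot 0 + (l-i-2)\sqrt{\tfrac{1+c}{1-c}}$ must equal the mean curvature predicted by $\triangle f_i = -4(l-i-1)f_i$, which it does after a short calculation — this is a good sanity check to include.

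For part (2), the same strategy applies but is genuinely more delicate because $N_i$ is not cut out by linear equations and its normal space, recalled from \cite{FKM81} in the proof of Lemma \ref{isoparametric}, is $\{\varsigma\in E_-(\mathcal{P})\mid \varsigma\perp Qx,\ \forall Q\in\Sigma(P_0,\dots,P_i),\ \langle Q,\mathcal{P}\rangle=0\}\oplus\mathbb{R}x$ with $\mathcal{P}=\sum_{\alpha=0}^i\langle P_\alpha x,x\rangle P_\alpha$ and $\mathcal{P}x=x$. Here $\nabla g_i = 2Qx$ with $Q = P_i - \langle P_ix,x\rangle\mathcal{P}$, so $\eta = \frac{1}{\sqrt{1-c^2}}\,Qx$. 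I would compute $\tilde\nabla_X\eta$ for $X\in T_x\mathcal{V}_c$, carefully tracking the derivative of the \emph{point-dependent} operator $\mathcal{P}$ (this is the one new subtlety absent in part (1)), then project onto $T_x\mathcal{V}_c$ and diagonalize. The claimed principal spaces $E_\pm(P_i)\cap T_x\mathcal{V}_c$, described concretely as $\mathrm{Span}\{Q'(x\mp P_ix)\mid Q'\in\Sigma(P_0,\dots,P_{i-1}),\ \langle Q',\mathcal{Q}\rangle=0\}$, should be verified to be $P_i$-eigenspaces by the anticommutation relations, and one computes the principal curvatures $\mp\sqrt{\tfrac{1\mp c}{1\pm c}}$ exactly as before. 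The zero-eigenspace $T_0(\eta)$ requires showing it is $\perp$ to the normal space of $\mathcal{V}_c$ in $\mathbb{R}^{2l}$ (which includes $\eta$ and the normal space of $N_i$), is $P_i\mathcal{P}$-invariant, and is annihilated by the projected connection — the description in terms of $E_+(\mathcal{P})$ and orthogonality to $P_iRx$ encodes exactly these conditions.

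The main obstacle, as indicated, is part (2): keeping track of the tangential/normal decomposition for $N_i$ when both $\mathcal{P}$ and $Q$ vary with $x$, and verifying that the three listed subspaces are mutually orthogonal, span $T_x\mathcal{V}_c$ with the stated dimensions $i-1$, $i-1$, $l-i$ (summing to $2(i-1)+(l-i) = l+i-2 = \dim N_i - 1 = \dim\mathcal{V}_c$, which is the crucial consistency check), and are genuinely eigenspaces of the shape operator rather than merely invariant subspaces. I expect the cleanest route is to first establish all the orthogonality relations among $\{x, P_\alpha x, Qx, P_iRx, \dots\}$ using only the Clifford relations and the defining equations of $N_i$, then read off the eigenvalue decomposition almost formally. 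Throughout, the repeated identities $P_\alpha^2 = I$, $P_\alpha P_\beta = -P_\beta P_\alpha$ for $\alpha\ne\beta$, $\mathrm{Tr}\,P_\alpha = 0$, and $\mathcal{P}^2 = I$ on the relevant subspace do all the real work.
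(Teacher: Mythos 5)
Your plan matches the paper's proof in essentially every structural respect: both compute the shape operator of $\mathcal{U}_c\subset M_i$ (resp.\ $\mathcal{V}_c\subset N_i$) from the formula $A_{\xi}X=-\tfrac{1}{\sqrt{1-c^2}}\bigl((P_{i+1}X)^{T}-cX\bigr)$ obtained by Euclidean differentiation of $\xi=\tfrac{1}{\sqrt{1-c^2}}(P_{i+1}x-cx)$, then exhibit the three candidate principal spaces and close by a dimension count. Two slips in the sketch are worth correcting before you carry it out.

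First, for $X=Q\xi$ with $Q\in\mathbb{R}\Sigma(P_0,\dots,P_i)$, you claim that $P_{i+1}(Q\xi)$ falls entirely into the normal space so the tangential projection kills it. That is not what happens, and if it were, the conclusion would fail: a short computation using the anticommutation relations gives $P_{i+1}Q\xi=-\sqrt{1-c^2}\,Qx+c\,Q\xi$, so the tangential component is $cQ\xi$, not $0$, and $A_{\xi}(Q\xi)=-\tfrac{1}{\sqrt{1-c^2}}\bigl(cQ\xi-cQ\xi\bigr)=0$ by \emph{cancellation} with the $-cX$ term. Had $P_{i+1}Q\xi$ been purely normal one would instead get $A_{\xi}(Q\xi)=\tfrac{c}{\sqrt{1-c^2}}Q\xi\neq0$.

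Second, your announced concern about ``carefully tracking the derivative of the point-dependent operator $\mathcal{P}$'' in part (2) is a false obstacle. On $N_i$ one has $\mathcal{P}x=x$, so $Qx=P_ix-\langle P_ix,x\rangle x$ and the unit normal is simply $\eta=\tfrac{1}{\sqrt{1-c^2}}(P_ix-cx)$, which involves only the constant matrix $P_i$ and a radial term. Since $X(g_i)=0$ for $X\in T_x\mathcal{V}_c$, one gets $\tilde\nabla_X\eta=\tfrac{1}{\sqrt{1-c^2}}(P_iX-cX)$ with no contribution from $\mathcal{P}$-dependence, and the shape operator has exactly the same form as in part (1). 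The operators $\mathcal{P}$ and $\mathcal{Q}$ do enter, but only in the \emph{algebraic} description of the eigenspaces and in verifying membership in $T_x\mathcal{V}_c$ (using the explicit form of $\nu_xN_i$ and the identity $\mathcal{P}P_i=-P_i\mathcal{P}+2\langle P_ix,x\rangle I_{2l}$), not in the differentiation of $\eta$. With these two points repaired, your proposal is the paper's argument.
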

\begin{proof}
(1). For any $c\in (-1,~1)$, it follows from Lemma \ref{isoparametric}
that $\mathcal{U}_c$ is an isoparametric hypersurface in $M_i$ with $\mathrm{dim}\mathcal{U}_c=2l-i-3$,
and for each $x\in \mathcal{U}_c\subset M_i$, the unit normal
$\xi=\frac{1}{\sqrt{1-c^2}}(P_{i+1}x-cx)$. Hence, the corresponding shape operator
$A_{\xi}:T_x\mathcal{U}_c\rightarrow
T_x\mathcal{U}_c$ is given by
$A_{\xi}X=-\frac{1}{\sqrt{1-c^2}}((P_{i+1}X)^T-cX)$, for each $X\in T_x\mathcal{U}_c$, where $(P_{i+1}X)^T$
is the tangential component of
$P_{i+1}X$ in $T_x\mathcal{U}_c$.

Suppose $X\in E_+(P_{i+1})\cap T_x\mathcal{U}_c$. Then $A_{\xi}X=-\sqrt{\frac{1-c}{1+c}}X$ and $X\in
T_{-\sqrt{\frac{1-c}{1+c}}}(\xi)$. Hence,
$E_+(P_{i+1})\cap T_x\mathcal{U}_c\subset T_{-\sqrt{\frac{1-c}{1+c}}}(\xi)$.
Notice that
\begin{eqnarray}
E_+(P_{i+1})\cap T_x\mathcal{U}_c&=&\{X\in E_+(P_{i+1})~|~\langle X, x\rangle=0, \langle X, \xi\rangle=0,
\langle X, Qx\rangle=0, \forall Q \in \Sigma(P_0,\cdots,P_i)\}\nonumber\\
&=&\{X\in E_+(P_{i+1})~|~\langle X, Qx\rangle=0, \forall Q \in \Sigma(P_0,\cdots,P_i, P_{i+1})\},\nonumber
\end{eqnarray}
so $\mathrm{dim}(E_+(P_{i+1})\cap T_x\mathcal{U}_c)\geq l-i-2$.

Next, suppose $X\in E_-(P_{i+1})\cap T_x\mathcal{U}_c$. Then as above, we have $A_{\xi}X=\sqrt{\frac{1+c}{1-c}}X$ and $X\in
T_{\sqrt{\frac{1+c}{1-c}}}(\xi)$. Hence,
$E_-(P_{i+1})\cap T_x\mathcal{U}_c\subset T_{\sqrt{\frac{1+c}{1-c}}}(\xi)$ and
\begin{eqnarray}
E_-(P_{i+1})\cap T_x\mathcal{U}_c&=&\{X\in E_-(P_{i+1})~|~\langle X, Qx\rangle=0, \forall Q \in \Sigma(P_0,\cdots,P_i, P_{i+1})\},\nonumber
\end{eqnarray}
which implies that $\mathrm{dim}(E_-(P_{i+1})\cap T_x\mathcal{U}_c)\geq l-i-2$.

At last, suppose $X=Q\xi$ for $Q\in\Sigma(P_0,\cdots,P_i)$. We need to show that
$X\in T_x\mathcal{U}_c$.
Observe that
\begin{eqnarray}
\langle X, x\rangle&=&\langle\xi, Qx\rangle=\frac{1}{\sqrt{1-c^2}}\langle P_{i+1}x-cx, Qx\rangle=0,\nonumber\\
\langle X, \xi\rangle&=&\langle Q\xi, \xi\rangle=\frac{1}{1-c^2}\langle Q(P_{i+1}x-cx), P_{i+1}x-cx\rangle=0,\nonumber\\
\langle X, Qx\rangle&=&\langle\xi, x\rangle=\frac{1}{\sqrt{1-c^2}}\langle P_{i+1}x-cx, x\rangle=0.\nonumber
\end{eqnarray}
Moreover, for $P\in \Sigma(P_0,\cdots,P_i)$ with $\langle P, Q\rangle=0$,
\begin{eqnarray}
\langle X, Px\rangle&=&\frac{1}{\sqrt{1-c^2}}\langle Q(P_{i+1}x-cx) Px\rangle,\nonumber\\
&=&\frac{1}{\sqrt{1-c^2}}\langle QP_{i+1}x, Px\rangle-\frac{c}{\sqrt{1-c^2}}\langle Qx, Px\rangle\nonumber\\
&=&0,\nonumber
\end{eqnarray}
where the fact that $PQP_{i+1}$ is skew-symmetric has been used. Thus $X\in T_x\mathcal{U}_c$.
Now, $A_{\xi}X=-\frac{1}{\sqrt{1-c^2}}((P_{i+1}Q\xi)^T-cQ\xi)=0,$
since
\begin{eqnarray}
(P_{i+1}Q\xi)^T&=&\frac{1}{\sqrt{1-c^2}}(P_{i+1}Q(P_{i+1}x-cx))^T\nonumber\\
&=&\frac{1}{\sqrt{1-c^2}}(-Qx+cQP_{i+1}x)^T\nonumber\\
&=&\frac{1}{\sqrt{1-c^2}}(cQP_{i+1}x-c^2Qx)^T\nonumber\\
&=&cQ\xi.\nonumber
\end{eqnarray}
So $\{Q\xi~|~Q\in\mathbb{R}\Sigma(P_0,\cdots,P_i)\}\subset T_0(\xi)$ and
$\mathrm{dim}\{Q\xi~|~Q\in\mathbb{R}\Sigma(P_0,\cdots,P_i)\}=i+1$.

We have constructed three mutually orthogonal subspaces of $T_x\mathcal{U}_c$ and the sum of the dimensions
is no less than $i+1+2(l-i-2)=\mathrm{dim} T_x\mathcal{U}_c$. Hence part (1) of the lemma follows.

(2). Analogous to part (1), for any $c\in (-1,~1)$, it
also follows from Lemma \ref{isoparametric}
that $\mathcal{V}_c$ is an isoparametric hypersurface in $N_i$ with $\mathrm{dim}\mathcal{V}_c=l+i-2$.
For each $x\in \mathcal{V}_c\subset N_i$, the unit normal
$\eta=\frac{1}{\sqrt{1-c^2}}(P_{i}x-cx)$ and the corresponding shape operator
$A_{\eta}:T_x\mathcal{V}_c\rightarrow
T_x\mathcal{V}_c$ is given by
$A_{\eta}X=-\frac{1}{\sqrt{1-c^2}}((P_{i}X)^T-cX)$, for each $X\in T_x\mathcal{V}_c$, where $(P_{i}X)^T$
is the tangential component of
$P_{i}X$ in $T_x\mathcal{V}_c$.

Suppose $X=Q(x-P_ix)$ for $Q\in\Sigma(P_0,\cdots,P_{i-1})$ with $\langle Q, \mathcal{Q}\rangle=0$.
Then
$$P_{i}X=P_iQ(x-P_ix)=Q(-P_ix+x)=X,$$
and $X\in E_+(P_i)$. The next task for us is to show
$X\in T_x\mathcal{V}_c$. It follows from $\langle Q, \mathcal{Q}\rangle=0$ that $\langle Q,
\mathcal{P}\rangle=0$, and consequently,
\begin{eqnarray}
\langle X, x\rangle&=&\langle Q(x-P_ix), x\rangle\nonumber\\
&=&\langle Qx, x\rangle-\langle Qx, P_ix\rangle\nonumber\\
&=&\langle Qx, \mathcal{P}x\rangle\nonumber\\
&=&0.\nonumber
\end{eqnarray}
and
\begin{eqnarray}
\langle X, \eta\rangle&=&\frac{1}{\sqrt{1-c^2}}\langle Q(x-P_ix), P_ix-cx\rangle\nonumber\\
&=&\sqrt{\frac{1-c}{1+c}}\langle Qx, x\rangle\nonumber\\
&=&\sqrt{\frac{1-c}{1+c}}\langle Qx, \mathcal{P}x\rangle\nonumber\\
&=&0,\nonumber
\end{eqnarray}
where the equalities $\langle Q, P_i\rangle=0$ and $\mathcal{P}x=x$ have been used.
Since
$$\nu_x N_i=\{\varsigma\in E_-(\mathcal{P})~|~\varsigma \bot Qx, \forall Q\in
\Sigma(P_0,\cdots,P_i), \langle Q, \mathcal{P}\rangle=0\}\oplus \mathbb{R}x$$
as in Lemma \ref{isoparametric}, to prove $X\in T_x\mathcal{V}_c$, it
is sufficient to show $\langle X, \zeta\rangle=0$ for each normal vector $\zeta\in \{\varsigma\in E_-(\mathcal{P})~|~\varsigma \bot Qx, \forall Q\in
\Sigma(P_0,\cdots,P_i), \langle Q, \mathcal{P}\rangle=0\}$. Actually, $\langle X, \zeta\rangle=\langle Qx, \zeta\rangle-\langle QP_{i}x, \zeta\rangle=-\langle QP_ix, \zeta\rangle$. Furthermore,
$\langle QP_ix, \zeta\rangle=\langle \mathcal{P}QP_ix, \mathcal{P}\zeta\rangle=-\langle \mathcal{P}QP_ix,\zeta\rangle=
\langle Q\mathcal{P}P_ix, \zeta\rangle=\langle Q(-P_i\mathcal{P}+2\langle P_ix, x\rangle I_{2l})x, \zeta\rangle=
-\langle QP_ix, \zeta\rangle+2\langle P_ix, x\rangle\langle Qx, \zeta\rangle=-\langle QP_ix, \zeta\rangle$,
where the identity $\mathcal{P}P_i=-P_i\mathcal{P}+2\langle P_ix, x\rangle I_{2l}$ has been used.
It follows that $\langle X, \zeta\rangle=0$ and $X\in T_x\mathcal{V}_c$. Then $A_{\eta}X=-\sqrt{\frac{1-c}{1+c}}X$
and
$$\mathrm{Span}\{Q(x-P_ix)|Q\in\Sigma(P_0,\cdots,P_{i-1}), \langle Q, \mathcal{Q}\rangle=0\}\subset E_+(P_{i})\cap T_x\mathcal{V}_c \subset T_{-\sqrt{\frac{1-c}{1+c}}}(\eta),$$
with $\mathrm{dim}\mathrm{Span}\{Q(x-P_ix)|Q\in\Sigma(P_0,\cdots,P_{i-1}), \langle Q, \mathcal{Q}\rangle=0\}=i-1.$

Similarly, suppose $X=Q(x+P_ix)$ for $Q\in\Sigma(P_0,\cdots,P_{i-1})$ with $\langle Q, \mathcal{Q}\rangle=0$. Then
$X\in E_-(P_{i})\cap T_x\mathcal{V}_c$, $A_{\eta}X=\sqrt{\frac{1+c}{1-c}}X$ and
$$\mathrm{Span}\{Q(x+P_ix)|Q\in\Sigma(P_0,\cdots,P_{i-1}), \langle Q, \mathcal{Q}\rangle=0\}\subset E_-(P_{i})\cap T_x\mathcal{V}_c \subset T_{\sqrt{\frac{1+c}{1-c}}}(\eta),$$
with $\mathrm{dim}\mathrm{Span}\{Q(x+P_ix)|Q\in\Sigma(P_0,\cdots,P_{i-1}), \langle Q, \mathcal{Q}\rangle=0\}=i-1.$

Now, suppose $X\in E_+(\mathcal{P}), \langle X,x\rangle=0$, and $\langle X, P_iRx\rangle=0$, for arbitrary $R\in \Sigma(P_0,\cdots,P_i)$ with $\langle R, \mathcal{P}\rangle=0$. In this case, to prove $X\in T_x\mathcal{V}_c$,
it is sufficient to verify $\langle X, \eta\rangle=0$. In fact, we have $\langle X, \eta\rangle=\frac{1}{\sqrt{1-c^2}}\langle X, P_ix-cx\rangle=\frac{1}{\sqrt{1-c^2}}\langle X, P_ix\rangle$
and
$$\langle X, P_ix\rangle=\langle \mathcal{P}X, \mathcal{P}P_ix\rangle=\langle X, -P_i\mathcal{P}x+2\langle P_ix, x\rangle x\rangle=-\langle X, P_ix\rangle,$$
where the identity $\mathcal{P}P_i=-P_i\mathcal{P}+2\langle P_ix, x\rangle I_{2l}$ has been used. Hence, $X\in T_x\mathcal{V}_c$. Then we will show that $A_{\eta}X=0$.
Observe that in this case
\begin{eqnarray}
A_{\eta}X=0&\Leftrightarrow&(P_iX)^T=cX\nonumber\\
&\Leftrightarrow&P_iX-cX\in \{\varsigma\in E_-(\mathcal{P})~|~\varsigma \bot Qx, \forall Q\in
\Sigma(P_0,\cdots,P_i), \langle Q, \mathcal{P}\rangle=0\}.\nonumber
\end{eqnarray}
It is sufficient to prove
$$P_iX-cX\in \{\varsigma\in E_-(\mathcal{P})~|~\varsigma \bot Qx, \forall Q\in
\Sigma(P_0,\cdots,P_i), \langle Q, \mathcal{P}\rangle=0\}.$$
First, it is not difficult to prove that
$$\mathcal{P}(P_iX-cX)=-(P_iX-cX) \Leftrightarrow \mathcal{P}X=X,$$
and therefore $P_iX-cX \in E_-(\mathcal{P})$.
Next, for $Q\in\Sigma(P_0,\cdots,P_i)$ with $\langle Q, \mathcal{P}\rangle=0$, we have
$\langle P_iX-cX, Qx\rangle=\langle X, P_iQx\rangle-c\langle X, Qx\rangle$ and
$\langle X, Qx\rangle=\langle\mathcal{P}X, \mathcal{P}Qx\rangle=-\langle X, Q\mathcal{P}x\rangle=
-\langle X, Qx\rangle$. It follows that
$\langle P_iX-cX, Qx\rangle=\langle X, P_iQx\rangle=0$ by the definition of $X$.
In a word, $A_{\eta}X=0$ and
$$\{X\in E_+(\mathcal{P})|\langle X,x\rangle=0, \langle X, P_iRx\rangle=0, \forall R\in \Sigma(P_0,\cdots,P_i), \langle R, \mathcal{P}\rangle=0\} \subset T_0(\eta).$$
Moreover, we claim that if $X\in E_+(\mathcal{P})$, and $\langle X, P_iRx\rangle=0$ for any $R\in \Sigma(P_0,\cdots,P_i)$ with $\langle R, \mathcal{P}\rangle=0$,
then $\langle X, x \rangle=0$. To prove the claim, define $a_0=\langle P_i, \mathcal{P}\rangle$. Then $|a_0|<1$, and $P_i-a_0\mathcal{P}\in \Sigma(P_0,\cdots,P_i)$
with $\langle P_i-a_0\mathcal{P}, \mathcal{P}\rangle=0$.
Hence, $\langle X, P_i(P_i-a_0\mathcal{P})x\rangle=\langle X, x\rangle-a_0\langle X, P_ix\rangle=0$.
Observing that $$\langle X, P_ix\rangle=\langle \mathcal{P}X, \mathcal{P}P_ix\rangle=\langle X, (-P_i\mathcal{P}+2\langle P_ix, x\rangle I_{2l})x\rangle=
-\langle X, P_ix\rangle+2\langle P_ix, x\rangle\langle X, x\rangle,$$
we see $\langle X, P_ix\rangle=c\langle X, x\rangle.$
And thus $\langle X, x\rangle-a_0c\langle X, x\rangle=0$. Since $|a_0|, |c|<1$, it follows $\langle X, x\rangle=0$.
Due to the claim above, it follows that
$$\mathrm{dim}\{X\in E_+(\mathcal{P})|\langle X,x\rangle=0, \langle X, P_iRx\rangle=0, \forall R\in \Sigma(P_0,\cdots,P_i), \langle R, \mathcal{P}\rangle=0\}\geq l-i.$$

Since three mutually orthogonal subspaces of $T_x\mathcal{V}_c$ are constructed and the sum of the dimensions
is no less than $l-i+2(i-1)=\mathrm{dim} T_x\mathcal{V}_c$, part (2) of the lemma follows.
\end{proof}

\begin{lem}\label{focal}
For $c=\pm 1$, $\mathcal{U}_{\pm 1}=E_{\pm}(P_{i+1})\cap S^{2l-1}(1)$, denoted by $SE_{\pm}(P_{i+1})$, and
the two focal submanifolds $\mathcal{U}_{\pm 1}$ are both
isometric to $S^{l-1}(1)$ and are totally geodesic in $M_i$. Similarly,
$\mathcal{V}_{\pm 1}=E_{\pm}(P_{i})\cap S^{2l-1}(1)$, denoted by $SE_{\pm}(P_{i})$, and $\mathcal{V}_{\pm 1}$ are both
isometric to $S^{l-1}(1)$ and are totally geodesic in $N_i$.
\end{lem}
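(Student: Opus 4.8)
The plan is to identify the focal sets $\mathcal{U}_{\pm1}$ and $\mathcal{V}_{\pm1}$ explicitly as great subspheres of $S^{2l-1}(1)$, and then read off both the isometry type and the total geodesy. I begin with $\mathcal{U}_{\pm1}=f_i^{-1}(\pm1)$. Since $P_{i+1}$ is a symmetric involution with $\mathrm{Tr}\,P_{i+1}=0$, the eigenspaces $E_{\pm}(P_{i+1})$ are mutually orthogonal and $l$-dimensional, and for a unit vector $x=x_++x_-$ with $x_\pm\in E_\pm(P_{i+1})$ we have $f_i(x)=\langle P_{i+1}x,x\rangle=|x_+|^2-|x_-|^2$, which under $|x_+|^2+|x_-|^2=1$ equals $+1$ exactly when $x\in E_+(P_{i+1})$ and $-1$ exactly when $x\in E_-(P_{i+1})$. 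It then remains to check that the equations $\langle P_\alpha x,x\rangle=0$ $(0\le\alpha\le i)$ cutting out $M_i$ are automatic on $E_\pm(P_{i+1})\cap S^{2l-1}(1)$: if $P_{i+1}x=\pm x$, then using that $P_{i+1}$ is symmetric and anticommutes with each $P_\alpha$ $(\alpha\le i)$, $\langle P_\alpha x,x\rangle=\langle P_\alpha P_{i+1}x,P_{i+1}x\rangle=\langle P_{i+1}P_\alpha P_{i+1}x,x\rangle=-\langle P_\alpha x,x\rangle$, so it vanishes. Hence $\mathcal{U}_{\pm1}=E_\pm(P_{i+1})\cap S^{2l-1}(1)=SE_\pm(P_{i+1})$, the unit sphere of an $l$-dimensional linear subspace, which is isometric to $S^{l-1}(1)$.

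For the total geodesy I would use the elementary fact about a tower $A\subset B\subset C$ of Riemannian submanifolds: a curve lying in $B$ that is a geodesic of $C$ is automatically a geodesic of $B$, since decomposing $\nabla^C_{\dot\gamma}\dot\gamma=0$ into its $TB$-component and its normal-to-$B$ component forces each to vanish. Now $SE_\pm(P_{i+1})$ is a great subsphere of $S^{2l-1}(1)$, so each of its geodesics is a great circle of $S^{2l-1}(1)$; since such a great circle lies inside $M_i$ (because $SE_\pm(P_{i+1})\subset M_i$ by the first step), it is a geodesic of $M_i$. Thus every geodesic of $\mathcal{U}_{\pm1}$ is a geodesic of $M_i$, i.e. $\mathcal{U}_{\pm1}$ is totally geodesic in $M_i$. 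Equivalently, one may invoke the additivity of second fundamental forms along $\mathcal{U}_{\pm1}\subset M_i\subset S^{2l-1}(1)$, which splits orthogonally and hence forces the $M_i$-component to vanish.

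The treatment of $\mathcal{V}_{\pm1}=g_i^{-1}(\pm1)$ is entirely parallel with $P_i$ in place of $P_{i+1}$. For a unit vector $x$, $g_i(x)=\langle P_ix,x\rangle=\pm1$ forces $x\in E_\pm(P_i)$ as above; and when $P_ix=\pm x$ the same symmetric-anticommuting computation gives $\langle P_\alpha x,x\rangle=0$ for $0\le\alpha\le i-1$, so that $\sum_{\alpha=0}^i\langle P_\alpha x,x\rangle^2=1$ and $x\in N_i$. Hence $\mathcal{V}_{\pm1}=E_\pm(P_i)\cap S^{2l-1}(1)=SE_\pm(P_i)\cong S^{l-1}(1)$, and the nesting argument $SE_\pm(P_i)\subset N_i\subset S^{2l-1}(1)$ shows $\mathcal{V}_{\pm1}$ is totally geodesic in $N_i$.

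I do not expect a genuine obstacle in this lemma: the only points requiring a moment of care are verifying that the critical value $\pm1$ is attained precisely on the eigenspace of the relevant $P$ and that the remaining defining equations of $M_i$ (resp. $N_i$) become redundant there — both immediate consequences of the Clifford relations — together with recalling the standard fact that a totally geodesic submanifold of $S^{2l-1}(1)$ contained in an intermediate submanifold is totally geodesic in that intermediate submanifold.
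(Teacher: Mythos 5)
Your proof is correct and follows essentially the same route as the paper: identify $\mathcal{U}_{\pm1}$ (resp.\ $\mathcal{V}_{\pm1}$) with $SE_{\pm}(P_{i+1})$ (resp.\ $SE_{\pm}(P_i)$) via the extremality of $\langle P_{i+1}x,x\rangle$ on the unit sphere, check via the anticommutation relations that the remaining defining equations of $M_i$ (resp.\ $N_i$) are then automatic, and deduce total geodesy from the nesting of great subspheres. The paper compresses this to a reference to Cauchy--Schwarz and the Clifford relations; you have merely unpacked those steps.
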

\begin{proof}
Because $\mathcal{U}_{\pm 1}=\{x\in S^{2l-1}(1)~|~\langle P_0x, x\rangle=\cdots=
\langle P_ix, x\rangle=0, \langle P_{i+1}x, x \rangle=\pm 1\}$ by definition, it follows from
Cauchy-Schwarz inequality and the properties
of the symmetric Clifford system that
$\mathcal{U}_{\pm 1}=\{x\in S^{2l-1}(1)~|~P_{i+1}x=\pm x\}$.
Thus, the first part of the lemma is proved. And an analogous argument implies the second part
of the lemma.
\end{proof}
We are now in the position to give a

\noindent\textbf{Proof of Theorem \ref{filtration}:}
\begin{proof}
In order to complete the proof, it remains to show that $M_{i+j}$ is minimal in $M_i$, and $N_i$
is minimal in $N_{i+j}$, by putting Lemmas \ref{isoparametric}-\ref{focal} together. For $M_{i+j}\subset
M_i\subset S^{2l-1}(1)$, $M_{i+j}$ is minimal in $M_i$ indeed, since $M_{i+j}$ is minimal
in $S^{2l-1}(1)$ (c.f. \cite{Mu80}). Similarly, $N_i$ is also minimal in $N_{i+j}$. Now, the proof
of Theorem \ref{filtration} is complete.
\end{proof}
After finishing the proof of Theorem \ref{filtration}, we will continue to study the normal exponential map
of isoparametric hypersurfaces constructed above to prepare for the next section.

We first consider the isoparametric function $f_i$ on $M_i$. In this case, $M_{i+1}$ is the minimal
isoparametric hypersurface in $M_i$. At any $x\in M_{i+1}$, the unit normal $\xi(x)=P_{i+1}x$.
Define a map $\phi_t : \mathcal{U}_{0}=M_{i+1}\rightarrow M_i$ by $\phi_t(x)=\cos{t}x+\sin{t}\xi$.
In fact, it is not difficult to check that $\phi_t(x)\in M_i$, i.e.,
$\langle P_{\alpha}\phi_t(x), \phi_t(x)\rangle=0$ for any $0\leq\alpha \leq i$, and hence the map $\phi_t$ is well-defined.
Furthermore, by a direct computation, we can infer that $f_i(\phi_t(x))=\sin{2t}$ and $\phi_t(x)\in \mathcal{U}_{\sin{2t}}$. For simplicity, we denote $\phi_t(x)$ by $x_t$.

For the case of the isoparametric function $g_i$ on $N_i$, $N_{i-1}$ is the minimal isoparametric hypersurface in $N_i$. At any point $x\in N_{i-1}$, the unit normal $\eta(x)=P_ix$. Define a map $\psi_t : \mathcal{V}_{0}=N_{i-1}\rightarrow N_i$ by $\psi_t(x)=\cos{t}x+\sin{t}\eta$. And it is also not difficult to check that $\psi_t(x)\in N_i$, and $g_i(\psi_t(x))\in \mathcal{V}_{\sin{2t}}$. With no possibility of confusion, we also denote $\psi_t(x)$ by $x_t$.

The following properties of maps $\phi_t$ and $\psi_t$ will be useful later.
\begin{prop}\label{tangent map}
(1). The map $\phi_t$ is the normal exponential map of $M_{i+1}$ in $M_i$. For each $x\in M_{i+1}$,
the tangent map $(\phi_t)_*: T_xM_{i+1}\rightarrow T_{x_t}M_i$ is given by

a). For $X\in T_{-1}(\xi)$, $(\phi_t)_*(X)=(\cos{t}+\sin{t})X$;

b). For $X\in T_{1}(\xi)$, $(\phi_t)_*(X)=(\cos{t}-\sin{t})X$;

c). For $X\in T_{0}(\xi)$, i.e., $X=QP_{i+1}x$, for some $Q\in \mathrm{Span}\{P_0, \cdots, P_{i}\}$, $(\phi_t)_*(X)=Q(-\sin{t}x+\cos{t}P_{i+1}x)$. In particular, $|(\phi_t)_*(X)|^2=|X|^2$.

(2). The map $\psi_t$ is the normal exponential map of $N_{i-1}$ in $N_i$. For each $x\in N_{i-1}$,
the tangent map $(\psi_t)_*: T_xN_{i-1}\rightarrow T_{x_t}N_i$ is given by

a). For $X\in T_{-1}(\eta)$, $(\psi_t)_*(X)=(\cos{t}+\sin{t})X$;

b). For $X\in T_{1}(\eta)$, $(\psi_t)_*(X)=(\cos{t}-\sin{t})X$;

c). For $X\in T_{0}(\eta)$, $(\psi_t)_*(X)=\cos{t}X+\sin{t}P_{i}X$ and $|(\psi_t)_*(X)|^2=|X|^2$.
\end{prop}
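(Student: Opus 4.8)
The plan is to prove the two parts of Proposition \ref{tangent map} in parallel, since the structure is identical, and to reduce everything to a direct differentiation of the explicit formula $x_t = \phi_t(x) = \cos t\, x + \sin t\, \xi(x)$ (resp. $\psi_t(x) = \cos t\, x + \sin t\, \eta(x)$). First I would record that $\xi(x) = P_{i+1}x$ is the globally defined unit normal of $M_{i+1}$ in $M_i$ (from Lemma \ref{principal curvature} and Lemma \ref{focal}, $M_{i+1} = \mathcal{U}_0$ and $\xi = \nabla f_i/|\nabla f_i|$ specializes at $c=0$ to $P_{i+1}x$); the analogous statement $\eta(x) = P_i x$ holds on $N_{i-1} = \mathcal{V}_0$. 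That $\phi_t$ is the normal exponential map is then immediate: for fixed $x$, the curve $t\mapsto \cos t\, x + \sin t\, \xi(x)$ is a great circle in $S^{2l-1}(1)$ with initial velocity $\xi(x)$, it stays in $M_i$ by the computation already carried out in the text (the conditions $\langle P_\alpha x_t, x_t\rangle = 0$ for $0\le\alpha\le i$ follow from $P_\alpha P_{i+1} + P_{i+1}P_\alpha = 0$), and since $M_i$ is totally geodesic nowhere but the great circle leaving $M_{i+1}$ normally in the ambient sphere actually lies in $M_i$, it is a geodesic of $M_i$; hence $\phi_t = \exp^\perp_{t\xi}$.

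The core computation is the tangent map. Given $X \in T_x M_{i+1}$, extend it to a curve $s\mapsto \gamma(s)$ in $M_{i+1}$ with $\gamma(0)=x$, $\gamma'(0)=X$, and differentiate $\phi_t(\gamma(s))$ at $s=0$, obtaining $(\phi_t)_*(X) = \cos t\, X + \sin t\, \tfrac{d}{ds}\big|_0 \xi(\gamma(s)) = \cos t\, X + \sin t\, P_{i+1}X$, because $\xi(\gamma(s)) = P_{i+1}\gamma(s)$ and $P_{i+1}$ is a constant linear map. So the entire proposition reduces to evaluating $P_{i+1}X$ on each of the three principal distributions listed in Lemma \ref{principal curvature}(1) (at $c=0$). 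For $X \in T_{-1}(\xi) = E_+(P_{i+1})\cap T_xM_{i+1}$ we get $P_{i+1}X = X$, so $(\phi_t)_*(X) = (\cos t + \sin t)X$; for $X\in T_1(\xi) = E_-(P_{i+1})\cap T_xM_{i+1}$ we get $P_{i+1}X = -X$, so $(\phi_t)_*(X) = (\cos t - \sin t)X$. For $X \in T_0(\xi)$, written as $X = Q\xi = QP_{i+1}x$ with $Q \in \mathrm{Span}\{P_0,\dots,P_i\}$ (a unit Clifford element, so $Q^2 = I$ and $QP_{i+1} = -P_{i+1}Q$), one computes $P_{i+1}X = P_{i+1}QP_{i+1}x = -Q P_{i+1}^2 x = -Qx$, whence $(\phi_t)_*(X) = \cos t\, QP_{i+1}x - \sin t\, Qx = Q(-\sin t\, x + \cos t\, P_{i+1}x)$, matching (c). Since $Q$ is orthogonal on $\R^{2l}$ and $\{x, P_{i+1}x\}$ is orthonormal, $|(\phi_t)_*(X)|^2 = \sin^2 t + \cos^2 t = |X|^2$ (using $|X|^2 = |Qx|^2 = 1$ after normalization, or more carefully tracking the norm of $Q$). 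Part (2) is word-for-word the same with $P_i$, $\eta$, $N_{i-1}\subset N_i$, and the principal spaces from Lemma \ref{principal curvature}(2) at $c=0$; the only extra care is that in the $T_0(\eta)$ case one does not have a clean eigenvalue, so one leaves the answer as $\cos t\, X + \sin t\, P_iX$ and must still verify $|(\psi_t)_*(X)|^2 = |X|^2$, which follows because $X \perp P_i X$ (indeed $\langle X, P_iX\rangle = \langle P_iX, X\rangle$ is symmetric, and one shows it vanishes using $\mathcal{P}X = X$ together with the defining orthogonality conditions of $T_0(\eta)$, exactly the identity $\langle X, P_i x\rangle$-type manipulation already used in the proof of Lemma \ref{principal curvature}).

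The only genuine subtlety — and the step I would be most careful about — is the claim that the three explicitly described subspaces are exactly the principal spaces and that $(\phi_t)_*$ is injective on all of $T_xM_{i+1}$ (the "submersive"/immersion part implicit in calling $\phi_t$ the normal exponential map and in Theorem \ref{eigenmap}). This is already settled by Lemma \ref{principal curvature}: the three subspaces are mutually orthogonal with dimensions summing to $\dim T_xM_{i+1}$, so they give the full tangent space, and the computation above shows $(\phi_t)_*$ acts on them by the scalars $\cos t + \sin t$, $\cos t - \sin t$, and by a norm-preserving map respectively, none of which degenerates except at the focal parameters $t = \pm\pi/4$ (where $\cos t \mp \sin t$ vanishes) — precisely the values excluded when $\phi_t$ is a diffeomorphism onto a parallel hypersurface and attained when $\phi_t$ becomes the focal map onto $\mathcal{U}_{\pm1}$. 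So I would close by remarking that for $t \ne \pm\pi/4$ the map $(\phi_t)_*$ is an isomorphism, confirming $\phi_t$ is an embedding of the parallel hypersurface, while at $t=\pm\pi/4$ the kernel is exactly $T_{\mp1}(\xi)$, consistent with the fibration in Corollary \ref{fibration}; the same remarks apply verbatim to $\psi_t$.
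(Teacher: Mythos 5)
Your proof is correct and follows essentially the same route as the paper: identify the curve $t\mapsto\cos t\,x+\sin t\,\xi(x)$ as a great circle of $S^{2l-1}(1)$ lying in $M_i$ (hence a geodesic of $M_i$), differentiate to get $(\phi_t)_*(X)=\cos t\,X+\sin t\,P_{i+1}X$, and evaluate on the three principal spaces from Lemma \ref{principal curvature}. The paper leaves the final evaluation and the orthogonality check $\langle X,P_iX\rangle=0$ in case (2c) implicit ("follows immediately"), whereas you supply these details correctly.
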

\begin{proof}
As the proof of part (2) is similar to that of part (1), we only give the proof of part (1).
Recall that the normal exponential map of $M_{i+1} \subset M_i$ is given by
$exp: M_{i+1}\times \mathbb{R}\rightarrow M_i, exp(x, t)=exp_x(t\xi)$, which
is the restriction of the exponential map exp of $M_i$ to the normal bundle
of $M_{i+1}$. Observing that the curve $\phi_t(x)=\cos{t}x+\sin{t}\xi$\quad is a geodesic
in $S^{2l-1}(1)$ issuing from
$x$ with initial vector $\xi(x)$ and $\phi_t(x)\in M_i\subset S^{2l-1}(1)$,
we obtain that $exp(x, t)=\phi_t(x)$. Therefore, $\phi_t$ is exactly
the normal exponential map of $M_{i+1}$ in $M_i$. Next, by definition of the tangent map,
for each $X\in T_xM_{i+1}$, $(\phi_t)_*(X)=\cos{t}X+\sin{t}P_{i+1}X$. Then part (1) of Proposition 2.1 follows
immediately.
\end{proof}
Based on Proposition \ref{tangent map}, we have the following remark.
\begin{rem}
If $\sin{2t}\neq \pm 1$, the maps $\phi_t$ and $\psi_t$ are essentially diffeomorphisms from
$M_{i+1}$ to the parallel hypersurface $\mathcal{U}_{\sin{2t}}$ and from $N_{i-1}$ to the parallel
hypersurface $\mathcal{V}_{\sin{2t}}$, respectively.
If $\sin{2t}= \pm 1$, the maps $\phi_t$ and $\psi_t$ (focal maps) are essentially submersions from $M_{i+1}$ to $\mathcal{U}_{\pm 1}$
and from $N_{i-1}$ to $\mathcal{V}_{\pm 1}$, respectively. However, due to Proposition \ref{tangent map},
they are not Riemannian submersions.
\end{rem}
We will conclude this section by the following result, which gives the geometry properties of the fibers
of the submersions mentioned in Remark 2.1.

\begin{prop}\label{fiber}
(1). For $t=\pm \frac{\pi}{4}$, the maps $\phi_{\pm \frac{\pi}{4}}:M_{i+1}\rightarrow\mathcal{U}_{\pm 1}$ are given by
$\phi_{\pm \frac{\pi}{4}}(x)=\frac{1}{\sqrt{2}}(x\pm P_{i+1}x)$ for $x\in M_{i+1}$. For any $y\in \mathcal{U}_1$,
the fiber $F_y=\phi_{\frac{\pi}{4}}^{-1}(y)$
is a totally geodesic submanifold in $M_{i+1}$, and is isometric to $S^{l-i-2}(\frac{1}{\sqrt{2}})$.
For any $y'\in \mathcal{U}_{-1}$, the fiber $F'_{y'}=\phi_{-\frac{\pi}{4}}^{-1}(y')$ is also a totally geodesic
submanifold in $M_{i+1}$, and is isometric to $S^{l-i-2}(\frac{1}{\sqrt{2}})$.

(2). For $t=\pm \frac{\pi}{4}$, the maps $\psi_{\pm \frac{\pi}{4}}:N_{i-1}\rightarrow\mathcal{V}_{\pm 1}$ are given by
$\psi_{\pm \frac{\pi}{4}}(x)=\frac{1}{\sqrt{2}}(x\pm P_{i}x)$ for $x\in N_{i-1}$. For any $y\in \mathcal{V}_1$,
the fiber $F_y=\psi_{\frac{\pi}{4}}^{-1}(y)$
is a totally geodesic submanifold in $N_{i-1}$, and is isometric to $S^{i-1}(\frac{1}{\sqrt{2}})$. For any $y'\in \mathcal{V}_{-1}$, the fiber $F'_{y'}=\psi_{-\frac{\pi}{4}}^{-1}(y')$ is also a totally geodesic
submanifold in $N_{i-1}$, and is isometric to $S^{i-1}(\frac{1}{\sqrt{2}})$.
\end{prop}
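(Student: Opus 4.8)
The plan is to treat parts (1) and (2) in parallel by two moves: first compute each fiber explicitly and recognise it as a round sphere lying in an affine subspace of $\mathbb{R}^{2l}$, and then deduce total geodesy from a one‑line comparison of second fundamental forms. For part (1) I would start from the observation that, writing $x=x_++x_-$ with $x_\pm\in E_\pm(P_{i+1})$, one has $P_{i+1}x=x_+-x_-$, so that $\phi_{\pi/4}(x)=\tfrac1{\sqrt2}(x+P_{i+1}x)=\sqrt2\,x_+$; i.e. $\phi_{\pi/4}$ is $\sqrt2$ times orthogonal projection onto $E_+(P_{i+1})$ (it does land in $\mathcal U_1=SE_+(P_{i+1})$ because $\langle P_{i+1}x,x\rangle=0$ on $M_{i+1}$ forces $|x_+|^2=|x_-|^2=\tfrac12$). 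Fixing $y\in\mathcal U_1$, the equation $\phi_{\pi/4}(x)=y$ therefore pins down $x_+=\tfrac1{\sqrt2}y$, so I write $x=\tfrac1{\sqrt2}y+v$ with $v\in E_-(P_{i+1})$ and impose the defining equations of $M_{i+1}$ on $v$: the condition $|x|=1$ gives $|v|^2=\tfrac12$; the condition $\langle P_{i+1}x,x\rangle=0$ is automatic; and for $0\le\alpha\le i$, using that $P_\alpha$ anticommutes with $P_{i+1}$ (so $P_\alpha$ interchanges $E_\pm(P_{i+1})$) a short computation gives $\langle P_\alpha x,x\rangle=\sqrt2\,\langle v,P_\alpha y\rangle$, whence the condition reads $v\perp P_\alpha y$. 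Since $\langle P_\alpha y,P_\beta y\rangle=\langle P_\beta P_\alpha y,y\rangle=\delta_{\alpha\beta}$ for $0\le\alpha,\beta\le i$ (a product of two distinct anticommuting involutions is skew‑symmetric), the vectors $P_0y,\dots,P_iy$ form an orthonormal $(i+1)$‑frame inside the $l$‑dimensional space $E_-(P_{i+1})$, and I conclude
\[
F_y=\Big\{\tfrac1{\sqrt2}y+v\ \Big|\ v\in V_y,\ |v|=\tfrac1{\sqrt2}\Big\},\qquad
V_y:=E_-(P_{i+1})\cap\{P_0y,\dots,P_iy\}^{\perp},\quad\mathrm{dim}\,V_y=l-i-1,
\]
which is precisely the round sphere of radius $\tfrac1{\sqrt2}$ in the affine subspace $\tfrac1{\sqrt2}y+V_y$, so $F_y\cong S^{l-i-2}(\tfrac1{\sqrt2})$ with the induced metric.

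For total geodesy I would argue as follows. As a metric sphere of radius $\tfrac1{\sqrt2}$ centred at $\tfrac1{\sqrt2}y$ inside a flat affine subspace of $\mathbb{R}^{2l}$, the fiber $F_y$ has second fundamental form in $\mathbb{R}^{2l}$ given by $\mathrm{II}^{\mathbb{R}^{2l}}_{F_y}(X,Y)=-2\langle X,Y\rangle\,v$, where $v=x-\tfrac1{\sqrt2}y=\tfrac12(x-P_{i+1}x)$. The second fundamental form of $F_y$ inside $M_{i+1}$ is the orthogonal projection of $\mathrm{II}^{\mathbb{R}^{2l}}_{F_y}(X,Y)$ onto $T_xM_{i+1}$; but $v\in\mathrm{Span}\{x,P_{i+1}x\}$, which is contained in $\nu_xM_{i+1}=\mathrm{Span}\{x,P_0x,\dots,P_{i+1}x\}$, so this projection vanishes and $F_y$ is totally geodesic in $M_{i+1}$. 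The fiber $F'_{y'}=\phi_{-\pi/4}^{-1}(y')$ over $y'\in\mathcal U_{-1}=SE_-(P_{i+1})$ is handled verbatim, interchanging the roles of $E_+(P_{i+1})$ and $E_-(P_{i+1})$; there $v=\tfrac12(x+P_{i+1}x)$ is again normal to $M_{i+1}$.

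For part (2) I would run the same scheme with $P_i$, the pair $N_{i-1}\subset N_i$ and $\psi_{\pm\pi/4}$ replacing $P_{i+1}$, $M_{i+1}\subset M_i$ and $\phi_{\pm\pi/4}$; recall the unit normal of $N_{i-1}$ in $N_i$ at $x$ is $\eta=P_ix$, so $\psi_{\pi/4}(x)=\tfrac1{\sqrt2}(x+P_ix)=\sqrt2\,x_+$ with $x_\pm\in E_\pm(P_i)$, and fixing $y\in\mathcal V_1=SE_+(P_i)$ forces $x=\tfrac1{\sqrt2}y+v$ with $v\in E_-(P_i)$, $|v|^2=\tfrac12$. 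The only genuinely new point is that $N_{i-1}$ is cut out not by linear equations but by the single quadratic relation $\sum_{\alpha=0}^{i-1}\langle P_\alpha x,x\rangle^2=1$; since $\langle P_\alpha x,x\rangle=\sqrt2\,\langle v,P_\alpha y\rangle$ exactly as before, this becomes $\sum_{\alpha=0}^{i-1}\langle v,P_\alpha y\rangle^2=\tfrac12=|v|^2$. Because $P_0y,\dots,P_{i-1}y$ are orthonormal in $E_-(P_i)$, this is the equality case of Bessel's inequality, forcing $v\in\mathrm{Span}\{P_0y,\dots,P_{i-1}y\}$, and conversely every $v$ of norm $\tfrac1{\sqrt2}$ in this $i$‑dimensional span gives a point of $N_{i-1}$ over $y$. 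Hence $F_y$ is the round sphere of radius $\tfrac1{\sqrt2}$ in that span, i.e. $F_y\cong S^{i-1}(\tfrac1{\sqrt2})$, and total geodesy follows exactly as in part (1), since $v=\tfrac12(x-P_ix)\in\mathrm{Span}\{x,P_ix\}\subset\nu_xN_{i-1}$ (both $x$ and $\eta=P_ix$ are normal to $N_{i-1}$ in $\mathbb{R}^{2l}$). The fiber $F'_{y'}$ over $y'\in\mathcal V_{-1}$ is symmetric.

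The Clifford identities $P_\alpha P_\beta+P_\beta P_\alpha=2\delta_{\alpha\beta}I$ and the descriptions of $\nu_xM_{i+1}$, $\nu_xN_{i-1}$ recalled above do all the routine work; the structural fact that makes total geodesy fall out for free is that the radial vector of the fiber sphere, $x-\tfrac1{\sqrt2}y=\tfrac12(x\mp P_\bullet x)$, always lies in the normal space of the ambient hypersurface. I expect the only step needing real care to be the rigidity argument in part (2): recognising that the quadratic constraint defining $N_{i-1}$, together with $|v|^2=\tfrac12$, is precisely the saturated Bessel inequality, which is what collapses the fiber from a quadric slice down to a linear one.
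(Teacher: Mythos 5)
Your proposal is correct and follows essentially the same route as the paper: both identify each fiber explicitly as a round sphere of radius $\frac{1}{\sqrt{2}}$ sitting in an affine subspace (your Bessel-equality argument in part (2) just fills in a step the paper dismisses as ``straightforward to verify''), and both derive total geodesy from the fact that the radial/acceleration vector $\frac{1}{2}(x\mp P_{\bullet}x)$ lies in $\mathrm{Span}\{x,P_{\bullet}x\}$, hence in the normal space of $M_{i+1}$ (resp. $N_{i-1}$). The only cosmetic difference is that you phrase this via a comparison of second fundamental forms, whereas the paper parametrizes great circles $c(t)=\frac{1}{\sqrt{2}}(y+\cos t\,z+\sin t\,v)$ and checks directly that $\bar{\nabla}_{c'}c'$ is normal to the ambient hypersurface.
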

\begin{proof}
(1). Given $y\in\mathcal{U}_1$, it is straightforward to verify
$$F_y=\phi_{\frac{\pi}{4}}^{-1}(y)=\{\frac{1}{\sqrt{2}}(y+z)~|~z\in SE_-(P_{i+1}), \langle z, P_0y\rangle=
\cdots=\langle z, P_iy\rangle=0\}.$$
Consequently, it is not difficult to see that $F_y$ is isometric to $S^{l-i-2}(\frac{1}{\sqrt{2}})$.
Hence, the left task for us is to show that $F_y$ is totally geodesic in $M_{i+1}$.
Denote the connections of $M_{i+1}$, $S^{2l-1}(1)$ and $\mathbb{R}^{2l}$ respectively by $\nabla$, $\bar{\nabla}$ and
$\tilde{\nabla}$. For each $x\in F_y$, $x=\frac{1}{\sqrt{2}}(y+z)$ for some $z\in SE_-(P_{i+1})$ with
$\langle z, P_0y\rangle=\cdots=\langle z, P_iy\rangle=0$. Since $P_{i+1}y=y$, it is clear that
$P_0y, \cdots, P_{i}y\in E_-(P_{i+1})$. Choose $v\in T_x(F_y)$, then $v\in E_-(P_{i+1})$, $\langle v, z\rangle=0$ and
$\langle v, P_0y\rangle=\cdots=\langle v, P_iy\rangle=0$.
Define $c(t)=\frac{1}{\sqrt{2}}(y+\cos{t}z+\sin{t}v)$. Then $c(t)$ is a geodesic in $F_y$ with $c(0)=x$ and $c'(0)=\frac{1}{\sqrt{2}}v$.
By a direct computation,
\begin{eqnarray}
\bar{\nabla}_{c'(t)}c'(t)&=&\tilde{\nabla}_{c'(t)}c'(t)-\langle\tilde{\nabla}_{c'(t)}c'(t), c(t)\rangle c(t)\nonumber\\
&=&-\frac{1}{\sqrt{2}}(\cos{t}z+\sin{t}v)-\langle\tilde{\nabla}_{c'(t)}c'(t), c(t)\rangle c(t)\nonumber\\
&=&\frac{1}{2\sqrt{2}}(y-\cos{t}z-\sin{t}v).\nonumber
\end{eqnarray}
Since $\frac{1}{\sqrt{2}}(y-\cos{t}z-\sin{t}v)=P_{i+1}c(t)$ is a normal vector to $M_{i+1}$,
we have $\nabla_{c'(t)}c'(t)=0$. And it means that $c(t)$ is a geodesic in $M_{i+1}$. Therefore,
it follows that $F_y$ is totally geodesic in $M_{i+1}$.

Similarly, for $y'\in\mathcal{U}_{-1}$, it follows
$$F'_{y'}=\phi_{-\frac{\pi}{4}}^{-1}(y')=\{\frac{1}{\sqrt{2}}(y'+z)~|~z\in SE_+(P_{i+1}), \langle z, P_0y'\rangle=
\cdots=\langle z, P_iy'\rangle=0\},$$
and $F'_{y'}$ is totally geodesic in $M_{i+1}$, but the detailed proof is omitted here.

(2). For $y\in \mathcal{V}_{1}$, the fiber is given by
$$F_y=\psi_{\frac{\pi}{4}}^{-1}(y)=\{\frac{1}{\sqrt{2}}(y+z)~|~z\in \mathrm{Span}\{P_0y, \cdots, P_{i-1}y\}\},$$
and thus $F_y$ is isometric to $S^{i-1}(\frac{1}{\sqrt{2}})$. Next, we will show $F_y$ is totally geodesic in $N_{i-1}$.
Choose any $x\in F_y$ and $v\in T_{x}F_y$, then $x=\frac{1}{\sqrt{2}}(y+z)$ for some $z\in\mathrm{Span}\{P_0y,
\cdots, P_{i-1}y\}$, and $v\in \mathrm{Span}\{P_0y, \cdots, P_{i-1}y\}$ with $\langle v, z\rangle=0$.
Now, we can define $c(t)=\frac{1}{\sqrt{2}}(y+\cos{t}z+\sin{t}v)$. Clearly, $c(t)$ is a geodesic in $F_y$ with
$c(0)=x$ and $c'(0)=\frac{1}{\sqrt{2}}v$. It follows that
\begin{eqnarray}
\bar{\nabla}_{c'(t)}c'(t)&=&\tilde{\nabla}_{c'(t)}c'(t)-\langle\tilde{\nabla}_{c'(t)}c'(t), c(t)\rangle c(t)\nonumber\\
&=&-\frac{1}{\sqrt{2}}(\cos{t}z+\sin{t}v)-\langle\tilde{\nabla}_{c'(t)}c'(t), c(t)\rangle c(t)\nonumber\\
&=&\frac{1}{2\sqrt{2}}(y-\cos{t}z-\sin{t}v),\nonumber
\end{eqnarray}
where the connections of $N_{i-1}$, $S^{2l-1}(1)$ and $\mathbb{R}^{2l}$ are denoted by $\nabla$, $\bar{\nabla}$ and
$\tilde{\nabla}$, respectively. And then $\nabla_{c'(t)}c'(t)=0$, because $\frac{1}{\sqrt{2}}(y-\cos{t}z-\sin{t}v)=P_{i}c(t)$
is a normal vector to $N_{i-1}$. That is to say, $c(t)$ is a geodesic in $N_{i-1}$. Hence, $F_y$ is totally
geodesic in $N_{i-1}$.

For the case $t=-\frac{\pi}{4}$, the proof is analogous to the above and we will not go into the details.
In fact, for $y'\in\mathcal{V}_{-1}$, the fiber is given by
$$F'_{y'}=\psi_{-\frac{\pi}{4}}^{-1}(y')=\{\frac{1}{\sqrt{2}}(y'+z)~|~z\in \mathrm{Span}\{P_0y', \cdots, P_{i-1}y'\}\},$$
and $F'_{y'}$ is totally geodesic in $N_{i-1}$.
\end{proof}
\begin{rem}
Corollary \ref{fibration} follows from Theorem \ref{filtration} and Proposition \ref{fiber}.
\end{rem}

\section{Eigenvalue estimates}
Based on the isoparametric foliations constructed in Theorem \ref{filtration}, we intend to
prove Theorem \ref{eigenvalue} on eigenvalues estimate of the Laplacian in this section. We first
recall a crucial theorem
which has been used in \cite{Mu88}, \cite{TY13} and \cite{TXY14}.

\noindent \textbf{Theorem (Chavel and Feldman \cite{CF78}, Ozawa
\cite{Oz81})}\,\, {\itshape Let $V$ be a closed, connected smooth
Riemannian manifold and $W$ a closed submanifold of $V$. For any
sufficiently small $\varepsilon>0$, set $W(\varepsilon)=\{x\in V:
dist(x, W)<\varepsilon\}$. Let $\lambda^D_k(\varepsilon)$ $(k=1, 2,
\cdots)$ be the $k$-th eigenvalue of the Laplace-Beltrami operator on
$V-W(\varepsilon)$ under the Dirichlet boundary condition. If $\dim
V\geq \dim W+ 2$, then for any $k =0, 1, \cdots$
\begin{equation*}\label{lambda k limit}
\lim_{\varepsilon\to 0}\lambda^D_{k+1}(\varepsilon)=\lambda_{k}(V).
\end{equation*}}

It is necessary to point out that the
proof of part (2) of Theorem \ref{eigenvalue} is analogous to that of part (1), so
the detailed proof will be only given for part (1).

\noindent\textbf{Proof of Theorem \ref{eigenvalue} (1):}
\begin{proof}
a). Consider the isoparametric foliation on $M_i$ given by the function $f_i$, provided that
$0\leq i\leq m-1$ and $l-i-3>0$.
For sufficiently small $\varepsilon>0$, set
$$\mathcal{U}(\varepsilon)=\bigcup_{t\in[-\frac{\pi}{4}+\varepsilon, \frac{\pi}{4}-\varepsilon]}
\mathcal{U}_{\sin{2t}}.$$
Actually, $\mathcal{U}(\varepsilon)$ is a domain of $M_i$ obtained by excluding $\varepsilon$-neighborhoods
of $\mathcal{U}_1$ and $\mathcal{U}_{-1}$.
Thus, by the theorem of Chavel-Feldman and Ozawa,
$$\lim_{\varepsilon\to 0}\lambda^D_{k+1}(\mathcal{U}(\varepsilon))=\lambda_{k}(M_i).$$
Next we will estimate $\lambda^D_{k+1}(\mathcal{U}(\varepsilon))$ from above in terms of $\lambda_{k}(M_{i+1})$
by making use of the mini-max principle.

According to part (1) of Proposition \ref{tangent map}, the volume element of $\mathcal{U}(\varepsilon)$
can be expressed by the volume element of $M_{i+1}$ as
$$d\mathcal{U}(\varepsilon)=(\cos{t}+\sin{t})^{l-i-2}(\cos{t}-\sin{t})^{l-i-2}dtdM_{i+1}=(\cos{2t})^{l-i-2}dtdM_{i+1}.$$

Let $h$ be a nonnegative, increasing smooth function on $[0, \infty)$ satisfying $h=1$ on $[2, \infty)$ and
$h=0$ on $[0, 1]$. For sufficiently small $\varrho>0$, define a nonnegative smooth function $\Psi_{\varrho}$
on $[-\frac{\pi}{2}, \frac{\pi}{2}]$ by

(i) $\Psi_{\varrho}(x)=1$ on $[-\frac{\pi}{2}+2\varrho, \frac{\pi}{2}-2\varrho]$,

(ii) $\Psi_{\varrho}(x)=h(\frac{\frac{\pi}{2}-x}{\varrho})$ on $[\frac{\pi}{2}-2\varrho, \frac{\pi}{2}]$,

(iii) $\Psi_{\varrho}$ is symmetric with respect to $x=0$.

\noindent Then $|\Psi_{\varrho}'(x)|\leq \frac{1}{\varrho}C$ for $x\in [-\frac{\pi}{2}, \frac{\pi}{2}]$,
where $C=\sup~\{h'(x)| x\in [0, \infty)\}$.

Let $\varphi_k$ be the $k$-th eigenfunctions on $M_{i+1}$ which are orthogonal to each other with respect to
the square integral inner product on $M_{i+1}$ and $L_{k+1}=\mathrm{Span}\{\varphi_0, \varphi_1, \cdots, \varphi_k\}$.
For each fixed $t\in [-\frac{\pi}{4}+\varepsilon, \frac{\pi}{4}-\varepsilon]$, denote $\pi=\pi_t=\phi_t^{-1}: \mathcal{U}_{\sin2t}\rightarrow M_{i+1}$.
Given any $\varphi\in L_{k+1}$, we can define
a function $\Phi_{\varepsilon}$ on $\mathcal{U}(\varepsilon)$ by
$\Phi_{\varepsilon}(x)=\Psi_{2\varepsilon}(2t)(\varphi\circ \pi)(x)$, where $t$ is determined by $x\in \mathcal{U}_{\sin{2t}}$ and
$t\in [-\frac{\pi}{4}+\varepsilon, \frac{\pi}{4}-\varepsilon]$.
It is clear that $\Phi_{\varepsilon}$ is a smooth function on $\mathcal{U}(\varepsilon)$ satisfying
the Dirichlet boundary condition.

By the mini-max principle, we can infer that
$$\lambda^D_{k+1}(\mathcal{U}(\varepsilon))\leq \sup_{\varphi\in L_{k+1}}\frac{||\nabla\Phi_{\varepsilon}||^2_2}{||\Phi_{\varepsilon}||^2_2}.$$
Now, we will estimate the term $\frac{||\nabla\Phi_{\varepsilon}||^2_2}{||\Phi_{\varepsilon}||^2_2}$.
Since the normal geodesic starting from $M_{i+1}$ is perpendicular to any parallel hypersurface $\mathcal{U}_c$,
it follows that
$$||\nabla \Phi_{\varepsilon}||^2_2=\int_{\mathcal{U}(\varepsilon)}4(\Psi_{2\varepsilon}'(2t))^2(\varphi\circ\pi)^2
d\mathcal{U}(\varepsilon)+\int_{\mathcal{U}(\varepsilon)}(\Psi_{2\varepsilon}(2t))^2|\nabla(\varphi\circ\pi)|^2
d\mathcal{U}(\varepsilon).$$
Moreover,
\begin{eqnarray}
||\Phi_{\varepsilon}||^2_2&=&\int_{\mathcal{U}(\varepsilon)}\Psi_{2\varepsilon}^2(2t)(\varphi\circ\pi)^2
d\mathcal{U}(\varepsilon)\nonumber\\
&=&\int_{-\frac{\pi}{4}+\varepsilon}^{\frac{\pi}{4}+\epsilon}\int_{M_{i+1}}\Psi_{2\varepsilon}^2(2t)
(\cos{2t})^{l-i-2}\varphi^2dtdM_{i+1}\nonumber\\
&=&\frac{||\varphi||^2_2}{2}\int_{-\frac{\pi}{2}+2\varepsilon}^{\frac{\pi}{2}-2\varepsilon}
\Psi_{2\varepsilon}^2(\tau)(\cos{\tau})^{l-i-2}d\tau,\nonumber
\end{eqnarray}
and thus,
$$\frac{||\nabla\Phi_{\varepsilon}||^2_2}{||\Phi_{\varepsilon}||^2_2}=I(\varepsilon)+II(\varepsilon),$$
where
\begin{eqnarray}
I(\varepsilon)&=&\frac{\int_{\mathcal{U}(\varepsilon)}4(\Psi_{2\varepsilon}'(2t))^2(\varphi\circ\pi)^2
d\mathcal{U}(\varepsilon)}{\int_{\mathcal{U}(\varepsilon)}\Psi_{2\varepsilon}^2(2t)(\varphi\circ\pi)^2
d\mathcal{U}(\varepsilon)}\nonumber\\
&=&\frac{4\int_{-\frac{\pi}{2}+2\varepsilon}^{\frac{\pi}{2}-2\varepsilon}(\Psi_{2\varepsilon}'(\tau))^2
(\cos{\tau})^{l-i-2}d\tau}{\int_{-\frac{\pi}{2}+2\varepsilon}^{\frac{\pi}{2}-2\varepsilon}
\Psi_{2\varepsilon}^2(\tau)(\cos{\tau})^{l-i-2}d\tau},\nonumber
\end{eqnarray}
and
$$II(\varepsilon)=\frac{\int_{\mathcal{U}(\varepsilon)}(\Psi_{2\varepsilon}(2t))^2|\nabla(\varphi\circ\pi)|^2
d\mathcal{U}(\varepsilon)}{\int_{\mathcal{U}(\varepsilon)}\Psi_{2\varepsilon}^2(2t)(\varphi\circ\pi)^2
d\mathcal{U}(\varepsilon)}.$$
Observing that if $l-i-3>0$,
\begin{eqnarray}
&&\int_{-\frac{\pi}{2}+2\varepsilon}^{\frac{\pi}{2}-2\varepsilon}(\Psi_{2\varepsilon}'(\tau))^2
(\cos{\tau})^{l-i-2}d\tau\nonumber\\
&\leq&\int_{-\frac{\pi}{2}+2\varepsilon}^{-\frac{\pi}{2}+4\varepsilon}
(\Psi_{2\varepsilon}'(\tau))^2
(\cos{\tau})^{l-i-2}d\tau+\int_{\frac{\pi}{2}-4\varepsilon}^{\frac{\pi}{2}-2\varepsilon}
(\Psi_{2\varepsilon}'(\tau))^2
(\cos{\tau})^{l-i-2}d\tau\nonumber\\
&\leq& \int_{-\frac{\pi}{2}+2\varepsilon}^{-\frac{\pi}{2}+4\varepsilon}
\frac{C^2}{4{\varepsilon}^2}\cos^2{\tau}d\tau+\int_{\frac{\pi}{2}-4\varepsilon}^{\frac{\pi}{2}-2\varepsilon}
\frac{C^2}{4{\varepsilon}^2}\cos^2{\tau}d\tau,\nonumber
\end{eqnarray}
we deduce $\lim_{\varepsilon\rightarrow 0}I(\varepsilon)=0$.

It remains to consider the term $II(\varepsilon)$.
Decompose
$$\nabla\varphi=Z_1+Z_2+Z_3\in T_{-1}(\xi)\oplus T_1(\xi) \oplus T_0(\xi)=TM_{i+1}.$$
By definition, $\langle \nabla(\varphi\circ\pi), X\rangle=\langle \nabla\varphi,
\pi_{*}X\rangle$ for $X\in T\mathcal{U}_{\sin{2t}}$. From Proposition \ref{tangent map},
$$|\nabla(\varphi\circ\pi)|^2=\frac{1}{\kappa_1^2}|Z_1|^2+ \frac{1}{\kappa_2^2}|Z_2|^2+
\frac{1}{\kappa_3^2}|Z_3|^2,$$
where $\kappa_1=\cos{t}+\sin{t}$, $\kappa_1=\cos{t}-\sin{t}$, and $\kappa_3=1$.
Define
$$K_1=\int_{-\frac{\pi}{4}}^{\frac{\pi}{4}}\frac{(\cos{2t})^{l-i-2}}{\kappa_1^2}dt,
K_2=\int_{-\frac{\pi}{4}}^{\frac{\pi}{4}}\frac{(\cos{2t})^{l-i-2}}{\kappa_2^2}dt,
K_3=G=\int_{-\frac{\pi}{4}}^{\frac{\pi}{4}}(\cos{2t})^{l-i-2}dt,$$
and $K=\max\{K_1, K_2, K_3\}$.
Then
$$\lim_{\varepsilon\rightarrow 0}II(\varepsilon)=\frac{\sum_{\alpha=1}^3K_{\alpha}||Z_{\alpha}||^2_2}
{||\varphi||^2_2G}\leq \frac{K}{G}\frac{||\nabla\varphi||^2_2}{||\varphi||^2_2}.$$
Furthermore,
$$\lambda_{k}(M_i)=\lim_{\varepsilon\rightarrow 0}\lambda^D_{k+1}(\mathcal{U}(\varepsilon))\leq \lim_{\varepsilon\rightarrow 0}\sup_{\varphi\in L_{k+1}}\frac{||\nabla\Phi_{\varepsilon}||^2_2}{||\Phi_{\varepsilon}||^2_2}\leq\frac{K}{G}\lambda_{k}(M_{i+1}).$$
A direct computation yields
$$\frac{K}{G}=\frac{K_1}{G}=\frac{l-i-2}{l-i-3},$$
and the inequality a) of Theorem \ref{eigenvalue} (1) follows.

b). According to Proposition \ref{tangent map} and \ref{fiber}, the map $\phi_{\frac{\pi}{4}}: M_{i+1}\rightarrow\mathcal{U}_{1}$ is a
smooth submersion (but not a Riemannian submersion), and for any $y\in \mathcal{U}_1$,
the fiber $F_y=\phi_{\frac{\pi}{4}}^{-1}(y)$, isometric to $S^{l-i-2}(\frac{1}{\sqrt{2}})$,
is a totally geodesic submanifold in $M_{i+1}$. Moreover, For each $y\in\mathcal{U}_1$, at
a point $x\in \phi_{\frac{\pi}{4}}^{-1}(y)$, we have a decomposition $T_xM_{i+1}=T_{-1}(\xi)\oplus T_{1}(\xi)\oplus T_{0}(\xi)$, and

$(\phi_{\frac{\pi}{4}})_*(X)=(\cos{\frac{\pi}{4}}+\sin{\frac{\pi}{4}})X=\sqrt{2}X$, for $X\in T_{-1}(\xi)$;

$(\phi_{\frac{\pi}{4}})_*(X)=(\cos{\frac{\pi}{4}}-\sin{\frac{\pi}{4}})X=0$, for $X\in T_{1}(\xi)$;

$(\phi_{\frac{\pi}{4}})_*(X)=Q(-\sin{\frac{\pi}{4}}x+\cos{\frac{\pi}{4}}P_{i+1}x)$, for $X\in T_{0}(\xi)$, i.e.,
$X=QP_{i+1}x$ with $Q\in \mathrm{Span}\{P_0, \cdots, P_{i}\}$. In particular, $|(\phi_{\frac{\pi}{4}})_*(X)|^2=|X|^2$.

Using these facts, we will show
the inequality b) in Theorem \ref{eigenvalue} (1) as follows. Let $\varphi_k$ be the $k$-th
eigenfunctions on $\mathcal{U}_{1}$ which are orthogonal to each other with respect to
the square integral inner product on $\mathcal{U}_{1}$ and $L_{k+1}=\mathrm{Span}\{\varphi_0, \varphi_1, \cdots, \varphi_k\}$. For any $\varphi\in L_{k+1}$, define a function $\Phi$ on $M_{i+1}$ by
$\Phi(x)=(\varphi\circ\phi_{\frac{\pi}{4}})(x).$ By the min-max principle again, we get
$$\lambda_k(M_{i+1})\leq\sup_{\varphi\in L_{k+1}}\frac{||\nabla \Phi||^2_2}{||\Phi||^2_2}=\sup_{\varphi\in L_{k+1}}\frac{\int_{M_{i+1}}|\nabla\Phi|^2dM_{i+1}}{\int_{M_{i+1}}\Phi^2dM_{i+1}}.$$
Hence, the term $|\nabla\Phi|^2$ has to be estimated. In fact, by the properties of
$(\phi_{\frac{\pi}{4}})_*$ described above, it follows that $|\nabla\Phi|^2_x\leq 2|\nabla\varphi|^2_y$.
Then
\begin{eqnarray}
\int_{M_{i+1}}|\nabla\Phi|^2dM_{i+1}&=&
\int_{M_{i+1}}\frac{|\nabla\Phi|^2}{(\sqrt{2})^{l-i-2}}(\phi_{\frac{\pi}{4}})^*(d\mathcal{U}_1)
dS^{l-i-2}(\frac{1}{\sqrt{2}})\nonumber\\
&\leq&\int_{M_{i+1}}\frac{2|\nabla\varphi|^2}{(\sqrt{2})^{l-i-2}}(\phi_{\frac{\pi}{4}})^*(d\mathcal{U}_1)
dS^{l-i-2}(\frac{1}{\sqrt{2}})\nonumber\\
&=&\frac{2}{(\sqrt{2})^{l-i-2}}Vol(S^{l-i-2}(\frac{1}{\sqrt{2}}))\int_{\mathcal{U}_1}
|\nabla\varphi|^2d\mathcal{U}_1,\nonumber
\end{eqnarray}
and
\begin{eqnarray}
\int_{M_{i+1}}\Phi^2dM_{i+1}&=&
\int_{M_{i+1}}\frac{\Phi^2}{(\sqrt{2})^{l-i-2}}(\phi_{\frac{\pi}{4}})^*(d\mathcal{U}_1)
dS^{l-i-2}(\frac{1}{\sqrt{2}})\nonumber\\
&=&\int_{M_{i+1}}\frac{\varphi^2}{(\sqrt{2})^{l-i-2}}(\phi_{\frac{\pi}{4}})^*(d\mathcal{U}_1)
dS^{l-i-2}(\frac{1}{\sqrt{2}})\nonumber\\
&=&\frac{1}{(\sqrt{2})^{l-i-2}}Vol(S^{l-i-2}(\frac{1}{\sqrt{2}}))\int_{\mathcal{U}_1}
\varphi^2d\mathcal{U}_1.\nonumber
\end{eqnarray}
Therefore,
$$\lambda_k(M_{i+1})\leq\sup_{\varphi\in L_{k+1}}\frac{||\nabla \Phi||^2_2}{||\Phi||^2_2}\leq
2\sup_{\varphi\in L_{k+1}}\frac{||\nabla \varphi||^2_2}{||\varphi||^2_2}=2\lambda_{k}(\mathcal{U}_1)=2\lambda_{k}(S^{l-1}(1))$$
as required.
\end{proof}

\begin{rem}
(1). According to \cite{TY13}, $\lambda_k(M_i)\geq\frac{l-i-2}{l-1}\lambda_k(S^{2l-1}(1))$,
for $i\geq2$. Combining the inequality a) of Theorem \ref{eigenvalue} (1) with the inequality
of \cite{TY13} for the $i$ case, we can infer that
$\lambda_k(M_{i+1})\geq\frac{l-i-3}{l-1}\lambda_k(S^{2l-1}(1))$, which is the inequality
of \cite{TY13} for the $i+1$ case.

(2). For the isoparametric foliation on $M_i$ determined by $f_i$, as in Section 3 of
\cite{TY13}, we can also obtain the inequality $\lambda_{k}(M_i)\leq \frac{2(l-i-2)}{l-i-3}
\lambda_{k}(S^{l-1}(1))$, which is a consequence of the inequalities a) and b) of
Theorem \ref{eigenvalue} (1).

(3). Using the method of the proof for the inequality b) of Theorem \ref{eigenvalue} (1),
for a minimal isoparametric hypersurface $M$ in the unit sphere with $g=4$, multiplicities
$(m_1, m_2)$ and focal submanifolds $M_+$ and $M_-$ of codimension $m_1+1$ and $m_2+1$ respectively,
we can show that $\lambda_k(M)\leq \frac{m_1+m_2}{m_2}\lambda_k(M_+)$ and
$\lambda_k(M)\leq \frac{m_1+m_2}{m_1}\lambda_k(M_-)$.

(4). The remarks above on the isoparametric function $f_i$ on $M_i$ are
also available for the case of the isoparametric function $g_i$ on $N_i$.
\end{rem}

Next, let us focus on eigenvalue estimates in a specific case.
As is well known, for $g=4$, $(m_1,m_2)=(4,3)$, there are exactly two non-congruent families (one is homogenous
and the other is not) of isoparametric hypersurfaces of OT-FKM type. For the
homogeneous case, Tang, Xie and Yan \cite{TXY14} determined the first eigenvalue
of the focal submanifold $M_+^{10}$, that is, $\lambda_1(M_+^{10})=10$. However,
for the inhomogeneous case, the corresponding work is still open. To study the spectrum of the focal
submanifold in this case, we establish the following result.

\begin{prop}
Let $\{P_0,\cdots,P_4\}$ on $\mathbb{R}^{16}$ be a symmetric Clifford system.

1). For the case $P_0\cdots P_4=\pm I_{16}$, the corresponding isoparametric foliation is
homogenous and the focal submanifold $M_+^{10}$ is isometric to $\mathrm{Sp}(2)$ with certain
bi-invariant metric. In particular, $\lambda_{17}(M_+^{10})=16$.

2). For the case $P_0\cdots P_4\neq\pm I_{16}$, the corresponding isoparametric foliation is
inhomogeneous and the focal submanifold $\tilde{M}_+^{10}$ is only diffeomorphic
to $S^3\times S^7$, but not isometric to the product of two round spheres.
Moreover, $\lambda_{17}(\tilde{M}_+^{10})\leq 12$.
\end{prop}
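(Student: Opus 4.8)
The plan is to treat the two cases in parallel, exploiting in both that a focal submanifold of an isoparametric hypersurface is minimal in the ambient sphere. By Takahashi's theorem the $16$ Euclidean coordinate functions of $M_+^{10}\subset S^{15}(1)\subset\mathbb{R}^{16}$ are then eigenfunctions of $\triangle$ with eigenvalue $\dim M_+^{10}=10$; since the focal submanifold is full in $\mathbb{R}^{16}$ they are linearly independent, so together with the constants they span an $17$-dimensional space on which the Rayleigh quotient is identically $10$. Hence $\lambda_{16}(M_+^{10})\le 10$ in all cases (and $\lambda_1=\cdots=\lambda_{16}=10$ whenever $10$ is the first nonzero eigenvalue), and the entire statement reduces to controlling the eigenvalue immediately above $10$.

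\emph{Part 1.} When $P_0\cdots P_4=\pm I_{16}$ the Clifford module is $W_+\oplus W_+$ (equivalently $W_-\oplus W_-$), so the family is the homogeneous one (Ferus--Karcher--M\"unzner, Takagi--Takahashi). I would realise $\{P_0,\dots,P_4\}$ concretely by quaternionic $2\times2$ matrices so as to identify $M_+^{10}$ with $\mathrm{Sp}(2)\subset\mathbb{H}^{2\times2}\cong\mathbb{R}^{16}$, embedded in $S^{15}(1)$ via a rescaling of the Frobenius norm; the induced metric is then the restriction of the $\mathrm{Ad}$-invariant Frobenius metric and hence bi-invariant (one may instead simply cite this identification, cf.\ \cite{TXY14}). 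Under it the $16$ coordinate functions are precisely the matrix coefficients of the standard $4$-dimensional representation of $\mathrm{Sp}(2)$, which via $\mathrm{Sp}(2)=\mathrm{Spin}(5)$ is the spin representation; so by Peter--Weyl and minimality this representation contributes the eigenvalue $10$ with multiplicity $4^2=16$, and $\lambda_1(M_+^{10})=10$. Running the Freudenthal/Casimir computation for the remaining small representations (normalised so that the spin representation gives $10$), the next eigenvalue comes from the $5$-dimensional vector representation of $SO(5)$ and equals $16$, with multiplicity $5^2=25$. Thus $\lambda_1=\cdots=\lambda_{16}=10$ and $\lambda_{17}=\cdots=\lambda_{41}=16$, i.e.\ $\lambda_{17}(M_+^{10})=16$. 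The delicate points are pinning down the normalisation of the bi-invariant metric and checking that no other irreducible representation (e.g.\ $2\varpi_2$, $\varpi_1+\varpi_2$, $2\varpi_1$) has Casimir value lying strictly between those of the spin and the vector representations, so that $16$ really is the \emph{second} distinct eigenvalue.

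\emph{Part 2.} When $P_0\cdots P_4\ne\pm I_{16}$ the module is $W_+\oplus W_-$, the family is inhomogeneous, and $\Omega:=P_0\cdots P_4$ is a symmetric involution with $\pm1$-eigenspaces of dimension $8$. I would first record that the fibration $S^3\to\tilde M_+^{10}\xrightarrow{\phi_{\pi/4}}\mathcal U_1\cong S^7$ of Corollary \ref{fibration} is, in this case, the trivial bundle, so $\tilde M_+^{10}$ is diffeomorphic to $S^3\times S^7$ (this is where the splitting $\mathbb{R}^{16}=W_+\oplus W_-$ enters; alternatively one cites \cite{Wa88} or the topological analysis of Qian--Tang). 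For the spectral bound I would again invoke the min--max principle with the $17$-dimensional space spanned by the constants and the $16$ coordinate functions and adjoin one more test function $h$, $L^2$-orthogonal to all of them, with $\int_{\tilde M_+}|\nabla h|^2\le 12\int_{\tilde M_+}h^2$. Orthogonality to the coordinate functions comes for free if $h$ is even (quadratic), since $\tilde M_+$ is antipodally symmetric; and because the coordinate functions are eigenfunctions with eigenvalue $10\le 12$ the cross terms vanish and the Rayleigh quotient on the resulting $18$-dimensional space is $\le\max(10,12)=12$, giving $\lambda_{17}(\tilde M_+^{10})\le 12$. The natural candidate for $h$ is built from the Clifford data adapted to $W_+\oplus W_-$, or from the totally geodesic fibre $S^3(1/\sqrt2)$ of $\phi_{\pi/4}$ (first eigenvalue $6$, and the failure of $\phi_{\pi/4}$ to be a Riemannian submersion contributes a factor $2$, giving $2\cdot6=12$). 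Carrying out the estimate $\|\nabla h\|^2\le 12\|h\|^2$ on the actual (non-product) metric is the real obstacle and is the step I expect to be hardest; it requires the explicit second fundamental form of $\tilde M_+^{10}$ in $S^{15}(1)$, as computed in Lemma \ref{principal curvature} and Proposition \ref{tangent map}.

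It then remains to deduce the non-isometry assertions by comparison. If $\tilde M_+^{10}$ were isometric to a round product $S^3(a)\times S^7(b)$, then, being minimal in $S^{15}(1)$, it would have to be (up to the Clifford rescaling $a^2=3/10$, $b^2=7/10$) the generalized Clifford torus, whose spectrum begins $0,\ 10\ (\text{multiplicity }12),\ 20\ (\text{multiplicity }32),\dots$, so that $\lambda_{17}=20>12\ge\lambda_{17}(\tilde M_+^{10})$, a contradiction. Likewise $\lambda_{17}(\tilde M_+^{10})\le 12<16=\lambda_{17}(M_+^{10})$ shows the focal submanifolds in the two cases are not isometric. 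Hence $\tilde M_+^{10}$ is only diffeomorphic, never isometric, to a product of two round spheres.
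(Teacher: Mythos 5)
Your Part 1 is essentially the paper's proof: it realizes the homogeneous Clifford system by quaternionic $2\times 2$ matrices, constructs the explicit diffeomorphism $G:M_+^{10}\to\mathrm{Sp}(2)$ and checks it is an isometry for a suitably normalized bi-invariant metric, then cites Beers--Millman and Fegan for the spectrum of $\mathrm{Sp}(2)$. Your Casimir/Peter--Weyl sketch is just what those references carry out; the normalization issue you flag is indeed what the paper pins down by fixing the length of the tangent vector $\bigl(\begin{smallmatrix}\sqrt{2}\mathrm{i} & 0\\ 0 & 0\end{smallmatrix}\bigr)$.

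In Part 2 there is a genuine gap. You correctly reduce the estimate $\lambda_{17}(\tilde M_+^{10})\le 12$ to producing an extra quadratic test function $h$, $L^2$-orthogonal to the $16$ coordinate functions, with Rayleigh quotient at most $12$, and you correctly note the orthogonality comes from antipodal symmetry and that cross-terms vanish. But you never produce $h$, and you yourself call the estimate on the non-product metric ``the step I expect to be hardest.'' The paper closes exactly this gap: from the explicit diffeomorphism $F:\tilde M_+^{10}\to S^7(1/\sqrt2)\times S^3(1)$, $F(u_1,u_2;v_1,v_2)=(u_1,u_2;2(\overline{u_2}v_1-\overline{v_2}u_1))$, it pulls back the four coordinate functions of the $S^3(1)$ factor, for instance $\varphi(u_1,u_2;v_1,v_2)=\langle u_1,v_2\rangle-\langle u_2,v_1\rangle$, and verifies by a direct computation that $\triangle\varphi=-12\varphi$ on $\tilde M_+^{10}$. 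So the paper produces genuine eigenfunctions at $12$, not merely test functions with a Rayleigh-quotient bound; this is the step your proposal leaves open. A secondary issue: your non-isometry argument assumes that an isometric copy of $S^3(a)\times S^7(b)$ sitting minimally in $S^{15}(1)$ must be a generalized Clifford torus, but the codimension here is $5$, not $1$, so that implication does not hold; the paper only records the weaker observation that the specific diffeomorphism $F$ fails to be an isometry.
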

\begin{proof}
1). Up to orthogonal transformations, the symmetric Clifford system in this case can
be chosen as follows. First, using the multiplication of quaternions, we can
define three orthogonal transformations $E_1, E_2,
E_3$ on $\mathbb{R}^8=\mathbb{H}\oplus\mathbb{H}$, where any point in $\mathbb{R}^8$
is considered as two quaternions. For $u=(u_1, u_2)\in \mathbb{H}^2$,
\begin{eqnarray}
E_1(u)&=&(\mi u_1,\mi u_2),\nonumber\\
E_2(u)&=&(\mj u_1,\mj u_2),\nonumber\\
E_3(u)&=&(\mk u_1,\mk u_2).\nonumber
\end{eqnarray}
Furthermore, by identifying $\mathbb{R}^{16}$ with $\mathbb{H}^4$, we can define
\begin{eqnarray}
P_0(u, v)&=&(u, -v),\nonumber\\
P_1(u, v)&=&(v, u),\nonumber\\
P_2(u, v)&=&(E_1v, -E_1u),\nonumber\\
P_3(u, v)&=&(E_2v, -E_2u),\nonumber\\
P_4(u, v)&=&(E_3v, -E_3u),\nonumber
\end{eqnarray}
for $(u,v)=(u_1, u_2; v_1, v_2)\in \mathbb{H}^4=\mathbb{R}^{16}$.
Clearly, $P_0P_1\cdots P_4=\mathrm{Id}$. According to \cite{FKM81}, the corresponding
isoparametric foliation is homogeneous, with one of the focal submanifolds
$M^{10}_+=\{x\in S^{15}(1)|\langle
P_{\alpha}x, x\rangle=0, 0\leq\alpha\leq 4\}$.
It is not difficult to show $x=(u, v)=(u_1, u_2; v_1, v_2)\in M^{10}_+$ if and only if
$$|u|=|v|=\frac{1}{\sqrt{2}},\quad u_1\overline{v_1}+u_2\overline{v_2}=0.$$
Thus we can define a map $G: M_+^{10}\rightarrow \mathrm{Sp}(2)$ by
$G(u_1, u_2; v_1, v_2)=\sqrt{2}
\left(  \begin{array}{cc}
u_1 & u_2  \\
v_1 & v_2  \\
\end{array}\right)$. It is evident that $G$ is a diffeomorphism. Furthermore, it is an isometry if $\mathrm{Sp}(2)$ is equipped
with the bi-invariant metric normalized such that the tangent vector
$\left( \begin{array}{cc}
\sqrt{2}\mi & 0  \\
0 & 0  \\
\end{array}\right)\in T_{I}\mathrm{Sp}(2)$ has unit length. For this metric, the spectrum of $\mathrm{Sp}(2)$ can be determined completely (c.f.
\cite{BM77} and \cite{Fe80}). Particularly, $\lambda_{17}(M_+^{10})=16$.

2). Also using the multiplication of quaternions, we can
define another three orthogonal transformations $E_1, E_2,
E_3$ on $\mathbb{R}^8=\mathbb{H}\oplus\mathbb{H}$. For $u=(u_1, u_2)\in \mathbb{H}^2$,
\begin{eqnarray}
E_1(u)&=&(\mi u_1,-\mi u_2),\nonumber\\
E_2(u)&=&(\mj u_1,-\mj u_2),\nonumber\\
E_3(u)&=&(\mk u_1,-\mk u_2).\nonumber
\end{eqnarray}
And similarly define
\begin{eqnarray}
P_0(u, v)&=&(u, -v),\nonumber\\
P_1(u, v)&=&(v, u),\nonumber\\
P_2(u, v)&=&(E_1v, -E_1u),\nonumber\\
P_3(u, v)&=&(E_2v, -E_2u),\nonumber\\
P_4(u, v)&=&(E_3v, -E_3u),\nonumber
\end{eqnarray}
for $(u,v)=(u_1, u_2; v_1, v_2)\in \mathbb{H}^4=\mathbb{R}^{16}$.
In this case, $P_0P_1\cdots P_4\neq\pm\mathrm{Id}$. According to \cite{FKM81},
the corresponding isoparametric foliation is inhomogeneous. Now,
$x=(u, v)=(u_1, u_2; v_1, v_2)\in\tilde{M}^{10}_+$ (one of the focal submanifolds) if and only if
$$|u|=|v|=\frac{1}{\sqrt{2}}, \quad v_1\overline{u_1}+u_2\overline{v_2}=0.$$
Moreover, an explicit diffeomorphism
$F$ from $\tilde{M}^{10}_+$ to $S^{7}(\frac{1}{\sqrt{2}})\times S^3(1)$
can be constructed by
$$F(u_1, u_2; v_1, v_2)=(u_1, u_2; 2(\overline{u_2}v_1-\overline{v_2}u_1)).$$
Observe that $F$ is not an isometry from $\tilde{M}^{10}_+$ to $S^{7}(\frac{1}{\sqrt{2}})\times S^3(1)$
with the standard product metric.
In light of the diffeomorphism $F$, it is not difficult to show the four coordinate components of
$S^3(1)$, the second factor of $S^{7}(\frac{1}{\sqrt{2}})\times S^3(1)$, provide $4$ eigenfunctions on
$\tilde{ M}^{10}_+$ with the same eigenvalue
$12$. For instance, given $\Phi :\mathbb{R}^{16}\rightarrow\mathbb{R}$ by
$\Phi(u_1,u_2;v_1,v_2)=\langle u_1, v_2\rangle-\langle u_2, v_1\rangle$,
and $\varphi :\tilde{M}_+^{10}\rightarrow\mathbb{R}$ by $\varphi:=\Phi|_{\tilde{M}_+^{10}}$, then
$\triangle \varphi=-12\varphi$ by a direct computation.
On the other hand,
since $\tilde{ M}^{10}_+$ is minimal in $S^{15}(1)$, the $\mathbb{R}^{16}$-components
give 16 eigenfunctions with the same eigenvalue 10. These arguments imply that
$\lambda_{17}(\tilde{M}_+^{10})\leq 12$ as required.
\end{proof}
\begin{rem}
From the view point of representation theory, it is worth mentioning that
the symmetric Clifford system $\{P_0,\cdots,P_4\}$ on $\mathbb{R}^{16}$ with
$P_0\cdots P_4=\pm I_{16}$ cannot be extended. However, for the symmetric Clifford
system $\{P_0,\cdots,P_4\}$ on $\mathbb{R}^{16}$ with
$P_0\cdots P_4\neq\pm I_{16}$, it can be extend to a symmetric Clifford system
$\{P_0,\cdots,P_4,P_5\}$ on $\mathbb{R}^{16}$ indeed.
\end{rem}

\section{Isoparametric foliation and harmonic map}
This section will be concerned with harmonic maps and their energy-stability via isoparametric focal maps. We will prove Theorem \ref{eigenmap} and Proposition \ref{focal map}
on harmonic maps, and then investigate the stability of these harmonic maps.
For convenience, we begin with recalling the following basic fact.
\begin{lem}\cite{EL78}\label{tension field}
{\itshape For Riemannian manifolds $M$, $N$ and $P$, let $f$ be a smooth map from $M$ to $N$,
and $i$ an isometric immersion from $N$ into $P$. Define $F=i\circ f: M \rightarrow P$.
Then $f$ is harmonic if and only if the tension field of $F$ is
normal to $N$.}
\end{lem}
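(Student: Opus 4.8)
The plan is to compute the tension field of the composition $F = i \circ f$ in terms of the tension field of $f$ and the second fundamental form of the immersion $i$, using the chain rule (composition formula) for tension fields. Recall that for a smooth map $h\colon (M,g)\to(N,h_N)$, the second fundamental form of $h$ is the section $\nabla dh$ of $T^*M\otimes T^*M\otimes h^{-1}TN$, and the tension field is its trace, $\tau(h)=\operatorname{tr}_g\nabla dh$. First I would recall the standard composition formula: for $f\colon M\to N$ and $i\colon N\to P$,
\begin{equation*}
\nabla d(i\circ f)(X,Y) = di\bigl(\nabla df(X,Y)\bigr) + \nabla di\bigl(df(X),df(Y)\bigr),
\end{equation*}
for all $X,Y\in T_xM$. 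Tracing over an orthonormal frame $\{e_a\}$ of $T_xM$ with respect to $g$ gives
\begin{equation*}
\tau(F) = di\bigl(\tau(f)\bigr) + \sum_a \nabla di\bigl(df(e_a),df(e_a)\bigr).
\end{equation*}

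The second step is to observe that since $i$ is an isometric immersion, $\nabla di$ is precisely (the pullback under $f$ of) the second fundamental form $B^i$ of $N$ in $P$, which is a symmetric bilinear form taking values in the normal bundle $\nu(N\subset P)$. Hence the term $\sum_a \nabla di(df(e_a),df(e_a)) = \sum_a B^i(df(e_a),df(e_a))$ is a vector field along $f$ that is everywhere \emph{normal} to $N$ in $P$. On the other hand, $di(\tau(f))$ is everywhere \emph{tangent} to $N$ (it is the image under $di$ of a vector tangent to $N$). Therefore the decomposition
\begin{equation*}
\tau(F) = \underbrace{di(\tau(f))}_{\text{tangent to }N} + \underbrace{\sum_a B^i(df(e_a),df(e_a))}_{\text{normal to }N}
\end{equation*}
is exactly the decomposition of $\tau(F)$ into its components tangent and normal to $N$.

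The conclusion is then immediate: $\tau(F)$ is normal to $N$ if and only if its tangential component $di(\tau(f))$ vanishes; and since $di$ is injective (as $i$ is an immersion), this holds if and only if $\tau(f)=0$, i.e. $f$ is harmonic. I do not anticipate a genuine obstacle here — the only point requiring care is to cite or briefly justify the composition formula for $\nabla d(i\circ f)$ and the identification of $\nabla di$ with the second fundamental form of the isometric immersion $i$; both are classical and are exactly the content of \cite{EL78}, so in the write-up I would simply invoke them.
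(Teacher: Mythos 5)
Your proof is correct, and it is the standard argument: the composition formula
$\nabla d(i\circ f)(X,Y) = di(\nabla df(X,Y)) + \nabla di(df X, df Y)$, the identification of $\nabla di$ with the second fundamental form of the isometric immersion $i$, and the resulting tangential/normal decomposition of $\tau(F)$. Since the paper simply cites this lemma from Eells--Lemaire without reproducing a proof, there is nothing in the paper to compare against, but your argument is exactly the classical one the citation points to, and the logic (injectivity of $di$ to pass from $di(\tau(f))=0$ to $\tau(f)=0$) is sound.
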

We are now ready to prove Theorem \ref{eigenmap}.

\noindent\textbf{Proof of Theorem \ref{eigenmap}:}
\begin{proof}
We only consider part (1) of this theorem. Let $\{P_0,\cdots,P_m\}$ be a symmetric Clifford system
on $\mathbb{R}^{2l}$. For $0\leq i \leq m-1$, the focal maps $\phi_{\pm \frac{\pi}{4}}: M_{i+1} \rightarrow
\mathcal{U}_{\pm 1}=SE_{\pm}(P_{i+1})$ given by
$$\phi_{\pm \frac{\pi}{4}}(x)=\frac{1}{\sqrt{2}}(x\pm P_{i+1}x),~x \in M_{i+1}$$
are smooth submersions, due to Proposition \ref{tangent map}. Since $M_{i+1}$ is
minimal in $S^{2l-1}(1)$, by using Takahashi Theorem, we get
$\triangle x=-(2l-i-3)x$. It follows that
$$\triangle \Phi_{\pm}=-(2l-i-3)\Phi_{\pm},$$
where $\Phi_{\pm}=i_{\pm}\circ \phi_{\pm\frac{\pi}{4}}$ and
$i_{\pm}: \mathcal{U}_{\pm 1}=SE_{\pm}(P_{i+1})\rightarrow E_{\pm}(P_{i+1})\cong\mathbb{R}^l$ are
inclusion maps. Now, the proof follows from Lemma \ref{tension field}.
\end{proof}

\begin{rem}\label{harmonic morphism}
By definition, a smooth map $f$ between Riemannian manifolds $M$ and $N$ is called a
\emph{harmonic morphism} if the pull back of any local harmonic function on $N$ by $f$ is also a local harmonic function
on $M$. It is well known that a smooth map is a harmonic morphism if and only if
it is  simultaneously harmonic and weakly horizontally conformal (see, for example, \cite{EL78}).
In particular, a submersive harmonic morphism should be a horizontally conformal map.
According to Proposition \ref{tangent map}, the eigenmaps determined by Theorem \ref{eigenmap} are
not harmonic morphisms.
\end{rem}
\begin{rem}
Recall that a harmonic map $f$ is called (energy) \emph{stable} if every second variation of the
energy functional at $f$ is nonnegative. Due to \cite{Le82} and
\cite{Pe84}, for $n\geq 3$, any stable harmonic map from any compact Riemannian manifold $M^m$ to $S^{n}(1)$
is constant. Thus, for $l\geq4$, the eigenmaps constructed in Theorem \ref{eigenmap}
are unstable. Moreover, for $l=3$, the eigenmaps we constructed are also unstable,
because that any stable harmonic map from any compact Riemannian manifold $M^m$ to $S^{2}(1)$ is a
harmonic morphism by one of the main results in \cite{Ch96}.
\end{rem}
Next, we deal with the case of isoparametric hypersurfaces in unit spheres.

\noindent\textbf{Proof of Proposition \ref{focal map}:}
\begin{proof}
Given an isoparametric hypersurface $M^n$ in $S^{n+1}(1)$ and $M_+$, $M_-$ the focal submanifolds with codimension $m_1+1$, $m_2+1$,
respectively. Choose a smooth field
$\xi$ of unit normals to $M$.  To prove this proposition, it is sufficient to consider one
focal map $\varphi:M\rightarrow M_+$. In this case, for each $x\in M\subset S^{n+1}(1)$, $\varphi(x)=\cos{\theta}x +\sin{\theta}\xi(x)$
for certain $\theta$. Due to Lemma \ref{tension field}, we need to compute the Laplacian of $F$ on $M$, where
$F=i\circ \varphi$ and $i:M_+\rightarrow \mathbb{R}^{n+2}$ is the inclusion map.
Using the fact that $M$ has constant mean curvature in $S^{n+1}(1)$ and the Codazzi equation, we compute directly and get
\begin{equation}\label{ab}
\left\{ \begin{array}{ll}
\triangle x=  -nx+H\xi,\nonumber\\
\triangle \xi=Hx-|B|^2\xi,\nonumber
\end{array}\right.
\end{equation}
where $B$, $H$ and $|B|^2$ are second fundamental form, mean curvature with respect to $\xi$ and squared norm of
the second fundamental form for the isoparametric hypersurface $M$ in $S^{n+1}(1)$, respectively.
Thus $\triangle F=(-\cos{\theta}n++\sin{\theta}H)x+(\cos{\theta}H-\sin{\theta}|B|^2)\xi$, which is normal to $M_+$.
It follows from Lemma \ref{tension field} that $\varphi$ is harmonic as desired.
\end{proof}
\begin{rem}
It is interesting that each harmonic map constructed in Proposition \ref{focal map} has constant energy density
everywhere, the proof of which depends on the characterization of tangent map of the focal map (c.f. pp.245 in \cite{CR85}).
\end{rem}

\begin{examp}
As we mentioned in the introduction, Cartan classified all isoparametric hypersurfaces in unit spheres with three
distinct principal curvatures (see, for example, pp.296-297 in \cite{CR85}). More precisely, such an isoparametric hypersurface must
be a tube of constant radius over a standard Veronese embedding of a projective plane $\mathbb{F}P^2$
into $S^{3m+1}(1)$, where $\mathbb{F}=\mathbb{R}$, $\mathbb{C}$, $\mathbb{H}$ (quaternions), $\mathbb{O}$ (Cayley numbers) for
$m = 1, 2, 4, 8$, respectively. Let $f: S^{3m+1}(1)\rightarrow \mathbb{R}$ be the restriction to $S^{3m+1}(1)$ of
the corresponding Cartan-M\"{u}nzner polynomial. Then $M^{3m}=f^{-1}(0)$ is the minimal isoparametric hypersurface with three distinct
constant principal curvatures $\cot\frac{\pi}{6}, \cot\frac{\pi}{2}, \cot\frac{5\pi}{6}$ of the same multiplicity $m$ with respect to
$\xi=\nabla f/|\nabla f|$, where $\nabla f$ is the gradient of $f$ on $S^{3m+1}(1)$,
and $M_{\pm}=f^{-1}(\pm 1)$ is isometric to $\mathbb{F}P^2$. Define a focal map
$\varphi_{\pi/2}:M\rightarrow M_{-}$ by
$$\varphi_{\pi/2}(x)=
(\cos{\pi/2})x +(\sin{\pi/2})\xi=\xi \quad for\quad x\in M\subset S^{3m+1}(1).$$
It follows from a direct calculation that the focal map
$\varphi_{\pi/2}:M\rightarrow M_{-}$  is a horizontally conformal submersion. In fact(c.f.\cite{GT13}),
$$|(\varphi_{\pi/2})_*(X)|=\sqrt{3}|X|, \forall X\in (\mathrm{Ker}(\varphi_{\pi/2})_*)^{\perp}\subset TM.$$
Moreover, the fibers of $\varphi_{\pi/2}$ are all totally geodesic in $M$. In one word, these facts show that
the focal map $\varphi_{\pi/2}:M\rightarrow M_{-}$ is a harmonic morphism by Proposition \ref{focal map} and
Remark \ref{harmonic morphism}.
\end{examp}
For isoparametric hypersurfaces in unit spheres of OT-FKM type, more harmonic maps are constructed as follows.
Let $\{P_0, \cdots, P_m\}$ be a symmetric Clifford system on $R^{2l}$ as before. Then it defines a family of isoparametric
hypersurfaces of OT-FKM type and two focal submanifolds $M_+$, $M_-$ in $S^{2l-1}$ of codimension
$m+1$, $l-m$, respectively. For any $P\in \Sigma(P_0,...,P_m)$,
set $\xi=Px$ for $x\in M_+$, which is a global unit normal vector field on $M_+$ in $S^{2l-1}(1)$. Associated with $\xi$, we define two maps
   $$ \phi: M_+ \to   M_-,~\psi: M_+ \to   M $$
by $\phi(x)=\frac{1}{\sqrt{2}}(x+\xi)$, and $\psi(x)=\cos{t}x+\sin{t}\xi$, where $M$ is any isoparametric hypersurface in the family and $t=\mathrm{dist}(M_+, M)$, the spherical
distance between $M_+$ and $M$.
Clearly the maps are well defined.
\begin{prop}\label{M_+}
Both of the maps $ \phi: M_+ \to   M_-$ and $\psi: M_+ \to   M $ are harmonic maps.
\end{prop}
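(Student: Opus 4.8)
The plan is to argue exactly as in the proof of Theorem \ref{eigenmap}, combining Lemma \ref{tension field} with Takahashi's theorem. The key observation is that each of the two maps is the restriction to $M_+$ of a \emph{fixed linear endomorphism} of $\mathbb{R}^{2l}$: fixing $P\in\Sigma(P_0,\dots,P_m)$ and writing $\xi=Px$, we have
$$\phi(x)=\tfrac{1}{\sqrt2}(I+P)x,\qquad \psi(x)=(\cos t\,I+\sin t\,P)x.$$
Using $P^2=I$, the symmetry of $P$, and the fact that $\langle P_\alpha x,x\rangle=0$ for $x\in M_+$, one checks directly that $\phi(M_+)\subseteq M_-$ and $\psi(M_+)\subseteq M$ (for $t=\mathrm{dist}(M_+,M)$), so that the maps are well defined; this verification is the only place where the Clifford relations are genuinely used.

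Now treat $\phi$ first. Let $i\colon M_-\hookrightarrow\mathbb{R}^{2l}$ denote the inclusion and put $F=i\circ\phi\colon M_+\to\mathbb{R}^{2l}$. Since the target is flat, the tension field $\tau(F)$ of $F$ equals the componentwise Laplace--Beltrami operator $\triangle_{M_+}F$. Because $M_+$ is a minimal submanifold of $S^{2l-1}(1)$ of dimension $2l-m-2$, Takahashi's theorem gives $\triangle_{M_+}x=-(2l-m-2)\,x$ for the position vector; and since $P$ is a constant linear map it commutes with $\triangle_{M_+}$, so $\triangle_{M_+}(Px)=-(2l-m-2)\,Px$. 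Therefore
$$\tau(F)=\triangle_{M_+}F=-(2l-m-2)\cdot\tfrac{1}{\sqrt2}(x+Px)=-(2l-m-2)\,F.$$
At every point $x\in M_+$ this is a scalar multiple of the position vector $F(x)=\phi(x)$, which, being a point of $M_-\subset S^{2l-1}(1)$, is normal to the sphere and hence normal to $M_-$. By Lemma \ref{tension field}, $\phi$ is harmonic (indeed an eigenmap, in the spirit of Theorem \ref{eigenmap}).

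The argument for $\psi$ is word for word the same: with $i\colon M\hookrightarrow\mathbb{R}^{2l}$ and $F=i\circ\psi$, the two facts $\triangle_{M_+}x=-(2l-m-2)\,x$ and ``$P$ commutes with $\triangle_{M_+}$'' give $\tau(F)=\triangle_{M_+}F=-(2l-m-2)(\cos t\,x+\sin t\,Px)=-(2l-m-2)\,F$, a multiple of the position vector $\psi(x)\in M\subset S^{2l-1}(1)$ and hence normal to $M$; Lemma \ref{tension field} again finishes the proof. There is essentially no obstacle here beyond the well-definedness check: the latter rests on the identity $P_\alpha P+PP_\alpha=2a_\alpha I$ (where $P=\sum_\alpha a_\alpha P_\alpha$, $\sum_\alpha a_\alpha^2=1$), from which a short computation yields $\langle P_\alpha\psi(x),\psi(x)\rangle=a_\alpha\sin 2t$, so that the Cartan--M\"{u}nzner polynomial is constant equal to $\cos 4t$ along the image of $\psi$ and pins down the level set $M$. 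No curvature computation on $M_+$, $M_-$ or $M$ is needed, and both assertions reduce to a one-line application of Takahashi's theorem together with Lemma \ref{tension field}.
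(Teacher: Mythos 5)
Your proposal is correct and is precisely the argument the paper intends: the paper's two-line proof ("similar to that of Theorem \ref{eigenmap}. By Takahashi Theorem and Lemma \ref{tension field}\dots") is exactly the Takahashi-plus-tension-field computation that you carry out explicitly, using that $\phi$ and $\psi$ are restrictions to $M_+$ of fixed linear maps $\frac{1}{\sqrt2}(I+P)$ and $\cos t\,I+\sin t\,P$, so $\triangle_{M_+}F=-(2l-m-2)F$ is a multiple of the position vector and hence normal to the target. You simply spell out the well-definedness check (which the paper dismisses as "clear") and the routine Laplacian computation, so this is the same proof with the elided details filled in.
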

\begin{proof}
The proof is similar to that of Theorem \ref{eigenmap}. By Takahashi Theorem and Lemma \ref{tension field}, it follows
that $\phi$ and $\psi$ are harmonic.
\end{proof}
\begin{rem}
It is not difficult to prove that $\phi$ and $\psi$ are not harmonic morphisms.
Moreover, $\psi$ is a section of the focal map (fibration) from $M$ to $M_+$
and the map $\psi: M_+ \to M $ is a smooth embedding.
\end{rem}
Whenever the Clifford system $P_0,\cdots,P_m$ can be extended to a Clifford system $P_0,\cdots, P_m$, $P_{m+1}$ on $\mathbb{R}^{2l}$,
one can define $\eta=P_{m+1}x$ for $x\in M_-$ in $S^{2l-1}(1)$. In fact, $\eta$ is a global unit normal vector field on $M_-$.
Associated with $\eta$, we can also define two maps
$$ \tilde{\phi}: M_- \to   M_+,~\tilde{\psi}: M_- \to   M $$
by $\tilde{\phi}(x)=\frac{1}{\sqrt{2}}(x+\eta)$, and $\tilde{\psi}(x)=\cos{s}x+\sin{s}\eta$, where $s=\mathrm{dist}(M_-, M)$, the spherical
distance between $M_-$ and $M$.
\begin{prop}\label{M_-}
Both of the maps $ \tilde{\phi}: M_-\to   M_+$ and $\tilde{\psi}: M_- \to   M $ are harmonic maps.
\end{prop}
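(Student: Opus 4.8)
The plan is to imitate, essentially word for word, the proofs of Proposition \ref{M_+} and Theorem \ref{eigenmap}: the harmonicity will come from Takahashi's theorem combined with Lemma \ref{tension field}, once the two maps are seen to be well defined. The first thing I would record is that $\eta(x)=P_{m+1}x$ is a genuine global unit normal field on $M_-$ in $S^{2l-1}(1)$. This is exactly where the hypothesis that $\{P_0,\dots,P_m\}$ extends to a symmetric Clifford system $\{P_0,\dots,P_{m+1}\}$ enters. Since $P_{m+1}$ is symmetric with $P_{m+1}^2=I_{2l}$, it is orthogonal, so $|P_{m+1}x|=1$; and using the anticommutation relations $P_\alpha P_{m+1}=-P_{m+1}P_\alpha$ $(0\le\alpha\le m)$ together with the Cauchy--Schwarz equality $\mathcal{P}x=x$ valid on $M_-$ (here $\mathcal{P}=\sum_{\alpha=0}^{m}\langle P_\alpha x,x\rangle P_\alpha$, and the equality comes from $\sum_{\alpha=0}^{m}\langle P_\alpha x,x\rangle^2=1=|x|^4$ forcing the equality case), one checks that $\mathcal{P}P_{m+1}x=-P_{m+1}x$, that $\langle P_{m+1}x,x\rangle=0$ (apply the same inequality to the extended system), and that $\langle P_{m+1}x,Qx\rangle=0$ for every $Q\in\Sigma(P_0,\dots,P_m)$ (because $P_{m+1}Q$ is skew-symmetric). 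By the description of $\nu_xM_-$ recalled in Lemma \ref{isoparametric}, this says precisely that $P_{m+1}x\perp T_xM_-$ and $P_{m+1}x\perp x$, i.e.\ $\eta$ is a unit normal in $S^{2l-1}(1)$. The same anticommutation computation gives $\tilde\phi(x)=\tfrac{1}{\sqrt2}(x+\eta)\in M_+$, namely $|\tilde\phi(x)|=1$ and $\langle P_\alpha\tilde\phi(x),\tilde\phi(x)\rangle=0$ for $0\le\alpha\le m$, while $\tilde\psi$ lands in the parallel isoparametric hypersurface $M$ at distance $s$ by M\"{u}nzner's tube structure. So both maps are well defined.

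The computation itself is then immediate. Since $M_-$ is minimal in $S^{2l-1}(1)$ with $\dim M_-=m_2+1\ \text{less than}\ 2l-1$, more precisely $\dim M_-=l+m-1$, Takahashi's theorem gives $\triangle x=-(l+m-1)\,x$ for the position vector $x$ viewed as an $\mathbb{R}^{2l}$-valued function on $M_-$. The Laplacian of $M_-$ acts componentwise on $\mathbb{R}^{2l}$-valued maps, hence commutes with multiplication by the constant matrix $P_{m+1}$, so
$$\triangle\eta=P_{m+1}\triangle x=-(l+m-1)\,\eta .$$
Therefore, writing $\iota$ for the inclusion of the target ($M_+$, resp.\ $M$) into $\mathbb{R}^{2l}$ and $F=\iota\circ\tilde\phi$, resp.\ $F=\iota\circ\tilde\psi$, linearity gives $\triangle F=-(l+m-1)\,\tilde\phi$, resp.\ $\triangle F=-(l+m-1)\,\tilde\psi$.

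To conclude, recall that the tension field of an $\mathbb{R}^{2l}$-valued map is just its componentwise Laplacian, so $\tau(F)=-(l+m-1)\,\tilde\phi(x)$ (resp.\ $-(l+m-1)\,\tilde\psi(x)$) is a scalar multiple of the position vector of a point of $S^{2l-1}(1)$. Such a vector is normal to $S^{2l-1}(1)$ at that point, hence a fortiori normal to the submanifold $M_+\subset S^{2l-1}(1)$ (resp.\ $M\subset S^{2l-1}(1)$). By Lemma \ref{tension field}, $\tilde\phi$ and $\tilde\psi$ are harmonic, which is the assertion.

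I expect the only non-routine part to be the well-definedness discussion of the first paragraph: verifying via the Clifford relations that $P_{m+1}x$ is normal to $M_-$ and that $\tilde\phi$ takes values in $M_+$. This is bookkeeping with the identities $P_\alpha P_{m+1}+P_{m+1}P_\alpha=0$, $P_{m+1}^2=I_{2l}$ and $\mathcal{P}x=x$, entirely parallel to the $M_+$ case already treated in the paper; once it is in place the harmonicity follows in one line from Takahashi's theorem and Lemma \ref{tension field}.
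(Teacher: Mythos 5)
Your proposal is correct and follows the same route the paper sketches: the paper disposes of this proposition by pointing back to Proposition \ref{M_+} (which in turn invokes the Takahashi theorem and Lemma \ref{tension field}), and you have supplied exactly those details, including the well-definedness bookkeeping with the Clifford relations that the authors leave implicit. One slight slip to tidy: you write ``$\dim M_- = m_2+1$'' before correcting yourself --- that quantity is the codimension of $M_-$ in $S^{2l-1}(1)$; the dimension is indeed $l+m-1$, which is the value you actually use in the Takahashi computation.
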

\begin{proof}
The proof is analogous to that of Proposition 4.1 and is omitted.
\end{proof}
\begin{rem}
It is not difficult to prove that $ \tilde{\phi}$ and $\tilde{\psi}$ are not harmonic morphisms.
Moreover, the map $\tilde{\psi}: M_- \to   M $ is a smooth embedding. In fact, $\tilde{\psi}$ is a section
of the focal map (fibration) from $M$ to $M_-$.
\end{rem}

We now study the stability of harmonic maps. Recall that a compact Riemannian manifold $M$ is called harmonically unstable,
if there exists neither nonconstant
stable harmonic map from $M$ to any Riemannian manifold nor from any compact Riemannian manifold to $M$
(c.f. \cite{Oh86}). A significant result states that if $M$ is harmonically unstable then $\pi_1(M)=0$ and $\pi_{2}(M)=0$.
For minimal submanifolds in unit spheres, Ohnita obtained the following elegant result.
\begin{thm}\cite{Oh86}
{\itshape Let $M$ be an m-dimensional closed minimal submanifold in a unit sphere $S^{n}(1)$. If the Ricci
curvature $\rho$ of $M$ satisfies $\rho>m/2$, then $M$ is harmonically unstable.}
\end{thm}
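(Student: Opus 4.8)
\medskip
\noindent\textbf{Proof strategy.}
The assertion that $M$ is harmonically unstable packages two statements: no nonconstant harmonic map $f\colon P\to M$ from a closed Riemannian manifold $P$ is stable, and no nonconstant harmonic map $f\colon M\to W$ into a Riemannian manifold $W$ is stable. The plan is to establish both by producing, in each case, a finite family of Jacobi test fields whose index forms sum to a strictly negative number whenever $\rho>m/2$. The common apparatus is the minimal immersion $M^m\subset S^n(1)\subset\R^{n+1}$: writing $x$ for the position vector, $B$ for the second fundamental form of $M$ in $S^n(1)$, and $h_a=\langle a,x\rangle|_M$ for $a\in\R^{n+1}$, one has $\mathrm{grad}^M h_a=a^{\top}$ (the tangential projection of $a$), $\mathrm{Hess}^M h_a(X,Y)=\langle a,B(X,Y)\rangle-h_a\langle X,Y\rangle$, and, by minimality and Takahashi's theorem, $\Delta^M h_a=-m\,h_a$. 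The summations below close thanks to three facts: the tight-frame identities $\sum_{\mu}\langle a_\mu,u\rangle\langle a_\mu,v\rangle=\langle u,v\rangle$ for an orthonormal basis $\{a_\mu\}_{\mu=1}^{n+1}$ of $\R^{n+1}$; the orthogonalities $B(\cdot,\cdot)\perp x$, $B(\cdot,\cdot)\perp TM$ and $x^{\top}=0$; and the Gauss equation for $M$ minimal in $S^n(1)$, namely $\sum_j|B(X,e_j)|^2=(m-1)|X|^2-\mathrm{Ric}_M(X,X)$ together with its polarization $\sum_j\langle B(e_j,e_k),B(e_j,e_l)\rangle=(m-1)\delta_{kl}-\mathrm{Ric}_M(e_k,e_l)$, in which the minimality $H=0$ is used crucially.

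\medskip
\noindent\emph{Maps into $M$.} Let $f\colon P\to M$ be harmonic and test the index form $I_f(V,V)=\int_P\big(|\nabla V|^2-\sum_\alpha\langle R^M(V,df(e_\alpha))df(e_\alpha),V\rangle\big)\,dV_P$ ($\{e_\alpha\}$ a local orthonormal frame of $P$, $\nabla$ the connection on $f^{*}TM$) against the sections $V_\mu=a_\mu^{\top}\circ f$. Writing $\nabla_{e_\alpha}V_\mu$ through $\mathrm{Hess}^M h_{a_\mu}$ and summing over $\mu$, the tight-frame identities and $B\perp x$ collapse $\sum_\mu|\nabla V_\mu|^2$ to $\sum_\alpha\big(|df(e_\alpha)|^2+\sum_j|B(df(e_\alpha),e_j)|^2\big)$, which the Gauss equation converts to $\sum_\alpha\big(m\,|df(e_\alpha)|^2-\mathrm{Ric}_M(df(e_\alpha),df(e_\alpha))\big)$; a parallel tight-frame computation gives $\sum_\mu\sum_\alpha\langle R^M(V_\mu,df(e_\alpha))df(e_\alpha),V_\mu\rangle=\sum_\alpha\mathrm{Ric}_M(df(e_\alpha),df(e_\alpha))$. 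Hence
\begin{equation*}
\sum_\mu I_f(V_\mu,V_\mu)=\int_P\Big(m\,|df|^2-2\sum_\alpha\mathrm{Ric}_M(df(e_\alpha),df(e_\alpha))\Big).
\end{equation*}
Since $\mathrm{Ric}_M(X,X)>\tfrac{m}{2}|X|^2$ for all $X\ne0$, this is strictly negative unless $f$ is constant, and as $\sum_\mu|V_\mu|^2\equiv m>0$ the $V_\mu$ are not all zero, so some $I_f(V_\mu,V_\mu)<0$: $f$ is unstable.

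\medskip
\noindent\emph{Maps from $M$.} Let $f\colon M\to W$ be harmonic and take $V_\mu=df(a_\mu^{\top})$. Now $a_\mu^{\top}=\mathrm{grad}^M h_{a_\mu}$ is conformal only up to the shape operator: $\nabla^M_Z(a_\mu^{\top})=-h_{a_\mu}Z+A_\mu Z$ with $\langle A_\mu Z,Y\rangle=\langle a_\mu,B(Z,Y)\rangle$, so $\nabla_{e_j}V_\mu=\nabla df(e_j,a_\mu^{\top})-h_{a_\mu}df(e_j)+df(A_\mu e_j)$. Squaring and summing over $\mu$, all three cross terms vanish --- by $x^{\top}=0$, $B\perp x$ and $B\perp TM$ respectively --- and the polarized Gauss equation reorganizes the surviving pieces into $\sum_\mu|\nabla V_\mu|^2=|\nabla df|^2+m\,|df|^2-\sum_{j,k}\mathrm{Ric}_M(e_j,e_k)\langle df(e_j),df(e_k)\rangle$; similarly $\sum_\mu\sum_j\langle R^W(V_\mu,df(e_j))df(e_j),V_\mu\rangle=\sum_{j,k}\langle R^W(df(e_j),df(e_k))df(e_k),df(e_j)\rangle$. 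Integrating the Bochner (Eells--Sampson) formula for the harmonic map $f$ over the closed manifold $M$ cancels $\int_M|\nabla df|^2$ against this very $R^W$-term, leaving
\begin{equation*}
\sum_\mu I_f(V_\mu,V_\mu)=\int_M\Big(m\,|df|^2-2\sum_{j,k}\mathrm{Ric}_M(e_j,e_k)\langle df(e_j),df(e_k)\rangle\Big).
\end{equation*}
Diagonalizing $\mathrm{Ric}_M$ and using $\rho>m/2$ makes this strictly negative unless $f$ is constant, and $\sum_\mu|V_\mu|^2\equiv|df|^2$ shows the $V_\mu$ cannot all vanish: $f$ is unstable.

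\medskip
\noindent\emph{The main obstacle.} The index-form algebra in both parts is routine once the cross terms are organized; the genuinely delicate step is in the second part, where one must verify that the shape-operator corrections $A_\mu$, after summation over $\mu$, recombine through the \emph{polarized} Gauss equation (this is exactly where minimality $H=0$ enters) into precisely the Ricci quantities that cancel against the Bochner contribution --- so that the ambient curvature $R^W$ disappears and one is left with the clean comparison $m<2\rho$. Once harmonic instability is in hand, the vanishing $\pi_1(M)=\pi_2(M)=0$ recalled before the theorem is a formal consequence.
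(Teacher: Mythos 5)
The paper quotes this result from Ohnita \cite{Oh86} without reproducing a proof, so there is no in-paper argument to compare against. Your proof is correct and is in fact the standard averaging argument — essentially the one in \cite{Oh86} itself: test the index form against the tight frame of sections $a_\mu^{\top}\circ f$ (resp.\ $df(a_\mu^{\top})$), use the Gauss equation with $H=0$ to convert $\sum_j|B(\cdot,e_j)|^2$ (resp.\ its polarization) into Ricci, and in the ``maps from $M$'' direction integrate the Eells--Sampson formula to absorb $\int_M|\nabla df|^2$ against the $R^W$-term, landing both times on $\int\bigl(m|df|^2-2\,\mathrm{Ric}_M(df,df)\bigr)<0$. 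The bookkeeping checks out: the three cross terms do vanish for the reasons you cite ($\sum_\mu h_{a_\mu}a_\mu^{\top}=x^{\top}=0$, $B\perp TM$, $B\perp x$), $\sum_\mu|V_\mu|^2\equiv m$ in the first case and $\equiv|df|^2$ in the second so the test fields are nontrivial whenever $f$ is nonconstant, and the sign conventions in the second variation formula and the Bochner identity are consistent.
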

Using this theorem, Ohnita also investigated the harmonically unstability of minimal isoparametric hypersurfaces in
unit spheres. As mentioned before, the focal submanifolds of isoparametric hypersurfaces are minimal in the unit sphere. Hence
we get the following proposition by applying Ohnita's theorem.
\begin{prop}\label{stable}
Given a symmetric Clifford system $\{P_0,...,P_m\}$ on $\mathbb{R}^{2l}$ and consider the corresponding
isoparametric hypersurface of OT-FKM type in $S^{2l-1}(1)$ with $g=4$ and multiplicities $(m_1, m_2)=(m, l-m-1)$. For the focal submanifold $M_+$
of codimension $m+1$ in $S^{2l-1}(1)$, if $(m_1, m_2)\neq (1, 1), (1, 2), (2, 1), (2, 3), (4, 3), (5, 2), (6, 1)$ and
$(9, 6)$, then $M_+$ is harmonically unstable.
\end{prop}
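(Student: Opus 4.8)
The plan is to deduce the statement from Ohnita's theorem quoted just above. Since every focal submanifold of an isoparametric hypersurface in a unit sphere is minimal in that sphere (M\"{u}nzner \cite{Mu80}), it suffices to prove that, whenever $(m_1,m_2)$ avoids the eight listed pairs, the Ricci curvature of $M_+$ satisfies $\mathrm{Ric}(X,X)>\frac{1}{2}(\dim M_+)|X|^2$ for all nonzero tangent $X$; Ohnita's theorem then forces $M_+$ to be harmonically unstable.

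To estimate the Ricci curvature I would use the Gauss equation together with the explicit normal bundle of $M_+$. Put $n=\dim M_+=m_1+2m_2$; recall that $M_+$ has codimension $m_1+1$ in $S^{2l-1}(1)$, and that at any $x\in M_+$ the normal space of $M_+$ in $S^{2l-1}(1)$ is exactly $\mathrm{Span}\{P_0x,\dots,P_{m}x\}$, with $\xi_\alpha:=P_\alpha x$ ($0\le\alpha\le m$) an orthonormal frame of it. A direct computation (analogous to those in Section 2) shows that the shape operator of $M_+$ in $S^{2l-1}(1)$ is $A_{\xi_\alpha}X=-(P_\alpha X)^{\top}$, the superscript denoting the component tangent to $M_+$; since $P_\alpha$ is orthogonal, $|A_{\xi_\alpha}X|\le|P_\alpha X|=|X|$. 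As $M_+$ is minimal, the Gauss equation yields, for a unit tangent vector $X$,
\begin{equation*}
\mathrm{Ric}(X,X)=(n-1)-\sum_{\alpha=0}^{m}|A_{\xi_\alpha}X|^{2}\ge(n-1)-(m_1+1)=2m_2-2 .
\end{equation*}

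It then remains to decide when $2m_2-2>\frac{n}{2}=\frac{1}{2}(m_1+2m_2)$, i.e.\ when $m_1<2m_2-4$. For OT-FKM type with $g=4$ the multiplicities are $(m_1,m_2)=(m,\,k\,\delta(m)-m-1)$ with $k\ge 1$ and $k\,\delta(m)\ge m+2$. The inequality $m_1\ge 2m_2-4$ forces $m_2\le(m_1+4)/2$, which together with $m_2\ge\delta(m_1)-m_1-1$ (the case $k=1$) bounds $m_1$; using $\delta(1)=1$, $\delta(2)=2$, $\delta(3)=\delta(4)=4$, $\delta(5)=\dots=\delta(8)=8$, $\delta(9)=16$ and $\delta(m+8)=16\,\delta(m)$, a short case check shows that $m_1\ge 2m_2-4$ happens precisely for $(m_1,m_2)\in\{(1,1),(1,2),(2,1),(2,3),(4,3),(5,2),(6,1),(9,6)\}$. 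Hence for every admissible pair outside this list one has $\mathrm{Ric}(X,X)\ge 2m_2-2>\frac{n}{2}$ for every unit $X$, and Ohnita's theorem applies, so $M_+$ is harmonically unstable.

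There is no deep obstacle once Ohnita's theorem is available; the one point requiring care is that the Ricci estimate not be wasteful — and indeed $2m_2-2$ is exactly the threshold produced by this method, which is why the eight excluded pairs (where $2m_2-2\le\frac{1}{2}\dim M_+$) genuinely fall outside its reach and are simply not addressed by the proposition. Apart from that, the proof amounts to the routine Gauss-equation computation and the elementary enumeration of OT-FKM multiplicities indicated above.
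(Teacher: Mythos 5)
Your proof is correct and reaches the paper's conclusion by essentially the same route: apply Ohnita's theorem to $M_+$ after establishing the Ricci lower bound $\mathrm{Ric}\geq 2m_2-2$ and then enumerate the eight exceptional OT--FKM multiplicity pairs where this bound fails to exceed $\frac{1}{2}\dim M_+$. The only difference is that the paper simply cites the exact Ricci curvature formula of $M_+$ from \cite{TY12}, namely $\rho(X,X)=2(l-m-2)+2\sum_{0\leq\alpha<\beta\leq m}\langle X,P_\alpha P_\beta x\rangle^2$, and drops the nonnegative sum to obtain $\rho\geq 2(l-m-2)=2m_2-2$; you instead re-derive that same lower bound in a self-contained way by identifying the unit normal frame $\xi_\alpha=P_\alpha x$, observing $A_{\xi_\alpha}X=-(P_\alpha X)^\top$ so $|A_{\xi_\alpha}X|\leq|X|$, and plugging into the Gauss equation for a minimal submanifold. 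This is a clean, independent verification that agrees with the cited formula (indeed, expanding $|(P_\alpha X)^\top|^2=1-\sum_{\beta\neq\alpha}\langle X,P_\alpha P_\beta x\rangle^2$ and summing recovers the \cite{TY12} expression exactly). Your enumeration of the pairs with $m_1\geq 2m_2-4$, using $\delta(1)=1$, $\delta(2)=2$, $\delta(3)=\delta(4)=4$, $\delta(5)=\cdots=\delta(8)=8$, $\delta(9)=16$ and $\delta(m+8)=16\delta(m)$, is also correct and reproduces the paper's excluded list precisely.
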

\begin{proof}
According to \cite{TY12}, for each point $x\in M_+$ and any unit tangent vector $X\in T_xM_+$, the
Ricci curvature of $M_+$ is given by
$$\rho(X, X)=2(l-m-2)+2\sum_{0\leq\alpha<\beta\leq m}\langle X, P_{\alpha}P_{\beta}x\rangle^2.$$
Then the proposition follows from the formula above and Ohnita's theorem.
\end{proof}
\begin{rem}\label{stable2}
For the exceptional cases in the proposition above, we have

1). $(m_1, m_2)=(1, 1)$:  Since $M_+^3$ is diffeomorphic to $\mathrm{SO}(3)$ (see pp. 301 of \cite{CR85}), $\pi_1(M_+^3)=\mathbb{Z}_2$. Hence $M_+^3$ is not harmonically unstable.

2). $(m_1, m_2)=(1, 2)$: Since $M_+^5$ is diffeomorphic to the unit tangent bundle of $S^3$ (see pp. 301 of \cite{CR85}), which is diffeomorphic to $S^2\times S^3$,
$\pi_2(M_+^5)=\pi_2(S^2\times S^3)=\mathbb{Z}$. Hence $M_+^5$ is not harmonically unstable.

3). $(m_1, m_2)=(2, 1)$: Since $M_+^4$ is diffeomorphic to $(S^1\times S^3)/\mathbb{Z}_2$ (see pp. 303 of \cite{CR85}), which is in turn diffeomorphic to
$S^1\times S^3$, $\pi_1(M_+^4)=\pi_1(S^1\times S^3)=\mathbb{Z}$.
Hence $M_+^4$ is not harmonically unstable.

4). $(m_1, m_2)=(5, 2)$: Since $\pi_1(M_+^{9})=0$, by Hurwitz isomorphism, $\pi_2(M_+^{9})=H_2(M_+, \mathbb{Z})=\mathbb{Z}$ (c.f. \cite{Mu80}).
Hence $M_+^9$ is not harmonically unstable.

5). $(m_1, m_2)=(6, 1)$: Similar to the case 3), $\pi_1(M_+^8)=\mathbb{Z}$.
Hence $M_+^8$ is not harmonically unstable.

6). $(m_1, m_2)=(4, 3)$ and the homogeneous case: For each point $x\in M_+^{10}$ and any unit tangent vector $X\in T_xM_+^{10}$,
the Ricci curvature
$$\rho(X, X)=4+
2\sum_{0\leq\alpha<\beta\leq 4}\langle X, P_{\alpha}P_{\beta}x\rangle^2=6>10/2,$$
since
$\{P_{\alpha}P_{\beta}x~|~0\leq\alpha<\beta\leq 4\}$ is an orthonormal basis of $T_xM_+^{10}$
(c.f. \cite{QTY13}). Hence $M_+^{10}$ is harmonically unstable by Ohnita's Theorem.

There are still three cases we have not determined, i.e., $(m_1, m_2)=(2, 3)$, $(4, 3)$ and the inhomogeneous case, or $(9 ,6)$.
\end{rem}

\begin{rem}
As a result of Proposition \ref{stable} and Remark \ref{stable2}, there exist unstable harmonic maps among the ones
we constructed in Proposition \ref{focal map}, \ref{M_+} and \ref{M_-}.
\end{rem}
\section{Counterexamples to Leung's conjectures}
This section will use the expansion formula of Cartan-M\"{u}nzner polynomial
and the isoparametric triple system to prove Theorem \ref{counter example}, providing infinitely
 many counterexamples to two conjectures of Leung \cite{Le91} on minimal submanifolds in unit spheres.

\noindent\textbf{Proof of Theorem \ref{counter example}:}
\begin{proof}
Let $M^n$ be an isoparametric hypersurface in $S^{n+1}(1)$ with $g=4$ and
multiplicities $(m_1, m_2)$,  and denote by $M_+$ and $M_-$ the focal
submanifolds of $M^n$ in $S^{n+1}$ with dimension $m_1+2m_2$ and $2m_1+m_2$
respectively. Note $n=2(m_1+m_2)$. Assume $F$
is the associated Cartan-M\"{u}nzner isoparametric
polynomial of degree four so that $M_+$ is defined by $F^{-1}(1)\cap S^{n+1}(1)$.
To complete the proof of this theorem, we only need to consider $M_+$, since if $F$ is changed to $-F$,
$M_+$ is changed to $M_-$.
Given $x\in M_+$, choose an orthonormal basis $\xi_\alpha$, $\alpha=0,1,...,m_1$ for
the normal space of $M_+$ in $S^{2l-1}(1)$ at the point $x$. Let $A_{\alpha}$, $\alpha=0,1,...,m_1$,
be the corresponding shape operators.
For any vector $X\in T_xM_+$, one has
$|B(X,X)|^2 = \sum_{\alpha=0}^{m_1}\langle A_{\alpha}X, X\rangle^2,$
where $B$ is the second fundamental form of $M_+$ in $S^{2l-1}(1)$.

For our purpose, we first recall a formulation of the Cartan-M{\"u}nzner polynomial $F$ in terms of the second fundamental forms of the focal submanifolds, developed by Ozeki and Takeuchi (see pp. 52 of \cite{CCJ07} and also \cite{OT75}). For $x\in M_+$, and an orthonormal basis $\{\xi_\alpha~|~\alpha=0,1,...,m_1\}$ of the normal
space of $M_+$ in $S^{n+1}(1)$ at $x$, one can introduce the quadratic homogeneous polynomials
$p_{\alpha}(y):=\langle A_{\alpha}y, y \rangle,$
for $0\leq \alpha \leq m_1$, where $y$ is tangent to $M_+$ at $x$. The Cartan-M{\"u}nzner polynomial $F$ is related to $p_{\alpha}$ as follows,
\begin{eqnarray}\label{expansion formula}
F(tx+y+w)&=&t^4+(2|y|^2-6|w|^2)t^2+8(\sum_{\alpha=0}^{m_1}p_{\alpha}(y)w_{\alpha})t \nonumber\\
      & & +|y|^4-2\sum_{\alpha=0}^{m_1}(p_{\alpha}(y))^2+8\sum_{\alpha=0}^{m_1}q_{\alpha}(y)w_{\alpha} \nonumber \\
      & & +2\sum_{\alpha,\beta=0}^{m_1}\langle\nabla p_{\alpha}, \nabla p_{\beta}\rangle w_{\alpha}w_{\beta}-6|y|^2|w|^2+|w|^4 ,\nonumber
\end{eqnarray}
where the homogeneous polynomial of degree three, $q_{\alpha}(y)$, are the components of the third fundamental form of $M_+$, and $w=\sum_{\alpha=0}^{m_1}w_{\alpha}\xi_{\alpha}$.

By the expansion formula above, we observe that for any $X\in T_xM_+$
$$F(X)=|X|^4-2\sum_{\alpha=0}^{m_1} (p_{\alpha}(X))^2 = |X|^4-2|B(X, X)|^2.$$
Hence, $|B(X, X)|^2=\frac{|X|^4-F(X)}{2}$.

Next, we will give an investigation into the possible value of $|B(X, X)|^2$, from which Theorem \ref{counter example} follows immediately.
To do it, we use the isoparametric triple system introduced by Dorfmeister and Neher, following the way in \cite{Im08}.
Let $x'$ be a unit vector normal to the tangent space $T_xM_+$ in $T_xS^{n+1}$.
Then the great circle $S$ through $x$ and $x'$ intersects the isoparametric hypersurface and two focal submanifolds orthogonally at each intersection point. The set $S\cap M_+$ consists of the four points $ \pm x$ and $\pm x'$, and the set $S\cap M_-$ consists of the four points $ \pm y$ and $\pm y'$, where $\sqrt{2}x=y-y'$ and $\sqrt{2}x'=y+y'$. There are orthogonal Peirce decompositions
$$\mathbb{ R}^{2m_1+2m_2+2}=\mathrm{Span}\{x\} \oplus V_{-3}(x) \oplus V_1(x)=\mathrm{Span}\{y\} \oplus V_{3}(y) \oplus V_{-1}(y),$$
where $V_{-3}(x)=T_x^\perp M_+$, the normal space of $M_+$ in $S^{n+1}(1)$ at $x$, $V_1(x)=T_xM_+$, $V_{3}(y)=T_y^\perp M_-$, the normal space of $M_-$ in $S^{n+1}(1)$ at $y$, and $V_{-1}(y)=T_yM_-$, the so-called\emph{ Peirce spaces}. Furthermore, as one of the main results in \cite{Im08}, Immervoll gave a more subtle orthogonal decomposition as
$$ \mathbb{R}^{2m_1+2m_2+2}=\mathrm{Span}(S) \oplus V'_{-3}(x) \oplus V'_{-3}(x') \oplus V'_{3}(y) \oplus V'_3(y')$$
where the subspaces $V'_{-3}(x)$, $V'_{-3}(x')$, $V'_{3}(y)$ and $V'_{3}(y')$ are defined by $V_{-3}(x)=\mathrm{Span}\{x'\} \oplus V'_{-3}(x)$, $V_{-3}(x')=\mathrm{Span}\{x\} \oplus V'_{-3}(x')$,
$V_{3}(y)=\mathrm{Span}\{y'\} \oplus V'_{3}(y)$ and $V_{3}(y')=\mathrm{Span}\{y\} \oplus V'_{3}(y')$.
It follows from the two decompositions above that
$$ T_xM_+ = V'_{-3}(x') \oplus V'_{3}(y) \oplus V'_3(y').$$
Now taking a unit vector $X_1$ in $V'_3(y)$, we see that $X_1\in M_-$. Actually,
$S^{2l-1}(1)\cap V_3(y)\subset M_-$. Similarly, any unit vector $X_1$
in $V'_3(y')$ also belongs to $M_-$. Hence, for any unit vectors $X_1\in V'_3(y)\oplus V'_3(y')$, we have $F(X_1)=-1$, and thus
$|B(X_1,X_1)|^2=\frac{1-F(X_1)}{2}=1.$
On the other hand, for any unit vector $X_0\in V'_{-3}(x')$, we can see that $X_0\in M_+$ and $F(X_0)=1$. Therefore,  $|B(X_0,X_0)|^2=\frac{1-F(X_0)}{2}=0$.
We have proved that $M_+^{m_1+2m_2}$ is a minimal submanifold in $S^{n+1}(1)$ with $\sigma(M_+)=1$.

Lastly, according to \cite{Mu80}, the cohomology ring of $M_+$ is different from that of $S^{m_1+2m_2}$,
and thus $M_+$ is not homeomorphic to $S^{m_1+2m_2}$.

Now, the proof is complete.
\end{proof}
\begin{ack}
The authors would like to thank Professors Y. Ohnita and E. Loubeau for useful comments on the stability of harmonic maps, and
thank Prof. Jiagui Peng for helpful advices during the preparation of the paper. Thanks are also due to Prof. Weiping Zhang for presenting the second author with the book \cite{ER93} from Paris. Finally, the authors are very grateful to the referees for useful comments and suggestions.
\end{ack}

\end{document}